\newtheorem{prop}{Proposition}[section]
\newtheorem{theo}[prop]{Theorem}
\newtheorem*{theo*}{Theorem}
\newtheorem{lemm}[prop]{Lemma}
\newtheorem{coro}[prop]{Corollary}
\newtheorem{rema}[prop]{Remark}
\newtheorem*{Ackn}{Acknowledgements}
\theoremstyle{definition}
\newtheorem{defi}[prop]{Definition}
\newcommand{\Rmnum}[1]{\expandafter\@slowromancap\romannumeral #1@}
\newcommand{\RR}{\mathbf{R}}
\newcommand{\cA}{\mathcal A}
\newcommand{\cH}{\mathcal H}
\newcommand{\cO}{\mathcal O}
\def\fX{\mathfrak{X}}
\newcommand{\bR}{\mathbb{R}}
\newcommand{\bH}{\mathbb{H}}
\newcommand{\bN}{\mathbb{N}}
\DeclareMathOperator{\trace}{trace}
\DeclareMathOperator{\im}{Im}
\DeclareMathOperator{\diam}{diam}
\DeclareMathOperator{\spa}{span}
\DeclareMathOperator{\Vol}{Vol}
\DeclareMathOperator{\Area}{Area}
\DeclareMathOperator{\inj}{inj}
\DeclareMathOperator{\Ric}{Ric}
\DeclareMathOperator{\biRic}{biRic}
\DeclareMathOperator{\k-triRic}{k-triRic}
\DeclareMathOperator{\secondfund}{II}
\newcommand{\bangle}[1]{\left\langle #1 \right\rangle}
\title{The structure and rigidity of FBCMC hypersurfaces}
\author{Jia Li}
\address{School of Mathematical Sciences\\
		Xiamen University\\
		361005, Xiamen, P.R. China}
\email{lijiamath@stu.xmu.edu.cn}
\thanks{2020 Mathematics Subject Classification. Primary 53A10,~53C24.}
\thanks{Key words and phrases. One-endedness, Free boundary $\mu$-bubble, Rigidity.}
\begin{document}

\begin{abstract}
We prove that the combination of strict positivity of $k$-tri-Ricci curvature with non-negative $3$-intermediate Ricci curvature forces rigidity of two-sided stable free boundary  minimal hypersurface in a 5-manifold with bounded geometry and weakly convex boundary. This improves the result of Wu \cite{Wuyujie2023} to 5-dimensions and also extends the method of Hong-Yan \cite{Hong-cmc nonexis} to the free boundary case. We give a characterization of one-endedness for weakly stable free boundary constant mean curvature(FBCMC) hypersurfaces, which is an extension of Cheng-Cheung-Zhou \cite{Cheng-Cheung-Zhou2008}. 
\end{abstract}
\maketitle
\section{Introduction}
The classical Bernstein theorem states that a complete minimal graph of codimension $1$ in Euclidean spaces $\mathbb{R}^{n+1}$ is flat when $n\leq 7$, which has been established by the works \cite{Bernstein1927,Fleming1962,De Giorgi1965,Almgren1966,Sim68,BDE} of Bernstein, Fleming, De Giorgi,
Almgren, Simons, Bombieri-De Giorgi-Giusti. As a generalization, the stable Bernstein problem asks whether a complete two-sided stable minimal hypersurface in $\bR^{n+1}$ is a hyperplane for $n\leq 6$. As for $n=2$, do Carmo-Peng \cite{do Carmo-Peng1979}, Fischer-Colbrie and Schoen \cite{FCS80}, Pogorelov \cite{Pogorelov1981} independently proved that a complete, connected, two-sided stable minimal surface in $\bR^3$ must be a plane. Up to now, due to the contributions \cite{Chodosh-Li,CL-anisotropic,Catino-Mastrolia-Roncoroni,Chodosh-Li-Minter-Stryker-5bernstein,Mazet-6Bernstein} of Chodosh-Li, Mastrolia-Roncoroni, Chodosh-Li-Minter-Stryker and Mazet, it has been established that the stable Bernstein problem is true for $n\leq 5$. The only case $n=6$ remains open.  

It is natural to ask whether there exist rigidity results for general ambient manifolds? Hence, we are led to the following two questions:  

\textbf{Question 1:} For a general Riemannian manifold $N^{n+1}$, under what conditions is a complete two-sided stable minimal hypersurface $M^{n}$ in $N^{n+1}$ totally geodesic?

 When $M$ is closed (compact without boundary), there are some known results below: Assume that $\Ric\geq 0$, then any stable minimal two-sided hypersurface in $N$ is totally geodesic and $\Ric_{N}(\eta,\eta)=0$ along $M$, where $\eta$ denotes the unit outward normal vector field of $M$ \cite{Sim68}.  When $\Ric_{N}>0$, there does not exist a closed stable, two-sided minimal hypersurface.
 
When $M$ is complete with respect to the induced metric and non-compact (without boundary), this question becomes complicated. The theory of complete two-sided stable minimal surfaces in $3$-dimensional manifolds is well-developed.

  If $M^2\hookrightarrow N^{3}$ is a complete two-sided, non-compact stable minimal surface, it holds that under the following conditions:
  \begin{itemize}
      \item If the ambient manifold $N$ has non-negative scalar curvature i.e. $R_{N}\geq 0$, then $(M,g_{M})$ is conformal to either a plane or a cylinder\cite{FCS80}. In the latter case, $M$ must be totally geodesic and $R_{N}=0$, $\Ric_{N}(\eta,\eta)$ along $M$ (c.f. \cite[Proposition C.1]{CCE16}).
      \item When $\Ric_{N}\geq 0$, $M$ is totally geodesic, intrinsically flat, and $\Ric_{N}(\eta,\eta)=0$ along $M$ \cite{SY82}.
      \item When $R_{N}\geq 1$, $M$ must be compact \cite{SY83}(c.f. \cite{GL83}).
  \end{itemize}
  
  Recently, for higher dimensional complete two-sided stable minimal hypersurfaces $M^{n}\to N^{n+1}$, there has been some progress  under suitable curvature conditions. The main idea is to bound the volume growth of the minimal hypersurface.
  
  For $n=3$, Chodosh-Li-Stryker \cite{Chodosh-Li-Stryker-4positive curved} used the method of $\mu$-bubble to give an almost linear volume growth bound for a non-parabolic end, under the non-negativity condition of 2-intermediate Ricci curvature $\Ric^{N}_{2}$ (c.f. Definition \ref{def1}), established a rigidity theorem:
\begin{theo}[\cite{Chodosh-Li-Stryker-4positive curved}]
If $(N^4,g)$ is a complete 4-dimensional Riemannian manifold with weakly bounded geometry, satisfying $$\Ric^{N}_{2}\geq 0\quad and \quad R_{N}\geq\epsilon_{0}>0,$$ where $\epsilon_{0}$ is a positive constant, then any complete two-sided stable minimal immersion $M^3\to N^4$ is totally geodesic and has $\Ric(\eta,\eta)\equiv 0$ along $M$.
\end{theo}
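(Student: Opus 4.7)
The plan is to mirror the strategy O. Chodosh and C. Li developed for the stable Bernstein problem in $\mathbb{R}^4$, taking advantage of the two extra curvature hypotheses $\Ric^N_2 \geq 0$ and $R_N \geq \delta > 0$. The starting point is the two-sided stability inequality
\[
\int_M \bigl(|\secondfund|^2 + \Ric_N(\eta,\eta)\bigr)\,f^2 \;\leq\; \int_M |\nabla^M f|^2
\]
for every compactly supported Lipschitz $f$ on $M^3$. Combining this with the Gauss equation for a two-sided minimal hypersurface,
\[
R_M = R_N - 2\Ric_N(\eta,\eta) - |\secondfund|^2,
\]
and eliminating $\Ric_N(\eta,\eta)$ yields the reformulation
\[
\int_M |\nabla^M f|^2 + \tfrac{1}{2}\int_M R_M f^2 \;\geq\; \tfrac{1}{2}\int_M \bigl(|\secondfund|^2 + R_N\bigr) f^2,
\]
so under $R_N \geq \delta$ the conformal Laplacian on $M^3$ enjoys a uniform, quantitative positive lower bound.

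Next I would use the Chodosh--Li $\mu$-bubble machinery to transport this positivity from the $3$-manifold $M$ to a $2$-dimensional slice. One constructs a stable weighted $\mu$-bubble $\Sigma^2 \subset M^3$ and couples the second variation of the $\mu$-bubble functional with the stability inequality above; this propagates the conformal lower bound onto the induced Gauss curvature of $\Sigma$. The $\Ric^N_2 \geq 0$ hypothesis is decisive here: decomposing $\Ric_N(\eta,\eta) = \sum_{i=1}^3 K_N(\eta,e_i)$ in an orthonormal frame of $T_pM$, the $2$-intermediate Ricci bound furnishes pointwise nonnegativity of each pairwise sum $K_N(\eta,e_i) + K_N(\eta,e_j)$, which is exactly the termwise control needed to ensure that the Gauss curvature of $\Sigma$ is bounded below by a uniform positive constant. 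Bonnet--Myers and Cohn--Vossen then force $\Sigma$ to be a topological sphere of uniformly bounded diameter, and by varying the weight function one can sweep out every compact region of $M$ by such $\mu$-bubbles.

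The main obstacle will be the final rigidity step: upgrading these integrated/conformal lower bounds to the pointwise conclusions $\secondfund \equiv 0$ and $\Ric_N(\eta,\eta) \equiv 0$ on $M$. This is typically handled by tracking the equality cases in the Gauss/stability computation and by a blow-up at any point where $|\secondfund|^2 + \Ric_N(\eta,\eta) > 0$: the weakly bounded geometry assumption on $N$ ensures smooth convergence of the blow-ups to a limiting stable minimal hypersurface in a model ambient space, where a nontrivial $|\secondfund|$ or a strictly positive $\Ric_N(\eta,\eta)$ would produce $\mu$-bubbles with a strict diameter bound incompatible with the noncompactness being propagated through the exhaustion of $M$. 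Making this last limit argument rigorous, and organizing the exhaustion so that the weighted $\mu$-bubble construction survives in the noncompact setting, is the delicate part of the Chodosh--Li--Stryker scheme and the step where I would have to be most careful.
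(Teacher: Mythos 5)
This theorem is quoted in the paper as a cited result of Chodosh--Li--Stryker; the paper itself contains no proof, so your proposal has to be measured against their actual argument. Your first two steps are in the right spirit (stability plus the Schoen--Yau/Gauss-equation rearrangement to extract positivity from $R_N\geq\delta$, then warped $\mu$-bubbles inside $M^3$ to produce $2$-dimensional slices of uniformly bounded diameter), although the division of labor between the hypotheses is slightly off: the uniform positive lower bound fed into the slice comes from $R_N\geq\delta$ through the first eigenfunction of the stability operator used as the warping weight, while $\Ric^N_2\geq 0$ is what controls the $\Ric_M(\nu,\nu)$-type terms in the second variation of the $\mu$-bubble functional (via the Gauss equation) and, separately, guarantees $\Ric_N(\eta,\eta)\geq 0$, since $2\Ric_N(\eta,\eta)$ is a sum of the pairwise sectional sums that $\Ric^N_2$ bounds.

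The genuine gaps are in what you do with the slices. First, you never invoke the curvature estimate $\sup_M|\secondfund|\leq C$, obtained by a blow-up argument using weakly bounded geometry together with the stable Bernstein theorem in $\mathbb{R}^4$; this is where the weakly bounded geometry hypothesis actually enters, and it is indispensable because it gives uniform volume bounds on intrinsic balls of $M$. Without it, diameter bounds on $\mu$-bubble slices do not translate into any volume control, and ``sweeping out $M$ by small spheres'' proves nothing by itself. Second, your closing step --- blowing up at a point where $|\secondfund|^2+\Ric_N(\eta,\eta)>0$ and deriving a contradiction with noncompactness --- does not work as stated: strict positivity at one point is not an obstruction that survives a blow-up, and no contradiction with the exhaustion arises. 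The actual mechanism is integral, not pointwise: the slice diameter bounds (applied on the universal cover to ensure connectedness of the relevant regions) combined with the ball-volume bounds yield almost linear volume growth of each end; plugging logarithmic cutoff functions into the stability inequality then forces $\int_M(|\secondfund|^2+\Ric_N(\eta,\eta))\varphi^2\to 0$, and since $\Ric^N_2\geq 0$ makes the integrand nonnegative, one concludes $\secondfund\equiv 0$ and $\Ric_N(\eta,\eta)\equiv 0$ directly. You would need to replace your final blow-up/rigidity paragraph by this volume-growth-plus-cutoff argument, and insert the curvature estimate step that supports it, to have a complete proof.
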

 For $n=4$, Hong and Yan \cite{Hong-cmc nonexis} made an improvement on the $\mu$-bubbles by choosing  the exponent of the warped function, and introduced a new k-tri-Ricci curvature (c.f. Definition \ref{def2}), which can be used to control curvature terms from the second variation of the warped $\mu$-bubbles, so as to estimate the volume growth of ends. Therefore, they successfully established a rigidity theorem in 5-dimensional Riemannian manifold:
\begin{theo}[\cite{Hong-cmc nonexis}]
If $(N^5,g)$ is a complete 5-dimensional Riemannian manifold with bounded geometry, satisfying $$\Ric_{N}\geq 0,\quad\biRic_{N}\geq0,\quad and \quad \lambda_{\k-triRic_{N}}\geq\epsilon_{0}>0 \quad for \quad k\in[1,2].$$ Then any complete two-sided stable minimal immersion $M^{4}\to N^5$ is totally geodesic and has $\Ric(\eta,\eta)\equiv 0$ along $M$.
\end{theo}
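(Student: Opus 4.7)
The plan is to combine the Simons identity for a minimal hypersurface $M^{4}\hookrightarrow N^{5}$ with the weak stability inequality and a refined Kato estimate, using the three ambient curvature bounds ($\Ric_{N}\geq 0$, $\biRic_{N}>0$, $k$-triRic$_{N}\geq\delta>0$) to absorb the negative Simons terms and conclude $|A|\equiv 0$.

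First, since $M^{4}$ is minimal in $N^{5}$, Simons' identity reads
\[
\tfrac{1}{2}\Delta_{M}|A|^{2}=|\nabla A|^{2}-|A|^{4}+Q(A,\Rm_{N}),
\]
where $Q$ is an ambient term expressible through sectional curvatures of $N$ on planes spanned by the unit normal $\eta$ and tangent vectors. The assumption $k\text{-triRic}_{N}\geq\delta>0$ for $k\in[\tfrac{3}{2},2]$ is tailored precisely so that a weighted average of three such sectional curvatures dominates $Q$; combined with $\Ric_{N}\geq 0$ and $\biRic_{N}>0$ this furnishes a definite positive contribution of order $\delta\,|A|^{2}$. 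A refined Kato inequality for minimal hypersurfaces, $|\nabla A|^{2}\geq\bigl(1+\tfrac{2}{n}\bigr)|\nabla|A||^{2}$ with $n=4$, then promotes Simons' identity to a pointwise inequality for the scalar $|A|$ of the form
\[
|A|\,\Delta_{M}|A|\;\geq\;-c_{1}|A|^{4}+c_{2}\delta\,|A|^{2}-c_{3}|A|^{2}\Ric_{N}(\eta,\eta).
\]

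Second, I would insert a mean-renormalized test function $f=u\phi-\overline{u\phi}$, with $u=|A|^{p}$ for a tuned exponent $p$ and $\phi$ a cutoff from a proper exhaustion of $M$, into the weak stability inequality. Substituting the differential inequality above after integration by parts and letting $\phi$ exhaust $M$ with $|\nabla\phi|\to 0$, the goal is an estimate of the form $\delta\int_{M}|A|^{2p}\leq 0$, which forces $|A|\equiv 0$. Once $|A|\equiv 0$, weak stability itself delivers $\Ric_{N}(\eta,\eta)\equiv 0$ along $M$, yielding the stated rigidity.

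The main obstacle will be accommodating the mean-zero constraint of weak stability: subtracting the mean of $u\phi$ introduces a nonlocal correction whose control on noncompact $M$ is delicate. A natural remedy is to split $M$ into two disjoint regions via a proper distance-type function, test on each region separately, and combine; this is the noncompact analog of subtracting averages region-by-region. Justifying the split requires first establishing that $M$ has only one (or at worst two) non-parabolic ends, via a Cheng--Cheung--Zhou type argument analogous to the one used to prove Theorem 1 of the present paper. A secondary subtlety is tuning the refined Kato exponent and the test power $p$ to exactly match the averaged-curvature exponent $k\in[\tfrac{3}{2},2]$: the non-integer endpoint $k=\tfrac{3}{2}$ signals that an interpolated Kato--Simons estimate, rather than the classical one, will be needed to close the final inequality.
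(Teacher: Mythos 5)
Your proposal takes the classical Simons--Schoen--Simon--Yau route (Simons' identity $+$ refined Kato $+$ stability with $|A|^{p}$ cutoffs), which is not how this theorem is proved. The quoted result of Hong--Yan, and the free-boundary analogues developed in Sections 4--5 of this paper, are proved by a completely different mechanism: stability produces a positive Jacobi function $u$ with $\Delta u=-(|\secondfund|^{2}+\Ric_{N}(\eta,\eta))u$; one then minimizes the warped functional $\cA_{k}$ with weight $u^{k}$, and it is in the \emph{second variation of this $\mu$-bubble functional} that the combination $k\Ric_{N}+\biRic_{e_{i}^{\perp}}$, i.e.\ the $k$-triRic hypothesis, appears (the parameter $k\in[\tfrac32,2]$ is the warping power of $u$, not a Kato-type exponent, so your reading of the endpoint $k=\tfrac32$ as signalling an ``interpolated Kato--Simons estimate'' misidentifies its role). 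The $\mu$-bubble then yields a diameter bound for the separating slices, hence almost-linear volume growth of the ends, which -- together with curvature estimates obtained by blow-up from bounded geometry and the $\RR^{5}$ stable Bernstein theorem -- is played off against the parabolicity/volume-growth dichotomy coming from the stability inequality to force rigidity ($|\secondfund|\equiv 0$, $\Ric_{N}(\eta,\eta)\equiv 0$).

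Beyond being a different route, your argument has a genuine gap at its key step: to conclude $\delta\int_{M}|A|^{2p}\le 0$ you must discard the cutoff error $\int|A|^{2p}|\nabla\varphi|^{2}$ as $\varphi$ exhausts $M$, and this requires a priori volume growth or integrability of $|A|$ that is simply not available -- bounded geometry of $N$ gives no control on $\mathrm{Vol}(B^{M}_{R})$, and the failure of exactly this step is the reason the stable Bernstein problem in dimension $5$ required the $\mu$-bubble/isoperimetric machinery of Chodosh--Li--Minter--Stryker rather than a pure SSY argument. The one-endness argument you invoke (Cheng--Cheung--Zhou type, as in Theorem 1 here) controls the \emph{number} of ends, not their volume growth, so it does not repair this. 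In addition, your claimed pointwise inequality $|A|\Delta_{M}|A|\ge -c_{1}|A|^{4}+c_{2}\delta|A|^{2}-c_{3}|A|^{2}\Ric_{N}(\eta,\eta)$ is not justified: in a general ambient $5$-manifold Simons' identity contains contracted $\Rm_{N}$ and $\nabla\Rm_{N}$ terms that the hypotheses $\Ric_{N}\ge 0$, $\biRic_{N}>0$, $\k-triRic_{N}\ge\delta$ do not sign-control in that form (the ambient-derivative terms are only of size $C|A|$, which ruins the inequality where $|A|$ is small), so the crucial ``$+\,c_{2}\delta|A|^{2}$'' contribution cannot be extracted this way.
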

More generally, we are interested in whether similar results hold for manifolds with boundary. This leads us to the following question:

\textbf{Question 2}: When $N^{n+1}$ is a Riemannian manifold with boundary, under what conditions is a complete two-sided stable, free boundary immersed minimal hypersurface $M^{n} \hookrightarrow N^{n+1}$ totally geodesic?

To solve this problem, Wu \cite{Wuyujie2023} extended the methods of Chodosh-Li-Stryker to free boundary minimal hypersurface in general Riemannian manifolds with boundary. When $N$ satisfies some curvature assumptions and has weakly convex boundary (the second fundamental form of $\partial N$ is nonnegative), gave a rigidity characterization of complete two-sided stable minimal free boundary hypersurfaces:
\begin{theo}[\cite{Wuyujie2023}]
 Let $(N^4,\partial N)$ be a complete Riemannian manifold with weakly convex boundary. If $M$ has weakly bounded geometry and satisfies $R_{N}\geq 2$, $\Ric_{2}\geq 0$, then any complete two-sided stable free boundary minimal hypersurface $(M,\partial M)\hookrightarrow (N,\partial N)$ is totally geodesic, $\Ric_{N}(\eta,\eta)\equiv 0$ along $M$ and $A_{\partial M}(\eta,\eta)=0$ along $\partial M$.
\end{theo}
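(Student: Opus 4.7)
The plan is to adapt the stability-plus-Gauss-equation argument of Chodosh--Li--Stryker \cite{Chodosh-Li-Stryker-4positive curved} to the free boundary setting, handling the extra boundary term through the weak convexity of $\partial N$. I begin from the free boundary stability inequality: for any compactly supported Lipschitz $f$ on $M$,
\[
\int_M |\nabla f|^2 \geq \int_M (|\secondfund|^2 + \Ric_N(\eta,\eta)) f^2 + \int_{\partial M} A(\eta,\eta) f^2.
\]
Weak convexity of $\partial N$ forces $A(\eta,\eta) \geq 0$ along $\partial M$, so the boundary integral is nonnegative and only helps the right-hand side. The Gauss equation for $M^3 \hookrightarrow N^4$ with $H=0$ reads $R_M = R_N - 2\Ric_N(\eta,\eta) - |\secondfund|^2$, which rearranges to $|\secondfund|^2 + \Ric_N(\eta,\eta) = \tfrac{1}{2}(R_N - R_M + |\secondfund|^2)$. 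Substituting and using $R_N \geq 2$,
\[
\int_M \bigl(|\nabla f|^2 + \tfrac{1}{2} R_M f^2\bigr) \geq \int_M \bigl(1 + \tfrac{1}{2}|\secondfund|^2\bigr) f^2 + \int_{\partial M} A(\eta,\eta) f^2.
\]

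The role of the $\Ric_2 \geq 0$ assumption is to enable a reduction to a two-dimensional problem via a free boundary $\mu$-bubble, following the closed-case strategy. Concretely, I would construct a stable free boundary $\mu$-bubble $\Sigma^2 \subset M^3$ meeting $\partial M$ orthogonally, for a carefully chosen weight function, so that interior and boundary stability of $\Sigma$ transfer the pointwise bound above into an intrinsic scalar-curvature inequality $R_\Sigma + 2\kappa_{\partial \Sigma} > 0$ in the relative Gauss--Bonnet sense. Since $\Sigma$ is a compact surface with boundary, relative Gauss--Bonnet then forces $\chi(\Sigma) > 0$, so each component of $\Sigma$ is a disk. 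Combined with the original inequality, this topological constraint propagates back as a saturation requirement on all nonnegative terms in the inequality displayed above.

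The conclusion then follows by forcing equality: every nonnegative term on the right-hand side of the stability inequality must vanish, giving $|\secondfund|^2 \equiv 0$ (so $M$ is totally geodesic), whence the Gauss equation yields $\Ric_N(\eta,\eta) \equiv 0$ along $M$, and $A(\eta,\eta) \equiv 0$ along $\partial M$. The main obstacle will be the free boundary $\mu$-bubble construction itself: one needs existence and regularity (including behavior at the corner $\partial M \cap \partial N$) together with the correct boundary-inclusive stability formula, with the weak convexity of $\partial N$ contributing the right sign in the reduced two-dimensional inequality. A secondary difficulty is that $M$ is a priori noncompact, so the argument must be run on an exhaustion by compactly supported Lipschitz cutoffs with Neumann-compatible behavior on $\partial M$, then passed to the limit using the uniform geometric bounds.
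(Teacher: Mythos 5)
This statement is quoted in the paper from \cite{Wuyujie2023} without proof, so the comparison is with Wu's argument (parallel to Chodosh--Li--Stryker), which the present paper's Sections 3--5 generalize. Your proposal starts correctly (free boundary stability, $A(\eta,\eta)\ge 0$ from weak convexity, the Schoen--Yau rewriting $|\secondfund|^2+\Ric_N(\eta,\eta)=\tfrac12(R_N-R_M+|\secondfund|^2)$), but it has a genuine gap at the decisive step: the claim that the disk topology (or diameter bound) of an auxiliary free boundary $\mu$-bubble $\Sigma^2\subset M^3$ ``propagates back as a saturation requirement'' on the stability inequality is not an argument. Knowing that warped $\mu$-bubbles are disks of bounded diameter gives information about slices of the ends of $M$; it does not force equality in an integral inequality over $M$, and nothing in your outline produces the vanishing of $|\secondfund|$, $\Ric_N(\eta,\eta)$ and $A(\eta,\eta)$.

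The actual mechanism is different and cannot be bypassed. First one proves $M$ is \emph{compact}: curvature estimates (stability plus weakly bounded geometry), then free boundary warped $\mu$-bubble diameter estimates inside $M$ --- where $\Ric_2\ge 0$ controls the $\Ric_M(\nu,\nu)$ term in the bubble's second variation and $R_N\ge 2$ supplies the positivity needed for the warped functional --- give almost linear volume growth of each end; on the other hand, the stability inequality with $R_N\ge 2$ forces a strictly larger volume growth (via nonparabolicity/spectral positivity of the ends), a contradiction unless there are no ends. Only after compactness is established does one take $f\equiv 1$ (admissible because $M$ is compact and the hypersurface is stable, not merely weakly stable), obtaining
\begin{equation*}
0\;\ge\;\int_M\bigl(|\secondfund|^2+\Ric_N(\eta,\eta)\bigr)+\int_{\partial M}A(\eta,\eta),
\end{equation*}
and since $\Ric_2\ge 0$ implies $\Ric_N(\eta,\eta)\ge 0$ and weak convexity gives $A(\eta,\eta)\ge 0$, every term vanishes, yielding the three rigidity conclusions. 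Your plan treats noncompactness as a ``secondary difficulty'' to be handled by cutoffs, but constants are not admissible test functions on a noncompact $M$, and no exhaustion argument yields the displayed inequality without first ruling out noncompactness; this is precisely the hard part of Wu's proof and of the arguments in this paper, and it is missing from your proposal.
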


 The purpose of this article is to extend the results in \cite{Wuyujie2023} to $5$-manifolds with boundary and to provide a positive answer to Question 2 under certain curvature assumptions.%Therefore, our main work is to establish the rigidity of free boundary minimal hypersurface in $5$-manifolds. %We establish a significant result analogous to \cite{Wuyujie2023}, presenting a free boundary version of \cite{Hong-cmc nonexis} as follows:
 \begin{theo}\label{theo-1}
     Let $(N^{5},\partial N)$ be a complete Riemannian manifold with weakly convex boundary and weakly bounded geometry, satisfying  $$\Ric^{N}_{3}\geq 0,\quad \lambda_{\k-triRic_{N}}\geq 2\epsilon_{0}>0 \quad for\quad k\in [1,2].$$
Then any complete stable two-sided free boundary minimal hypersurface $(M^4,\partial M)\hookrightarrow(N^5,\partial N)$ is totally geodesic, $\Ric_{N}(\eta,\eta)=0$ along $M$ and $A_{\partial N}(\eta,\eta)=0$ along $\partial M$.
 \end{theo}
 If $N$ has bounded geometry, the result of Theorem \ref{theo-1} remains valid with $\Ric^{N}_{3}\geq 0$ replaced by $\Ric_{N}\geq 0$ plus $\biRic_N\geq 0$ (c.f. \cite[Lemma 2.3, Lemma 2.4]{Hong-cmc nonexis}) .
\begin{theo}\label{main-theo1.5}
Let $(N^{5},\partial N)$ be a complete Riemannian manifold with weakly convex boundary and bounded geometry, satisfying  $$\Ric_{N}\geq 0,\quad \biRic_{N}\geq 0, \quad\lambda_{\k-triRic_{N}}\geq 2\epsilon_{0}>0\quad for\quad k\in[1,2].$$
Then any complete stable two-sided free boundary minimal hypersurface $(M^4,\partial M)\hookrightarrow(N^5,\partial N)$ is totally geodesic, $\Ric_{N}(\eta,\eta)=0$ along $M$ and $A_{\partial N}(\eta,\eta)=0$ along $\partial M$.
\end{theo}
As a corollary, we have the following non-existence result:
\begin{coro}
    There is no complete two-sided stable free boundary minimal hypersurface in a compact manifold $(N^5,\partial N)$ with positive sectional curvature and weakly convex boundary.
\end{coro}
Let us illustrate the $\mu$-bubble method to obtain the almost linear volume growth of non-parabolic ends. Since if the end is parabolic, the standard equation $\Delta u+(|\secondfund|^2+\Ric_{N}(\eta,\eta))u=0$ implies the vanishing of $\secondfund$ and $\Ric_{N}(\eta,\eta)$. First, we prove that $M$ has at most one non-parabolic end $E$. Next, we construct a free boundary warped $\mu$-bubble in each $M_{j}$, whose diameter has a uniform upper bound. Then combined with the results of Wu \cite[Lemma 3.3]{Wuyujie2023}, we conclude that the volume of $M_{j}$ has a uniform upper bound. Hence we obtain the almost linear volume growth of an end. Finally, by applying a suitable test function to the stability inequality, we infer that $M$ is totally geodesic and that $\Ric_{N}(\eta,\eta)$ vanishes identically.

Given that the non-parabolicity of ends is crucial to the structure of free boundary hypersurfaces, it is natural to ask under what conditions $M$ has only one end. Cao-Shen-Zhu \cite{Cao-Shen-Zhu1997} proved that for $n\geq 3$, any complete two-sided stable minimal hypersurface in $\bR^{n+1}$ must have one end. In 2002, Li-Wang \cite{Li-Wang2002} showed that a complete, two-sided minimal hypersurface $M^{n}$ in $\bR^{n+1}$ with finite index must have finite many ends. When $N^{n+1}$ is a complete Riemannian manifold with non-negative sectional curvature, Li-Wang \cite{Li-Wang2004} proved that if $M^{n}$ is a complete two-sided non-compact, stable minimal hypersurface properly immersed in a complete manifold $N^{n+1}$,  then $M^n$ must either have only one end or be totally geodesic and a product of a compact manifold $P$ with non-negative sectional curvature and $\bR$. More generally, Cheng-Cheung-Zhou \cite{Cheng-Cheung-Zhou2008} gave the characterization of  one end for complete non-compact constant mean curvature hypersurfaces (without boundary).

Our another work is to establish the one-endedness theorem for weakly stable free boundary constant mean curvature hypersurface in general manifolds with boundary.
\begin{theo}
Let $N$ be a complete Riemannian manifold with bounded geometry and weakly convex boundary. Assume that $(M^{n},\partial M)$ is a complete non-compact weakly stable FBCMC hypersurface properly immersed in $(N^{n+1},\partial N)$. If there is a constant $\delta_{0}>0$, such that $\lambda_{\Ric_{N}}+\frac{1}{n}H^2\geq \delta_{0}$ and for any $p\in M$, $X\in T_{p}M$, $|X|=1$, $\biRic_{N}(X,\eta)\geq\frac{(n-5)}{4}H^2$, then $M$ has only one end, which must be non-parabolic.
\end{theo}

\subsection{Notation}
Throughout the paper, we adopt the following notation:
\begin{itemize}
    \item $\overline{\nabla},\nabla,\nabla^{\Sigma} $ denote the Levi-Civita connection on $N$, $M$, $\Sigma$ respectively.
    \item $\Delta$, $\Delta_{M_{0}}$, $\Delta_{\Sigma}$ denote the Laplacian of $M$, $M_{0}$, $\Sigma$ respectively.
    \item $\Ric_{N}$, $\biRic_{N}$, $\k-triRic_{N}$, $R_{N}$ denote the Ricci curvature, bi-Ricci curvature, k-tri-Ricci curvature, scalar curvature of $N$ respectively.
    \item Let $\lambda_{\Ric_{N}}(p):=\inf\{\Ric_{N}(e_{i},e_{i}):p\in N, e_{i}\in T_{p}N, i=1,2,\cdots, n+1\}$.
    \item Let $\lambda_{\biRic_{N}}(p):=\inf\{\biRic_{N}(e_{1},e_{2}):p\in N, e_{1},e_{2}\in T_{p}N\}$. 
    \item Let $\lambda_{\k-triRic_{N}}(p):=\inf\{\k-triRic_{N}(e_{i},e_{j},e_{l})\},$
where the infimum is taken over all local orthonormal basis $\{e_{i}\}^{n+1}_{i=1}$ at $p$, and over all admissible index triples $(e_{i},e_{j},e_{l})$, $i\neq j\neq l$.
    \item $\secondfund$ denotes the second fundamental form of immersion $M\to N$, defined by $\secondfund(X,Y)=\bangle{\overline{\nabla}_{X}\eta, Y}$, where $\eta$ is the outward unit normal vector field of $M$ in $N$, $X,Y\in \fX(M)$.
    \item $\secondfund_{\Sigma}$ denotes the second fundamental form of $\Sigma$ in $M$ with respect to the outward unit normal vector field $\nu_{\Sigma}$.
    \item $A_{\partial N}$($A_{\partial M}$), $A_{\partial M_{0}}$ denote the second fundamental form of $\partial N\to N$($\partial M\to M$) with respect to $\nu_{\partial N}$($\nu_{\partial M}$), the second fundamental form of $\partial M_{0}\to M_{0}$ with respect to $\nu_{\partial M_{0}}$ respectively. $\nu_{\partial N}$($\nu_{\partial M}$) is the outward unit normal vector field of $\partial N$($\partial M$) in $N$($M$), $\nu_{\partial M_{0}}$ is the outward unit unit normal vector field of $\partial M_{0}$ in $M_{0}$. 
    \item Let $H=\trace(\secondfund)$ be the mean curvature of $M$ and $H_{\Sigma}$ be the mean curvature of $\Sigma$.
\item $B^{\bR^{L}}_{q}(\rho):=\{x\in\bR^{L}:|x-q|<\rho\}$ denotes the Euclidean  ball with radius $\rho$ centered at $q$.
\item $B_{p}(R):=\{x\in M: d(p,x)<R\}$ denotes the geodesic ball of radius $R$ centered at $p$ in $M$.
\end{itemize}
\subsection{Organization}
The rest of this paper is organized as follows. In Section 2, we introduce some basic preliminaries; In Section 3, we prove that $M$ must have one end under some curvature conditions. In Section 4, we construct free boundary warped $\mu$-bubbles, whose diameter has an uniform upper bound; In Section 5, we get the almost linear growth of non-parabolic ends and establish a rigidity theorem.

\begin{Ackn}
The author would like to express his deepest gratitude to his advisor, Professor Chao Xia, for introducing this problem and for his continuous guidance, support, and invaluable insights throughout this research. 
\end{Ackn}

\section{PRELIMINARIES}%下面这部分是我的新内容,2024年9月30日
First, we recall the definition of free boundary constant mean curvature (FBCMC) hypersurfaces. 
Let $F$: $(M^{n},\partial M)\hookrightarrow (N^{n+1},\partial N)$ 
be an immersed FBCMC hypersurface, which means that the mean curvature $H$ of $M$ is constant, and $M$ meets $\partial N$ orthogonally along $\partial M$.  Throughout the paper, we say the immersion $F$ is proper (or $M$ properly immersed in $N$), if $F(M)\cap \partial N=F(\partial M)$. Hence, we are led to the stability of FBCMC hypersurface.

A complete FBCMC hypersurface $M$ properly immersed in $N$ is called weakly stable if
\begin{align}\label{sta-1}
\int_{M}|\nabla f|^2\geq \int_{M}(|\secondfund|^2+\Ric_{N}(\eta,\eta))f^2+\int_{\partial M}A_{\partial N}(\eta,\eta)f^2,
\end{align}
for any compactly supported smooth function $f:M\to \mathbb{R}$ satisfying
$\int_{M}f=0$, where $\secondfund_{M}$ is the second fundamental form of the immersion $M \hookrightarrow N$, $\eta$ is a unit normal vector field on $M$; $A_{\partial N}$ is the second fundamental form of the inclusion $\iota$: $\partial N \hookrightarrow N$ with respect to $\eta$.% , and we always denote by $\nu_{\partial M}$ the outward unit normal vector of $\partial M$ in $M$. 

Moreover, we assume $M$ is two-sided if it is minimal, and we say $M$ is (strongly) stable if we do not assume  $\int_{M}f=0$. In particular, if the mean curvature $H=0$ and the stability inequality (\ref{sta-1}) holds for any $f\in C^{\infty}_{0}(M)$, we call $M$ a stable free boundary minimal hypersurface.

%We begin by recalling key concepts and defining the notation that will be used throughout this work. Consider a complete noncompact FBCMC hypersurface $(M,\partial M)$ that is properly immersed in $(N,\partial N)$, where $N$ has bounded geometry and weakly convex boundary.
%\begin{defi}
%For an ambient $(n+1)$-dimensional Riemannian manifold $(N,\partial N)$, we say $N$ has weakly convex boundary if the second fundamental form $A$ of $\partial N$ with respect to $N$ is positive semi-definite.
%\end{defi}
Next, we recall some definitions related to curvature that will be essential for our subsequent analysis. In order to estimate the diameter of stable minimal submanifolds, Shen-Ye introduced the concept of bi-Ricci curvature in \cite{Shen-Ye}.

\begin{defi}[\cite{Shen-Ye}]
Let $(N^{n+1},g)$ be a Riemannian manifold and $\{e_i\}^{n+1}_{i=1}$ be an orthonormal basis for $T_{p}N$, $p\in N$. The $bi$-$Ricci$  curvature is defined by
\[
\biRic_{N}(e_1,e_2):=\sum_{i=2}^{n+1}\overline{R}(e_1,e_i,e_i,e_1)+\sum_{j=3}^{n+1}\overline{R}(e_2,e_j,e_j,e_2),
\]
where $\overline{R}$ is the curvature tensor of $N$.

% For simplicity, we denote $\lambda_{\biRic_{N}}(p):=\inf\{\biRic_{N}(e_{1},e_{2}):p\in N, e_{1},e_{2}\in T_{p}N\}$. Similarly, we denote $\lambda_{\Ric_{N}}(p):=\inf\{\Ric_{N}(e_{i},e_{i}):p\in N, e_{i}\in T_{p}N, i=1,2,\cdots, n+1\}$.
\end{defi}

The following curvature condition lies between non-negative Ricci curvature and non-negative sectional curvature, which will be used  to estimate the Ricci curvature of $M$ later (c.f. \cite{Chodosh-Li-Stryker-4positive curved}).
\begin{defi}\label{def1}
Let $(N^{n+1},g)$, $n\geq 2$, be a Riemannian manifold. The ($n$-1)-intermediate Ricci curvature of $N$ in the direction $\tau$, is defined as $$\Ric^{N}_{n-1}(e_{i},\tau)=\sum\limits_{j\in\{1,\cdots,n\}\backslash i}\overline{R}(e_{j},\tau,\tau,e_{j}),$$where $\{e_{1},\cdots,e_{n},\tau\}$ is any orthonormal basis of $T_{p}N$.
\end{defi}
%下面这个定义是2025年新添加的
Hong and Yan \cite{Hong-cmc nonexis} introduced a new curvature called the $k$-weighted triRicci curvature to study the rigidity of CMC hypersurfaces in $5$-and $6$-dimensional manifolds with bounded geometry.
\begin{defi}[\cite{Hong-cmc nonexis}]\label{def2}
Let $(N^{n+1},g)$, $n\geq 2$, be a Riemannian manifold. Given a local orthonormal basis $\{e_{i}\}^{n+1}_{i=1}$ on $T_{p}M$ and $k>0$, we define $k$-weighted triRicci curvature as
$$\k-triRic_{N}(e_{i},e_{j},e_{l})=k\Ric_{N}(e_{i},e_{i})+\biRic_{e_{i}^{\perp}}(e_{j},e_{l}),$$
where $\biRic_{e_{i}^{\perp}}(e_{j},e_{l})$ is given by
$$\biRic_{e_{i}^{\perp}}(e_{j},e_{l}):=\sum\limits_{m\neq i}\bar{R}(e_{m},e_{j},e_{j},e_{m})+\sum\limits_{m\neq i}\bar{R}(e_{m},e_{l},e_{l},e_{m})-\bar{R}(e_{j},e_{l},e_{l},e_{j}),  j\neq l,$$
i.e., we take the partial contraction on the subspace $e^{\perp}_{i}:=\spa\{e_{l}\}_{l\neq i}$ normal to $e_{i}$.
% And we define $$\lambda_{k}=\inf\{\k-triRic_{N}(e_{i},e_{j},e_{l})\},$$
% where the infimum is taken over all local orthonormal basis $\{e_{i}\}^{n+1}_{i=1}$ at $p$, and over all admissible index triples $(e_{i},e_{j},e_{l})$, $i\neq j\neq l$.
\end{defi}
There is a weaker version about boundary convexity, which is useful to deal with boundary term in calculations.
\begin{defi}
Let $(N^{n+1}, \partial N)$ be a complete manifold with boundary, assume that $A$ is the second fundamental form of $\partial N \hookrightarrow N$, we say that $A_2\geq 0$ if for any $p\in \partial N$ and orthonormal vectors $e_1, e_2$ on the tangent plane $T_{p}\partial N$ of $\partial N$, we have $A(e_1,e_1)+A(e_2,e_2)\geq 0$.
\end{defi}
 We recall the definition of bounded geometry for a manifold with boundary, which can be found in \cite{Schick2001}.
\begin{defi}[Definition 2.2, \cite{Schick2001}]
Let $N$ be a complete Riemannian manifold with boundary $\partial N$. We say $N$ has bounded geometry if the following holds:
\begin{itemize}
  \item Normal collar: there exists a $r_{0}>0$ so that the geodesic collar:
  $$C=[0,r_{0})\times\partial N\longrightarrow N: (t,x)\mapsto\exp_{x}(t\nu_{x})$$ is a diffeomorphism onto its image, where $\nu_{x}$ is the unit inward normal vector at $x\in\partial N$. Equip $C$ with the induced Riemannian metric and set $C_{\frac{1}{3}}:=\im [0,\frac{r_{0}}{3})\times\partial N$.
  \item Positive injectivity radius of $\partial N$: $r_{\inj}(\partial N)>0$.
  \item Injectivity radius of $N$: There is $r_{1}>0$ such that for $x\in N\setminus C_{\frac{1}{3}}$, the exponential map is a diffeomorphism on $B(0,r_{1})\subset T_{x}N$.
  \item Curvature bounds: For every $n\in \bN$ there is a $C_{n}$ so that $|\overline{\nabla}^{i}\overline{R}|\leq C_{n}$ and $|\hat{\nabla}^{i}A_{\partial N}|\leq C_{n}$, for any $0\leq i\leq n$.  $\overline{\nabla}$ and $\hat{\nabla}$ are the Levi-Civita connection of $N$ and $\partial N$ respectively.
\end{itemize}
\end{defi}

Similarly, there is also a weaker version about bounded geometry \cite{Wuyujie2023}. %In order to deal with complete stable minimal hypersurface in general 4-manifold, Chodosh-Li-Stryker \cite{Chodosh-Li-Stryker-4positive curved} introduced the definition of weakly bounded geometry, and then Wu \cite{Wuyujie2023} gave the free boundary version of weakly bounded geometry, both of them can be applied to establish the curvature estimates for minimal hypersurfaces. For the convenience of reader, we give the precise definition \cite{Wuyujie2023}.
\begin{defi}[Definition 2.9, \cite{Wuyujie2023}]
We say a complete Riemannian manifold with boundary $(N^{n+1},\partial N,g)$ has weakly bounded geometry (up to the boundary) at scale $Q$, if for any point $x\in N$, there is a $\alpha\in(0,1)$ such that:
\begin{itemize}
  \item There is a pointed $C^{2,\alpha}$ local diffeomorphism $\Psi:(B_{Q^{-1}}(a),a)\cap\bH_{+}\rightarrow(U,x)\subset N$, for some point $a\in \bR^{n}$, here $\bH_{+}$ is the upper half space in $\bR^{n}$;
  \item If $\partial N\cap U\neq \emptyset$, then $\Psi^{-1}(\partial N\cap U)\subset \partial\bH_{+}$.
\end{itemize}
Furthermore, the map $\Psi$ satisfies
\begin{itemize}
\item $e^{-2Q}g_{0}\leq\Psi^{*}g\leq e^{2Q}g_{0}$ as two forms, with $g_{0}$ the standard Euclidean metric;
\item $\|\partial_{k}\Psi^{*}g_{ij}\|_{C^{\alpha}}\leq Q$, for $i, j, k=1, 2, \cdots, n+1$.
\end{itemize}
\end{defi}
Following the method of Wu \cite[Lemma 3.1]{Wuyujie2023}(c.f. \cite[Lemma 2.4]{Chodosh-Li-Stryker-4positive curved}), a standard blow-up argument establishes curvature estimates for stable FBCMC hypersurfaces in 5-dimensional manifolds.
\begin{lemm}\label{lem15}
Let $(N^{5},\partial N, g)$ be a complete Riemannian manifold with weakly bounded geometry, and $(M^4, \partial M)\hookrightarrow (N^{5}, \partial N)$ be a complete stable immersed free boundary CMC hypersurface, then $$\sup\limits_{q\in M}{|\secondfund(q)|}\leq C<\infty,$$ for some constant $C=C(N,g)$ independent of $M$.
\end{lemm}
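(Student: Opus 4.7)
The plan is a standard blow-up argument, arguing by contradiction and reducing to the Bernstein-type rigidity theorems for complete stable minimal hypersurfaces in $\bR^{5}$ and $\bR^{5}_{+}$. Suppose $\sup_{q\in M}|\secondfund(q)|=+\infty$; then by a point-picking lemma in the spirit of Ecker--Huisken one extracts a sequence $p_{k}\in M$ with $\lambda_{k}:=|\secondfund(p_{k})|\to\infty$, and radii $r_{k}>0$ with $r_{k}\lambda_{k}\to\infty$, such that $|\secondfund|\leq 2\lambda_{k}$ on the intrinsic ball $B^{M}_{p_{k}}(r_{k})$. In the free boundary case one modulates $r_k$ by the distance to $\partial N$ so that boundary Schauder estimates can be applied later.

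Next I would pull the ambient metric back to $g_{k}:=\lambda_{k}^{2}g$ and recenter at $p_{k}$. Because $(N,g)$ has weakly bounded geometry, the given $C^{2,\alpha}$ coordinate charts of diameter $Q^{-1}$ become charts of diameter $\lambda_{k}Q^{-1}\to\infty$ in the rescaled metrics, with coefficients tending in $C^{1,\alpha}$ to the Euclidean ones; moreover the second fundamental form $A$ of $\partial N$ in $g_{k}$ satisfies $|A|_{g_{k}}\leq\|A\|_{C^{0}}/\lambda_{k}\to 0$. Hence $(N,g_{k},p_{k})$ converges in the pointed $C^{1,\alpha}$ sense either to flat $\bR^{5}$, when $\lambda_{k}\dist(p_{k},\partial N)\to\infty$, or to the flat half-space $\bR^{5}_{+}$ with totally geodesic boundary, when $\lambda_{k}\dist(p_{k},\partial N)$ stays bounded. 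At the same time, the rescaled immersions $(M,g_{k}|_{M},p_{k})$ are free boundary CMC with mean curvature $H/\lambda_{k}\to 0$, with $|\secondfund|_{g_{k}}\leq 2$ on the rescaled balls and $|\secondfund(p_{k})|_{g_{k}}=1$. Interior and free boundary Schauder estimates then produce a smooth subsequential limit $M_{\infty}$, which is either a complete two-sided stable minimal immersion in $\bR^{5}$ (interior case) or a complete two-sided stable free boundary minimal immersion in $\bR^{5}_{+}$ (boundary case), with $|\secondfund(p_{\infty})|=1$.

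In the interior case, the stable Bernstein theorem of Chodosh--Li--Minter--Stryker forces $M_{\infty}$ to be an affine hyperplane, contradicting $|\secondfund(p_{\infty})|=1$. In the boundary case, reflection of $M_{\infty}$ across the totally geodesic boundary $\partial\bR^{5}_{+}$ (which $M_{\infty}$ meets orthogonally) produces a complete two-sided stable minimal immersion in $\bR^{5}$, and the same theorem yields flatness, the same contradiction. The main obstacle I anticipate is verifying that stability genuinely passes both through the rescaling and through the reflection: the subtle point is that the boundary Jacobi term $\int_{\partial M}A(\eta,\eta)f^{2}$ in the stability inequality scales by $\lambda_{k}^{-1}$ and disappears in the limit, so that the limiting inequality is the bulk stability of a minimal hypersurface, and symmetric extension of compactly supported test functions across $\partial\bR^{5}_{+}$ converts free boundary stability of $M_{\infty}$ into honest stability of the reflected hypersurface in $\bR^{5}$. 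Everything else --- the compactness step, the vanishing of the mean curvature in the limit, the smoothness of $M_{\infty}$ up to the boundary, and the extraction of a graphical piece around $p_k$ --- is routine once the ambient convergence to the flat (half-)space has been set up.
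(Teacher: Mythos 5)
Your blow-up strategy is exactly the argument the paper has in mind: the paper gives no written proof of this lemma, only the remark that it follows from a standard blow-up argument built on Wu's free boundary framework and the Chodosh--Li--Minter--Stryker Bernstein theorem in $\bR^5$, and your interior/boundary dichotomy, the convergence of the rescaled ambient spaces to $\bR^5$ or to the flat half-space, and the transfer of stability through reflection across the totally geodesic boundary are precisely the intended ingredients.

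There is, however, one genuine mismatch with the statement as written. You start the contradiction from a single hypersurface $M$ and a sequence $p_k\in M$, and you use crucially that the rescaled mean curvature $H/\lambda_k\to 0$ because $H$ is a fixed constant; this only yields $\sup_M|\secondfund|<\infty$ for each $M$ separately, i.e.\ a constant depending on $M$, whereas the lemma claims $C=C(N,g)$ uniform over all stable FBCMC hypersurfaces (note such uniformity would in particular bound $H_M$, since $|H_M|\le 2|\secondfund|$ pointwise). To get uniformity you must blow up a sequence of hypersurfaces $M_k$ with $|\secondfund_{M_k}(q_k)|=\lambda_k\to\infty$, and then the point-picking only gives $|H_{M_k}|/\lambda_k\in[0,2]$; it need not tend to zero, so the limit may be a complete noncompact two-sided stable hypersurface of \emph{nonzero} constant mean curvature in $\bR^5$ (or a free boundary one in the half-space), to which the minimal Bernstein theorem does not apply. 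Closing that case needs an extra input, e.g.\ the nonexistence of complete noncompact stable CMC hypersurfaces with $H\neq 0$ in flat (half-)space in the spirit of Hong--Yan, or else the conclusion should be restated with $C$ depending also on $H_M$ (equivalently a per-$M$ bound), which is in fact all the paper uses afterwards. One further small point: to see that the reflected hypersurface is stable you should decompose an arbitrary test function on the double into even and odd parts (cross terms cancel by symmetry, and each part restricts to an admissible competitor in the free boundary stability inequality); the symmetric extension of test functions from the half, as you phrase it, only treats the even part.
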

\begin{proof}
    The proof is basically same as \cite{Wuyujie2023} by using the standard point-picking argument. If the conclusion is not true, there exists a sequence of compact sets $K_{i}\subset M_{i}\hookrightarrow N$ and a sequence of $p_{i}\in K_{i}\setminus \partial_{1}K_{i}$, where $\partial_{1}K_{i}$ denotes the part of $\partial K_{i}$ contained in $\partial M$, such that $$|\secondfund_{i}(p_{i})|\min\{1,d_{M_{i}}(p_{i},\partial_{1}K_{i})\}\to \infty$$
    when $i\to\infty$. Next, we dilate the metric by $|\secondfund_{M_{i}}(p_{i})|$ and the rest of proof follows as in \cite{Wuyujie2023}. The only difference is that we are dealing with FBCMC hypersurface with nonzero mean curvature. However, notice the dilation by $|\secondfund_{M_{i}}|$ of FBCMC hypersurface will be a complete free boundary minimal hypersurface. Hence,
    using the reflection principle, we can reduce it to a complete minimal hypersurface in $\bR^{5}$, which contradicts with the stable Bernstein theorem in $\bR^{5}$ proved by Chodosh-Li-Minter-Stryker \cite{Chodosh-Li-Minter-Stryker-5bernstein}.
\end{proof}
To establish the almost linear volume growth for non-parabolic ends, we shall employ the following volume control lemma, whose proof can be found in \cite{Wuyujie2023}.
\begin{lemm}[Lemma 3.3, \cite{Wuyujie2023}]\label{lem16}
Let $(N^{n+1},\partial N, g)$ be a complete Riemannian manifold with weakly bounded geometry at scale $Q$, and $(M, \partial M)\hookrightarrow(N,\partial N)$ be a complete immersed submanifold with bounded second fundamental form, then for any $r>0$, there is a constant $D=D(r,Q)$ such that the volume of balls radius $r$ around any point in $M$ is bounded by $D$.

\end{lemm}
% \begin{lemm}[Lemma 3.3, \cite{Wuyujie2023}]\label{lem16}
% Let $(N^{n+1},\partial N, g)$ be a complete Riemannian manifold with weakly bounded geometry at scale $Q$, and $(M, \partial M)\hookrightarrow(N,\partial N)$ be a complete immersed submanifold with bounded second fundamental form, then the following is true:
% \begin{itemize}
%   \item [$\bullet$]There is a constant $0<C_{\sigma}<\infty$ such that for any $p\in M$, the maximum number of disjoint balls of radius $\sigma$ centered around points in $B^{M}_{4\sigma}$ is bounded by $C_{\sigma}$.
%   \item [$\bullet$]For any $r>0$, there is a constant $D=D(r,C_{\sigma})$ such that the volume of balls radius $r$ around any point in $M$ is bounded by $D$.
% \end{itemize}
% \end{lemm}
 %A manifold $M$ is parabolic if it does not admit a symmetric, positive Green's function. Otherwise, it is nonparabolic. (我觉得在下面的论述中没有任何作用，故拿出来) 

 Finally, we recall some basic results about non-parabolicity of ends. For a compact subset $K$ of $M$, an end $E$ with respect to $K$ is defined as an unbounded component of the complement $M\backslash K$. Due to the presence of the boundary $\partial M$, there are two types of ends: one that intersects $\partial M$, called a boundary end, and the other that does not, called an interior end. An interior end is parabolic if it does not admit a positive harmonic function $f$ satisfying
$$f\big|_{\partial_{1}E}=1,  f\big|_{\mathring{E}}<1.$$
For the boundary end, we can similarly define non-parabolicity. %Choose a large compact subset $K$ containing $p\in\partial M$ such that the complement $M\setminus K$ has many disjoint components which are called ends. Boundary ends are those sharing boundary portion with $\partial M$ while interior ends only have boundary lying in $\partial K$.
\begin{defi}[Lemma 2.1, \cite{Wuyujie2023}]
Let $E$ be a boundary end of $M$ and hence $\partial E$ consists of two parts: $\partial_{1}E
:=\partial E\cap\partial K$ and $\partial_{0}E:=\partial E\cap\partial M$. $E$ is called paraboic if there is no positive harmonic function $f\in C^{2,\alpha}(E)$, $0<\alpha<1$, so that
$$f\big|_{\partial_{1}E}=1,\partial_{\nu}f\big|_{\partial_{0}E}=0, f\big|_{\mathring{E}}<1.$$
Otherwise, we say that $E$ is non-parabolic and such $f$ is called a barrier function of $E$.
\end{defi}
\begin{rema}
To obtain the enough regularity ,we can achieve by perturbing the intersection angle between $\partial_{0}E$ and $\partial_{1}E$ to lie in $(0,\frac{\pi}{8})$ \cite{Wuyujie2023}. In the subsequent discussions, we always assume that $\partial_{0}E$ meets $\partial_{1}E$ with a constant angle in $(0,\frac{\pi}{8})$.
\end{rema}

\begin{lemm}[Lemma 4.8, \cite{Wuyujie2023}]\label{para-lem}
Let $(M^{n},\partial M)$ be a complete Riemannian manifold. A boundary end $E$ is nonparabolic if and only if the sequence of positive harmonic functions defined on $E_{i}=E\cap B_{p}(R_{i})$ with $R_{i}\to\infty$ satisfying
$$f_{i}\big|_{\partial_{1}E}=1,\partial_{\nu}f_{i}\big|_{\partial_{0}E}=0, f_{i}\big|_{\partial_{1}B_{p}(R_{i})}=0,$$
converges uniformly and locally on compact subsets of $E$ to a barrier function $f$. Moreover, the barrier function is minimal among all harmonic functions satisfying mixed boundary conditions, and has finite Dirichlet energy.
 If the end is parabolic, then $f_{i}\to 1$ in $C^{2,\alpha}_{loc}(E)$ and $\lim\limits_{i\to\infty}\int_{E}|\nabla f_{i}|^2=0.$
\end{lemm}
\section{one-endedness of weakly stable FBCMC hypersurfaces}
In this section, we mainly discuss the number of non-parabolic ends of weakly stable FBCMC hypersurfaces. 

We first establish a locally monotonicity formula for free boundary constant varifold around a boundary point, which is an extension of Guang-Li-Zhou's monotonicity formula for free boundary stationary varifold \cite{Guang-Li-Zhou2018}.
\begin{theo}%(\cite[Theorem 3.4]{Guang-Li-Zhou2018})
\label{free-vol}
   Assume that $N$ is an embedded $n$-dimensional submanifold in $\bR^{m}$ with the second fundamental form $A_{N}$ bounded by some constant $\Lambda_{1}>0$. Suppose that $X\subset N$ is a closed embedded $n$-dimensional submanifold and $V$ is a $k$-varifold with free boundary on $X$, and the generalized mean curvature $H$ of $V$ is constant. Then there exist constants $\Lambda$, $r_{2}>0$, for any point $q\in X$ and $0<\rho<\frac{1}{2}r_{2}$, we have
   $$e^{\Lambda\rho}\rho^{-k}\Vol(V\cap B^{\bR^{m}}_{\rho}(q))$$
   is non-decreasing in $\rho$, where $\Lambda=\Lambda(k,\Lambda_{1},r_{2})$.
\end{theo}
\begin{rema}
This is a slight modification of stationary case of Guang-Li-Zhou's paper, the only difference is that $H$ is constant, one can check the monotonicity formula using same method. 
\end{rema}
If the ambient manifold $N$ has  bounded geometry, the monotonicity formula for FBCMC hypersurfaces implies that every end of $M$ has infinite volume.
\begin{prop}\label{prop-3.3}
    Let $(N^{n+1},\partial N)$ be a complete Riemannian manifold with bounded geometry and weakly convex boundary. Let $(M,\partial M)$ be a complete non-compact FBCMC hypersurface, then each end of $M$ has infinite volume. 
\end{prop}
% \begin{coro}
% Let $F: (M^{n},\partial M)\to (N^{n+1},\partial N)$ be a complete non-compact two-sided free boundary immersed hypersurface with mean curvature vector field $H$ bounded in norm. Assume that $N$ has bounded geometry. Then every end of $M$ has infinite volume.
% \end{coro}
\begin{proof}
Choose a large compact $K$ containing $p\in\partial M$, then $M$ has at least one end since it is non-compact. Considering a non-compact manifold with boundary may contain two types of ends, we will discuss interior ends and boundary ends respectively. First, suppose $M$ has an interior end denoted by $E_{1}$. The proof basically follows the method of Cheng \cite[Proposition 2.1]{Cheng-Cheung-Zhou2008}. Since $M$ is complete and non-compact, there exists a geodesic ray $\gamma:[0,\infty)\to M$ contained in $E_1$. Fix a small $\mu_{0}$ and consider a sequence of points $p_{j}=\gamma(4j\mu_{0})$ such that the geodesic balls $B_{p_{j}}(\mu_{0})$ of radius $\mu_{0}$ around $p_{j}$ satisfy $B_{p_{j}}(\mu_{0})\cap B_{p_{k}}(\mu_{0})=\varnothing$ whenever $k\neq j$. Combined with the volume estimate theorem \cite[Theorem 3]{FR}, we know every volume of geodesic balls has a uniform lower bound, so
$$\Vol(E_1)\geq\sum_{j=0}^{n}\Vol(B_{p_{j}}(\mu_{0}))\geq(n+1)C(\mu_{0}),$$
where $C(\mu_{0})$ is a positive constant depending only on $\mu_{0}$. Sice above inequality holds for any $n\in \mathbb{N}$, we conclude that $E_1$ has infinite volume.

Next, we deal with the boundary end in a similar way. Denote a non-compact boundary component of $M$ by $E_{2}$, different to the interior end case, we choose a sequence of disjoint points $q_{j}\in E_{2}\cap\partial M$ and a small $\mu_{0}>0$ such that the Euclidean balls $B^{\bR^{m}}_{q_{j}}(\mu_{0})$ are disjoint. By this way, we notice that every extrinsic ball is enclosed by two parts: $\partial B^{\bR^{m}}_{q_{j}}(\mu_{0})\cap E_{2}$ and $B^{\bR^{m}}_{q_{j}}(\mu_{0})\cap\partial M$. According  to Theorem \ref{free-vol}, since $\Vol(M\cap B^{\bR^{m}}_{q_{j}}(\mu_{0}))$ is non-decreasing for $\mu_{0}\leq\frac{1}{2}r_{2}$, there exists a $\bar C(\mu_{0})>0$ such that $\Vol(M\cap B^{\bR^{m}}_{q_{j}}(\mu_{0}))\geq \bar C(\mu_{0})$. Then
$$\Vol(E_2)\geq\sum_{j=0}^{n}\Vol(M\cap B^{\bR^{m}}_{q_{j}}(\mu_{0}))\geq(n+1)\bar C(\mu_{0}),$$
 we conclude that $E_2$ has infinite volume.
\end{proof}
\begin{prop}\label{prop20}
  Let $(N^{n+1},\partial N)$ be a complete Riemannian manifold with bounded geometry and weakly convex boundary. Let $(M^n,\partial M)$ be a complete non-compact weakly stable FBCMC hypersurface properly immersed in $N$. If $\lambda_{\Ric_{N}}+\frac{1}{n}H^2\geq\delta_{0}$, then each end of $M$ must be non-parabolic.
\end{prop}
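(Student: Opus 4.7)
The plan is a contradiction argument. Suppose some end $E$ of $M$ is parabolic; I will construct a sequence of compactly supported Lipschitz mean-zero test functions that violate the weak stability inequality. First I simplify stability: Cauchy--Schwarz on the principal curvatures gives $|\secondfund|^2 \ge H^2/n$, and since $\eta$ is tangent to $\partial N$ along $\partial M$ (by the free boundary condition), the weak convexity of $\partial N$ yields $A(\eta,\eta) \ge 0$ on $\partial M$. Substituting both into weak stability and dropping the nonnegative boundary term reduces it to the Poincar\'e-type inequality
\[
\int_M |\nabla f|^2 \ge \Lambda \int_M f^2, \qquad \Lambda := \inf_N \Ric_N + \tfrac{H^2}{n} > 0,
\]
for every compactly supported Lipschitz $f$ with $\int_M f = 0$.

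Next I exploit parabolicity of $E$ twice. The parabolic characterization lemma supplies Lipschitz compactly supported $v_j$ on $\bar E$ with $v_j|_{\partial_1 E}=1$, Neumann on $\partial_0 E$, and $\int_E|\nabla v_j|^2 \to 0$, so their Rayleigh quotients $\int|\nabla v_j|^2/\int v_j^2$ tend to zero. Extending each $v_j$ by $1$ on $M \setminus E$ (with a standard far-field cutoff when $M$ has more than one end, producing only negligible extra gradient energy) and renormalizing a suitable $v_{j_0}$ produces a fixed Lipschitz compactly supported $\eta$ on $M$ with $\int_M\eta = 1$ and $\int|\nabla\eta|^2 < \Lambda \int\eta^2$. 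The same vanishing-capacity property of the parabolic end also supplies, for each $i$, Lipschitz compactly supported $u_i$ on $\bar E$ that is identically $1$ on the fixed compact set $\supp\eta \cup \partial_1 E$, satisfies Neumann on $\partial_0 E$, and has $\int_E|\nabla u_i|^2 \to 0$. Because $u_i \equiv 1$ on $\supp\eta$, one obtains the crucial orthogonalities
\[
\int \nabla u_i \cdot \nabla \eta = 0, \qquad \int u_i \eta = \int \eta = 1.
\]
Setting $c_i := \int_M u_i$ and $f_i := u_i - c_i \eta$, the function $f_i$ is compactly supported, Lipschitz, and has $\int_M f_i = 0$, hence admissible in the Poincar\'e inequality above.

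Expanding, using these orthogonalities and the elementary bound $\int u_i^2 \le \int u_i = c_i$ (valid because $0 \le u_i \le 1$), one computes
\[
\int|\nabla f_i|^2 - \Lambda\int f_i^2 = \int|\nabla u_i|^2 - \Lambda\int u_i^2 + 2\Lambda c_i - D c_i^2, \qquad D := \Lambda\int\eta^2 - \int|\nabla\eta|^2 > 0.
\]
Since $u_i \to 1$ almost everywhere on $M$ and $M$ has infinite volume (as a complete noncompact hypersurface with uniformly bounded second fundamental form in an ambient of bounded geometry, disjoint normal balls of uniform size provide a positive lower volume bound), $c_i \to \infty$. The negative quadratic $-Dc_i^2$ therefore dominates the at-most-linear remainder $\int|\nabla u_i|^2 - \Lambda \int u_i^2 + 2\Lambda c_i$, which is bounded above by $\int|\nabla u_i|^2 + 2\Lambda c_i$. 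Hence the right-hand side becomes strictly negative for $i$ large, contradicting the Poincar\'e inequality of Step~1 and proving that $E$ cannot be parabolic.

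The main technical obstacle is engineering the capacity potentials $u_i$ so that each is identically $1$ on the entire support of the fixed bump $\eta$; this orthogonality is what kills the otherwise-dangerous cross terms $c_i\int\nabla u_i\cdot\nabla\eta$ and $c_i\int u_i\eta$, which would compete with the leading $c_i^2$ asymptotics. A secondary technical issue, handled by a standard far-field cutoff introducing only subleading gradient contribution, is the compactly-supported extension of $v_j$ and $u_i$ from $\bar E$ to all of $M$ when $M$ has multiple ends.
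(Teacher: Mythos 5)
Your route is genuinely different from the paper's: you keep the volume constraint and build explicit mean-zero competitors $f_i=u_i-c_i\eta$ for the weak stability inequality, whereas the paper first uses weak stability to get \emph{unconstrained} stability outside a compact set, deduces a Poincar\'e inequality $\int_E f^2\le C\int_E|\nabla f|^2$ on each end, plugs in cutoffs times the harmonic barriers, and concludes that a parabolic end would have finite volume, contradicting the infinite volume of ends. Your reduction to $\Lambda=\inf_N\Ric_N+\tfrac1nH^2>0$ (via $|\secondfund|^2\ge H^2/n$ and $A(\eta,\eta)\ge0$ from weak convexity plus the free boundary condition) and the orthogonality trick $u_i\equiv1$ on $\supp\eta$ are sound in principle, and your argument treats interior and boundary ends uniformly, which is a nice feature.

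However, the key quantitative inputs are not justified as written. The contradiction needs $c_i=\int_M u_i\to\infty$ and $D=\Lambda\int\eta^2-\int|\nabla\eta|^2>0$. For the first you argue ``$u_i\to1$ a.e.\ on $M$ and $M$ has infinite volume,'' but $u_i$ is supported in $\overline{E}$ together with a fixed compact region (on the companion ends it is a fixed cutoff), so it does not tend to $1$ off $E$; what you actually need is that the allegedly parabolic end $E$ \emph{itself} has infinite volume. For the second, the far-field cutoff energy on the other ends is \emph{not} negligible: if a companion end is nonparabolic, any compactly supported extension equal to $1$ on the core has Dirichlet energy bounded below by a positive capacity; it is bounded (a fixed cutoff suffices), but then $\int|\nabla\eta|^2<\Lambda\int\eta^2$ must again be extracted from $\int_E v_j^2\to\infty$, i.e.\ from $\Vol(E)=\infty$. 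You never establish infinite volume of $E$: your justification is applied to $M$ rather than to the end, and it invokes a uniform bound on $|\secondfund|$ that is not available in general dimension $n$ (the curvature estimate in the paper holds only for stable hypersurfaces in $5$-manifolds). The paper instead cites the Cheng--Cheung--Zhou-type fact that, by properness and the bounded geometry of $N$, every end of such a CMC hypersurface has infinite volume; with that fact inserted (and the small repair of upgrading the barriers to be $\equiv1$ on a prescribed compact set, e.g.\ $\min(1,f_i/\min_K f_i)$, since the lemma only gives $f_i=1$ on $\partial_1E$ and $f_i\to1$ locally), your construction does close, but as written these load-bearing steps are either false or unproved.
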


\begin{proof}
The proof is the free boundary version of Li-Wang \cite[Theorem 3]{Li-Wang2002}. Since $M$ is a weakly stable FBCMC hypersurface, it is stable outside a compact domain $\Omega \subset M$. Since $N$ has bounded geometry, it follows from Proposition \ref{prop-3.3} that each end of $M$ has infinite volume. According to \cite[Proposition 2.2]{Cheng-Cheung-Zhou2008}, every interior end is non-parabolic. It suffices to show that each boundary end $E$ of $M$ with respect to any compact set $K(\Omega\subset K)$ is non-parabolic. 

 By the definition of stability, for any compactly supported function $f\in C^{\infty}_{0}(E)$, we have
\begin{align*}
  \int_{E}|\nabla f|^2 & \geq\int_{E}(\Ric_{N}(\eta,\eta)+|\secondfund|^2)f^2+\int_{\partial M\cap E}A_{\partial N}(\eta,\eta)f^2 \\
  %&=\int_{E}(\Ric_{N}(\eta,\eta)+|\Phi|^2+\frac{1}{n}H^2)f^2+\int_{\partial M\capE}A(\eta,\eta)f^2\\
  &\geq (\lambda_{\Ric_{N}}+\frac{1}{n}H^2)\int_{E}f^2\\
  &\geq\delta_{0}\int_{E}f^2,
\end{align*}
where we use the fact that 
$M$ has weakly convex boundary and $\lambda_{\Ric_{N}}+\frac{1}{n}H^2\geq\delta_{0}$, %then we can obtain a Sobolev type inequality on $E$:
so 
\begin{align}\label{Sobo-1}
\int_{E}f^2\leq C_{0}\int_{E}|\nabla f|^2.
\end{align}
Choose $p\in\partial M$ and a fixed $R_{0}>0$ such that $B_{p}(R_{0})$ contains $\Omega$. Construct an increasing sequence $\{R_{i}\}_{i=0}^{\infty}$ such that $E=\bigcup\limits_{i=0}^{\infty}E_{R_{i}}$. Set $E_{R_{i}}=E\cap B_{p}(R_{i})$, where $B_{p}(R_{i})$ is the geodesic ball of radius $R_{i}$ in $M$ centered at $p$. Suppose $f_{i}$ is a positive harmonic function on $E\cap B_{R_{i}}$ satisfying
$$f_{i}\big|_{\partial_{1}E}=1,\quad\partial_{\nu}f_{i}\big|_{\partial_{0}E}=0,\quad f_{i}\big|_{\partial B_{R_{i}}\cap E}=0,$$
where $\partial_{0}E$ denotes the part of $\partial E$ contained in $\partial M$, and $\partial_{1}E$ denotes the part of $\partial E$ contained in the interior of $M$.
 For every $R_{i}>R_{0}>0$, let $\varphi$ be a cut-off function such that
 \[
 \varphi\big|_{E_{R_{i}}\backslash E_{0}}=1,\quad \varphi\big|_{\partial_{1}E}=0, \quad|\nabla\varphi|\leq C_{1}.
 \]
 Plug $\varphi f_{i}$ into inequality (\ref{Sobo-1}), we obtain 
 \begin{align*}
 \int_{E_{R_{i}}}(\varphi f_{i})^2&\leq C_{0}\int_{E_{R_{i}}}|\nabla(\varphi f_{i})|^2\\
 &=C_{0}(\int_{E_{R_{i}}}|\nabla\varphi|^2f_{i}^2+2\int_{E_{R_{i}}} \varphi f_{i}\bangle{\nabla\varphi,\nabla f_{i}}+\varphi^2|\nabla f_{i}|^2)\\
%&=C(\int_{E_{R_{i}}}|\nabla\varphi|^2f_{i}^2+\frac{1}{2}\bangle{\nabla(\varphi^2),\nabla(f_{i}^2)}+\varphi^2|\nabla f_{i}|^2)\\
 &=C_{0}(\int_{E_{R_{i}}}|\nabla\varphi|^2f_{i}^2-\frac{1}{2}\int_{E_{R_{i}}}\Delta f_{i}^2\cdot\varphi^2+\int_{\partial_{0}E\cap B_{R_{i}}}\frac{\partial f_{i}^2}{\partial \nu}\varphi^2+\int_{\partial_{1}E\cap B_{R_{i}}}\frac{\partial f_{i}^2}{\partial \nu_{1}}\varphi^2\\
 &\quad+\int_{\partial B_{R_{i}}\cap E}\frac{\partial f_{i}^2}{\partial \nu_{2}}\varphi^2+\int_{E_{R_{i}}}\varphi^2|\nabla f_{i}|^2)\\
&=C_{0}\int_{E_{R_{i}}}|\nabla\varphi|^2f_{i}^2,
\end{align*}
where $\nu_{1}$ and $\nu_{2}$ are the outward unit normal vector fields (relative to $E$) on $\partial_{1}E\cap B_{R_{i}}$ and on $\partial B_{R_{i}}\cap E$, respectively. We used $\Delta f_{i}^2=2|\nabla f_{i}|^2+2 f_{i}\Delta f_{i}$ in the last equality.

Combined with the construction $\varphi$, for a fixed $R_{l}$ satisfying $R_{0}<R_{l}<R_{i}$, set $C_{2}=C_{0}C_{1}$, we have
$$\int_{E_{R_{l}\backslash E_{R_{0}}}}f_{i}^2\leq C_{2}\int_{E_{R_{0}}}f_{i}^2.$$
If $E$ is parabolic, Lemma \ref {para-lem} implies that the limit function $f$ must be identically $1$. Letting $R\to\infty$, we obtain
$$(\Vol_{E}(R_{l})-\Vol_{E}(R_{0}))\leq C_{2}\Vol_{E}(R_{0}),$$
where $\Vol_{E}(R_{k})$ denotes the volume of the set $E_{R_{k}}$. Since $R_{i}>R_{0}$ is arbitrary, this implies that $E$ has finite volume, which contradicts that every end has infinite volume, hence $E$ must be non-parabolic.
\end{proof}

Next, we derive a lower bound for the Ricci curvature of FBCMC hypersurfaces.
\begin{lemm}
Let $(M^{n},\partial{M})\hookrightarrow(N^{n+1},\partial{N})$ be an immersed FBCMC hypersurface. Then for any unit tangent vector field $X\in \fX(TM)$, the following inequality holds:
\[
 \Ric_{M}(X,X)\geq \Ric^{N}_{n-1}(X,X)-\frac{n-1}{n}|\secondfund|^2+\frac{1}{n}H^2+\frac{(n-2)}{n}H\secondfund(X,X).
 \]
 In particular, if $N$ has $\Ric^{N}_{n-1}\geq0$, then
 \[
 \Ric_{M}(X,X)\geq -\frac{n-1}{n}|\secondfund|^2+\frac{1}{n}H^2+\frac{(n-2)}{n}H\secondfund(X,X).
 \]
\end{lemm}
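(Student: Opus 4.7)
The approach is a direct Gauss equation computation, followed by a sharp pointwise bound on the traceless part of the second fundamental form.

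First, I fix $p\in M$ and choose an orthonormal basis $\{e_1,\ldots,e_{n-1},e_n=X\}$ of $T_pM$, writing $h_{ij}=\secondfund(e_i,e_j)$. The Gauss equation gives
$$R_M(e_i,X,X,e_i) = R_N(e_i,X,X,e_i) + h_{ii}h_{nn} - h_{in}^2,\quad 1\le i\le n-1.$$
Summing over $i$ and using $\sum_{i=1}^{n-1}h_{ii}=H-h_{nn}$, I obtain
$$\Ric_M(X,X) = \sum_{i=1}^{n-1}R_N(e_i,X,X,e_i) + H\,\secondfund(X,X) - |\secondfund X|^2,$$
where I denote $|\secondfund X|^2 := \sum_{j=1}^{n}h_{jn}^2$. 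Appending the unit normal $\eta$ to complete an orthonormal basis of $T_pN$ and choosing the omitted direction in Definition \ref{def1} to be $\eta$, the curvature sum on the right is identified with $\Ric^N_{n-1}(X,X)$.

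The only nontrivial step is to bound $|\secondfund X|^2$ from above. Decomposing $\secondfund=\Phi+\tfrac{H}{n}\Id$ with $\Phi$ traceless, a direct expansion yields
$$|\secondfund X|^2 = |\Phi X|^2 + \tfrac{2}{n}H\,\secondfund(X,X) - \tfrac{1}{n^2}H^2.$$
The key pointwise inequality is $|\Phi X|^2 \le \tfrac{n-1}{n}|\Phi|^2$. To prove it, I diagonalize $\Phi$ with eigenvalues $\mu_1,\ldots,\mu_n$ summing to zero; then $|\Phi X|^2\le\max_i\mu_i^2$, and the identity $\mu_i=-\sum_{j\ne i}\mu_j$ together with Cauchy--Schwarz gives $\mu_i^2\le(n-1)(|\Phi|^2-\mu_i^2)$, that is, $\mu_i^2\le\tfrac{n-1}{n}|\Phi|^2$.

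Finally, substituting $|\Phi|^2 = |\secondfund|^2-\tfrac{1}{n}H^2$ and combining the two bounds, the $\tfrac{2}{n}H\,\secondfund(X,X)$ correction combines with the outside $H\,\secondfund(X,X)$ to produce $\tfrac{n-2}{n}H\,\secondfund(X,X)$, while the $H^2$ coefficients collapse to $+\tfrac{1}{n}H^2$, yielding the stated inequality. The ``in particular'' assertion is then immediate from $\Ric^N_{n-1}\ge 0$. The only care required is in bookkeeping the $1/n$ factors and in identifying the tangential partial sum of sectional curvatures with $\Ric^N_{n-1}(X,X)$; I do not anticipate any serious obstacle.
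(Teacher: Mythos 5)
Your proof is correct and follows essentially the same route as the paper's: the Gauss equation traced in the $X$-direction, followed by a Cauchy--Schwarz estimate on the second fundamental form that exploits the trace constraint, with the partial curvature sum identified with $\Ric^{N}_{n-1}(X,X)$ exactly as in the paper's convention. The only difference is bookkeeping: you route the algebra through the traceless part $\Phi=\secondfund-\frac{H}{n}\Id$ and the sharp eigenvalue bound $|\Phi X|^2\le\frac{n-1}{n}|\Phi|^2$, whereas the paper applies Cauchy--Schwarz directly to the entries of $\secondfund$; the two computations are equivalent and give the same inequality.
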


\begin{proof}
Choose an orthonormal basis $\{e_{k}\}^{n+1}_{k=1}$ at a fixed $p\in M$ such that $X=e_1$ and $e_{n+1}$ is normal to $M$. By the Gauss equation,
\[
\overline{R}(e_i,e_j,e_j,e_i)=R(e_i,e_j,e_j,e_i)+\secondfund^{2}(e_i,e_j)-\secondfund(e_i,e_i)\secondfund(e_j,e_j),
\]
Fix $j=1$, summing about $i\in\{1, 2, 3, \cdots, n\}$, then
\begin{align*}
  \sum_{i=1}^{n}\overline{R}(e_i,e_1,e_1,e_i) & =\sum_{i=1}^{n}R(e_i,e_1,e_1,e_i)+\sum_{i=1}^{n}\secondfund^{2}(e_1,e_i)-\sum_{i=1}^{n}\secondfund(e_1,e_1)\secondfund(e_i,e_i) \\
    & =\Ric_{M}(e_1,e_1)+\sum_{i=1}^{n}\secondfund^{2}(e_1,e_i)-\sum_{i=1}^{n}\secondfund(e_1,e_1)\secondfund(e_i,e_i).\\
\end{align*}
Let $\Ric^{N}_{n-1}(e_1,e_1):=\sum_{i=1}^{n}\overline{R}(e_i,e_1,e_1,e_i)$. Therefore,
\begin{align}\label{Ric-eq1}
\Ric_{M}(e_1,e_1)=\Ric^{N}_{n-1}(e_1,e_1)+ \sum_{i=1}^{n}\secondfund(e_1,e_1)\secondfund(e_i,e_i)-\sum_{i=1}^{n}\secondfund^2(e_1,e_i).
\end{align}
We can estimate $|\secondfund|^2$ as follows:
\begin{align*}
  |\secondfund|^2 =\sum_{i,j=1}^{n}\secondfund^2(e_{i},e_{j})&\geq \secondfund^2(e_1,e_1)+2\sum_{i=2}^{n}\secondfund^2(e_1,e_i)+\sum_{i=2}^{n}\secondfund^2(e_i,e_i)\\
  &\geq \secondfund^2(e_1,e_1)+2\sum_{i=2}^{n}\secondfund^2(e_1,e_j)+\frac{1}{n-1}(\sum_{i=2}^{n}\secondfund(e_i,e_i))^2\\
 % &=\secondfund(e_1,e_1)^2+2\sum_{i=2}^{n}\secondfund(e_1,e_j)^2+\frac{1}{n-1}(H-\secondfund(e_1,e_1))^2 \\
%  &=\secondfund(e_1,e_1)^2+2\sum_{i=2}^{n}\secondfund(e_1,e_j)^2+\frac{1}{n-1}(H^2-2H\secondfund(e_1,e_1)+\secondfund(e_1,e_1)^2)\\
  &\geq\frac{n}{n-1}\sum_{i=1}^{n}\secondfund^2(e_1,e_i)+\frac{1}{n-1}H^2-\frac{2}{n-1}\secondfund(e_1,e_1)H\\
 % &=\frac{n}{n-1}(\sum_{i=1}^{n}\secondfund(e_1,e_i)^2-H\secondfund(e_1,e_1))+\frac{1}{n-1}H^2+(\frac{n}{n-1}-\frac{2}{n-1})\secondfund(e_1,e_1)H\\
  &=\frac{n}{n-1}(\sum_{i=1}^{n}\secondfund^2(e_1,e_i)-H\secondfund(e_1,e_1))+\frac{1}{n-1}H^2+(\frac{n-2}{n-1}\secondfund(e_1,e_1)H).\\
\end{align*}
So we also have
\[
\sum_{i=1}^{n}\secondfund^2(e_1,e_i)-H\secondfund(e_1,e_1)\leq \frac{n-1}{n}|\secondfund|^2-\frac{1}{n}H^2-\frac{(n-2)}{n}H\secondfund(e_1,e_1).
\]
Combining with the equality (\ref{Ric-eq1}), we conclude that
\[
\Ric_{M}(e_1,e_1)\geq \Ric^{N}_{n-1}(e_1,e_1)-\frac{n-1}{n}|\secondfund|^2+\frac{1}{n}H^2+\frac{(n-2)}{n}\secondfund(e_1,e_1)H.
\]
\end{proof}
%In particular, if $u\in C^{1}(M)$, we have
%\[
%\Ric_{M}(\nabla u,\nabla u)\geq \Ric^{N}_{n-1}(\nabla u,\nabla u) -\frac{n-1}{n}|\secondfund|^2|\nabla u|^2+\frac{1}{n}H^2|\nabla u|^2+\frac{(n-2)}{n}H\secondfund(\nabla u,\nabla u)|\nabla u|^2.
%\]
%上面的这些与定理的证明无关

%下面的这个代数引理是为了证明下面定理使用的一个简化
Before giving the Schoen-Yau type inequality, we recall an algebra inequality that has been established in \cite{Cheng-Cheung-Zhou2008}.
%\begin{lemm}[Proposition 3.1,\cite{Cheng-Cheung-Zhou2008}]
 % Let $A=(a_{ij})$ be an $n\times n$ real symmetric matrix with trace $\tr(A)=H$. Let $B=(b_{ij})=A-\frac{1}{n}HI$, $|B|^2=\sum\limits_{i,j=1}^{n}b^{2}_{ij}$, where $I$ is the identity matrix. Then the following equality holds:
 % $$\frac{1}{n}|B|^2+\frac{(n-1)}{n^2}H^2+\frac{(n-2)}{n^2}a_{11}H^2\geq \frac{(5-n)}{4}H^2.$$
%\end{lemm}
%\begin{proof}
%First, it's obvious that $\sum_{i=1}^{n}b_{ii}=0$. We can obtain
%$$b_{11}^{2}=(\sum_{i=2}^{n}b_{ii})^2\leq (n-1)\sum_{i=2}^{n}b_{ii}^{2}.$$
%By Cauchy-Schwarz inequality, we have
%\begin{align*}
 % |B|^2 &=\sum_{i,j=1}^{n}b_{ij}^{2}\geq b_{11}^{2}+\sum_{i=2}^{n}b_{ii}^2+2\sum_{i=2}^{n}b_{1i}^2\\
  %& \geq b_{11}^2+\frac{1}{n-1}(\sum_{i=2}^{n}b_{ii})^2+2\sum_{i=2}^{n}b_{1i}^2\\
 % &\geq \frac{n}{n-1}(b_{11}^2+\sum_{i=2}^{n}b_{1i}^2).
%\end{align*}
%Since $b_{ii}=a_{ii}-\frac{1}{n}H$, $i=1,\ldots, n$ and $b_{ij}=a_{ij}$, $i\neq j$, $i, j =1,\ldots, n$, we have
%\begin{align*}
  % & \frac{1}{n}|B|^2+\frac{(n+1)}{n^2}H^2+\frac{(n-2)}{n}a_{11}H\\
  % & =\frac{1}{n}|B|^2+\frac{(n+1)}{n^2}H^2+\frac{(n-2)}{n}(b_{11}+\frac{1}{n}H)H\\
  % & =\frac{1}{n}|B|^2+\frac{2n-1}{n^2}H^2+\frac{(n-2)}{n}b_{11}H\\
   %&\geq \frac{1}{n}|B|^2+\frac{(2n-1)}{n^2}H^2-\frac{(n-2)}{n}\sqrt{\frac{n-1}{n}}|B|H\\
   %& =(\frac{|B|}{\sqrt{n}}-\frac{(n-2)}{2n}\sqrt{n-1}H)^2-\frac{(n-2)^2(n-1)}{4n^2}H^2+\frac{(2n-1)}{n^2}H^2\\
   %&\geq -\frac{(n-2)^2(n-1)}{4n^2}H^2+\frac{(2n-1)}{n^2}H^2\\
   %&= \frac{(5-n)}{4}H^2.
%\end{align*}
%\end{proof}
%Applying above lemma to the hypersurfaces, we obtain the propositions below.
    \begin{prop}[Proposition 3.2, \cite{Cheng-Cheung-Zhou2008}]\label{prop-3.4}
  Let $N^{n+1}$ be an $(n+1)$-dimensional manifold and $M$ be a hypersurface in $N$ with mean curvature $H$(not necessarily constant). Then, for any local orthonormal frame $\{e_{i}\}$ of $M$, $i=1,\ldots, n$,
  $$\frac{1}{n}|\Phi|^2+\frac{(n-1)}{n^2}H^2+\frac{(n-2)}{n^2}\secondfund(e_1,e_1)H^2\geq \frac{(5-n)}{4}H^2.$$
\end{prop}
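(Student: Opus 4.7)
The plan is to view this proposition as a direct geometric translation of the preceding purely algebraic lemma. Fix a point $p\in M$ and a local orthonormal frame $\{e_i\}_{i=1}^{n}$ of $TM$ near $p$, and form the symmetric $n\times n$ matrix
$$A = (a_{ij}), \qquad a_{ij} := \secondfund(e_i, e_j).$$
Since $\{e_i\}$ is orthonormal, the trace of $A$ is exactly the mean curvature $H$ of $M$ (this being the definition used throughout the paper), so the hypothesis of the algebra lemma, namely $\tr(A)=H$, is automatically met.

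Next I would identify the pieces. Set $B := A - \tfrac{1}{n}HI$; this is precisely the matrix in the same frame of the traceless part $\Phi = \secondfund - \tfrac{1}{n}HI$ of the second fundamental form. Because we are working in an orthonormal basis, the Hilbert--Schmidt norm of the symmetric bilinear form $\Phi$ equals the Frobenius norm of its coefficient matrix, giving
$$|B|^2 \;=\; \sum_{i,j=1}^{n} b_{ij}^2 \;=\; |\Phi|^2.$$
In addition, $a_{11} = \secondfund(e_1,e_1)$ by construction.

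Finally, I would plug these identifications into the conclusion of the algebra lemma
$$\frac{1}{n}|B|^2+\frac{(n-1)}{n^2}H^2+\frac{(n-2)}{n^2}a_{11}H^2 \;\geq\; \frac{(5-n)}{4}H^2,$$
which immediately yields
$$\frac{1}{n}|\Phi|^2+\frac{(n-1)}{n^2}H^2+\frac{(n-2)}{n^2}\secondfund(e_1,e_1)H^2 \;\geq\; \frac{(5-n)}{4}H^2,$$
as asserted. There is no genuine obstacle: the proposition is essentially a corollary of the algebra lemma, and the only point worth checking is the frame-independence of the quantity on the left-hand side once the frame is adapted so that $e_1$ is the designated direction. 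The mild subtlety is simply that the lemma is applied pointwise; its hypothesis holds at every $p\in M$ regardless of whether $H$ is locally constant, so the proposition is valid even when the mean curvature is not assumed constant, matching the stated generality.
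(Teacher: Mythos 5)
Your proposal is correct and coincides with the paper's treatment: the paper likewise obtains the proposition by applying the preceding algebraic lemma pointwise to the matrix $a_{ij}=\secondfund(e_i,e_j)$ in an orthonormal frame, with $B$ the matrix of $\Phi$ and $\tr(A)=H$. Nothing further is needed.
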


By applying Proposition \ref{prop-3.4}, it is possible to derive a Schoen-Yau type inequality that incorporates curvature conditions.%The following Theorem \ref{theo1} extends the result of Wu \cite[Theorem 5.2]{Wuyujie2023} to general dimensions and generalizes the \cite[Lemma 3.2]{Cheng-Cheung-Zhou2008} to the FBCMC case.

\begin{theo}\label{theo1}
 Let $(N^{n+1}, \partial N)$ be a complete manifold, and $(M^n, \partial M)$ be an immersed complete weakly stable FBCMC hypersurface. Given a harmonic function $u$ on $M$ with Neumann boundary condition i.e. $\bangle{\nabla u,\nu_{\partial M}}=0$, where $\nu_{\partial M}$ is the outward unit normal vector field of $\partial M$, then the following inequality holds:
 \begin{align*}
    &\int_{M}(\Ric_{N}(\eta,\eta)+\Ric^{N}_{n-1}(\frac{\nabla u}{|\nabla u|},\frac{\nabla u}{|\nabla u|}))|\nabla u|^2\varphi^2+\frac{(5-n)}{4}H^2|\nabla u|^2\varphi^2+\frac{1}{n-1}\big{|}\nabla{|\nabla u|}\big{|}^2\\
   & \quad\quad\leq \int_{M}|\nabla u|^2|\nabla \varphi|^2+\int_{\partial M}|\nabla u|\nabla_{\nu}|\nabla u|\varphi^2-A_{\partial N}(\eta,\eta)|\nabla u|^2\varphi^2
 \end{align*}
 for any compactly supported smooth function $\varphi$ satisfying
 \[
 \int_{M}\varphi|\nabla u|=0.
 \]
 In particular, if $A_2\geq 0$ and $\lambda_{\biRic_{N}}\geq \frac{(n-5)}{4}H^2$, then we have
 \[
 \int_{M}\frac{1}{n-1}\big{|}\nabla{|\nabla u|}\big{|}^2\varphi^2 \leq \int_{M}|\nabla u|^2|\nabla \varphi|^2.
 \]
\end{theo}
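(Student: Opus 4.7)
The plan is to insert $f = \varphi\abs{\nabla u}$ as a test function into the weak stability inequality; this is admissible because $\int_M \varphi\abs{\nabla u}=0$ by hypothesis. Expanding $|\nabla f|^2 = \varphi^2|\nabla\abs{\nabla u}|^2 + \abs{\nabla u}^2|\nabla\varphi|^2 + \tfrac12\langle\nabla\varphi^2,\nabla\abs{\nabla u}^2\rangle$ and integrating the cross term by parts produces a boundary contribution $\int_{\partial M}\varphi^2\abs{\nabla u}\nabla_\nu\abs{\nabla u}$ together with a bulk term $-\tfrac12\int_M\varphi^2\Delta\abs{\nabla u}^2$. Since $u$ is harmonic on $M$, the Bochner formula gives $\tfrac12\Delta\abs{\nabla u}^2 = |\Hess u|^2 + \Ric_M(\nabla u,\nabla u)$, and rearranging against the right-hand side of stability yields an expression purely in terms of $M$-intrinsic curvatures, $\abs{\nabla\varphi}$, and the boundary terms involving $A(\eta,\eta)$ and $\nabla_\nu\abs{\nabla u}$.

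Next I would apply the refined Kato inequality for harmonic functions, $|\Hess u|^2 - |\nabla\abs{\nabla u}|^2 \geq \tfrac{1}{n-1}|\nabla\abs{\nabla u}|^2$, which produces the $\tfrac{1}{n-1}|\nabla\abs{\nabla u}|^2\varphi^2$ term appearing on the left. The remaining intrinsic contribution $\Ric_M(\nabla u,\nabla u) + |\secondfund|^2\abs{\nabla u}^2$ is bounded below by combining the preceding Ricci comparison lemma with the subsequent algebraic proposition applied at $e_1=\nabla u/\abs{\nabla u}$ (using $|\secondfund|^2 = |\Phi|^2 + H^2/n$ to translate between the two forms); the $H$-dependent pieces collect exactly into $\tfrac{5-n}{4}H^2\abs{\nabla u}^2$ and the extra $|\secondfund|^2$ terms balance, delivering the main inequality. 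The one technical nuisance is the zero set $\{\nabla u=0\}$, handled in the standard way by replacing $\abs{\nabla u}$ with $\sqrt{\abs{\nabla u}^2+\varepsilon^2}$ and sending $\varepsilon\to 0$.

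For the specialized inequality, I would use the identity $\Ric_N(\eta,\eta) + \Ric^N_{n-1}(e_1,e_1) = \biRic_N(e_1,\eta)$ valid for any unit $e_1\in T_pM$ with $\eta\perp M$, so the hypothesis $\biRic_N\geq\tfrac{n-5}{4}H^2$ cancels the $\tfrac{5-n}{4}H^2$ bulk term exactly. For the boundary piece, the Neumann condition $\nabla_\nu u=0$ forces $\nabla u\in T\partial M$ along $\partial M$, and a direct Hessian computation yields $\abs{\nabla u}\nabla_\nu\abs{\nabla u} = A^{\partial M}(\nabla u,\nabla u)$ with $A^{\partial M}$ the second fundamental form of $\partial M\hookrightarrow M$ relative to $\nu$. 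The free boundary condition identifies $\nu$ (up to sign) with the outward normal of $\partial N$, giving $A^{\partial M}(X,X) = -A(X,X)$ for $X\in T\partial M$; it also places $\eta$ tangent to $\partial N$ and orthogonal to $T\partial M$. Thus the boundary integrand becomes $-\abs{\nabla u}^2\bigl(A(e_1,e_1) + A(\eta,\eta)\bigr)$ with $\{e_1,\eta\}$ an orthonormal pair in $T\partial N$, and $A_2\geq 0$ makes this non-positive, killing the boundary term. The main obstacle I anticipate is keeping the sign conventions consistent in this last boundary computation; once those are pinned down, the reduction is immediate.
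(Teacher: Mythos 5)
Your proposal is correct and follows essentially the same route as the paper: substituting $f=\varphi|\nabla u|$ into the weak stability inequality, integrating the cross term by parts, applying Bochner plus the refined Kato inequality, invoking the Ricci comparison lemma together with the algebraic proposition at $e_1=\nabla u/|\nabla u|$, and handling the boundary via the Neumann and free boundary conditions so that the integrand becomes $-|\nabla u|^2\bigl(A(e_1,e_1)+A(\eta,\eta)\bigr)\le 0$ under $A_2\ge 0$, with the $\biRic$ identity absorbing the bulk curvature terms. Your detour through the second fundamental form of $\partial M\hookrightarrow M$ is just a repackaging of the paper's direct differentiation of $\langle\nabla u,\nu\rangle=0$ along $\partial M$, and your $\varepsilon$-regularization of $|\nabla u|$ is a point of care the paper leaves implicit.
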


\begin{proof}
For any family of immersion with speed $\frac{d}{dt}|_{t=0}F_{t}(M)=f\eta$, then
\begin{align*}
  0&\leq\frac{d^2}{dt^2}\big|_{t=0}\Area(F_{t}(M)) \\
  &=\int_{M}|\nabla f|^2-(|\secondfund|^2+\Ric_{N}(\eta,\eta))f^2-\int_{\partial M}A(\eta,\eta)f^2,
\end{align*}
for any compactly supported piecewise smooth functions $f:M\rightarrow \mathbb{R}$ satisfying
\begin{align}\label{int-zero}
\int_{M}f=0.
\end{align}
We substitute $f$ by $\varphi|\nabla u|$ which also satisfies (\ref{int-zero}). To avoid the occurrence of singularity, we can first plug $\sqrt{|\nabla u|^2+\epsilon}\varphi$ into the second variation formula and let $\epsilon\to 0$. Let $\Phi=\secondfund-\frac{1}{n}HI$, we get
$$\int_{M}\big|\nabla(\varphi|\nabla u|)\big|^2\geq\int_{M}(\Ric_{N}(\eta,\eta)+|\Phi|^2+\frac{1}{n}H^2)\varphi^2|\nabla u|^2+\int_{\partial M}A_{\partial N}(\eta,\eta)\varphi^2|\nabla u|^2.$$
On the one hand,
\begin{align*}
  \int_{M}\big|\nabla(\varphi|\nabla u|)\big|^2 &=\int_{M}|\nabla \varphi|^2|\nabla u|^2+\varphi^2\big|\nabla |\nabla u|\big|^2+2\bangle{\nabla\varphi,\nabla|\nabla u|}|\nabla u|\varphi \\
  &=\int_{M}|\nabla \varphi|^2|\nabla u|^2-|\nabla u|\Delta|\nabla u|\varphi^2+\int_{\partial M}|\nabla u|\nabla_{\nu}|\nabla u|\varphi^2,
\end{align*}
On the other hand, because $u$ is harmonic on $M$, recall that Bochner formula and refined Kato inequality(c.f. \cite[Lemma 8.6]{Chodoshlecture}) written below respectively,
\begin{align*}
\frac{1}{2}\Delta|\nabla u|^2&=\Ric_{M}(\nabla u,\nabla u)+|\nabla^2 u|^2\\
|\nabla^2 u|^2&\geq \frac{n}{n-1}\big|\nabla|\nabla u|\big|^2.
\end{align*}
Since $$\frac{1}{2}\Delta|\nabla u|^2=|\nabla u|\Delta|\nabla u|+\big|\nabla|\nabla u|\big|^2,$$
Putting them together, we obtain that
\begin{align*}
|\nabla u|\Delta|\nabla u|&=\Ric_{M}(\nabla u,\nabla u)+|\nabla^2 u|^2-\big|\nabla|\nabla u|\big|^2\\
&\geq \Ric_{M}(\nabla u,\nabla u)+\frac{1}{n-1}\big|\nabla|\nabla u|\big|^2.
\end{align*}
According to the lower bound of $\Ric_{M}(\nabla u,\nabla u)$, we have
\begin{align*}
  |\nabla u|\Delta|\nabla u|&\geq\Ric^{N}_{n-1}(\nabla u,\nabla u) -\frac{n-1}{n}|\secondfund|^2|\nabla u|^2+\frac{1}{n}H^2|\nabla u|^2\\
  &\quad+\frac{(n-2)}{n}H\secondfund(\nabla u,\nabla u)|\nabla u|^2+\frac{1}{n-1}\big|\nabla|\nabla u|\big|^2. \\
\end{align*}
Therefore,
\begin{align*}
&\int_{M}|\nabla \varphi|^2|\nabla u|^2+\int_{\partial M}|\nabla u|\nabla_{\nu}|\nabla u|\varphi^2+A(\eta,\eta)|\nabla u|^2\varphi^2\\
&\geq\int_{M}(\Ric_{N}(\eta,\eta)+\Ric^{N}_{n-1}(\frac{\nabla u}{|\nabla u|},\frac{\nabla u}{|\nabla u|}))|\nabla u|^2\varphi^2+(|\Phi|^2+\frac{1}{n}H^2)|\nabla u|^2\varphi^2\\
&\quad\quad-\int_{M}\frac{n-1}{n}(|\Phi|^2+\frac{1}{n}H^2)|\nabla u|^2|\varphi|^2+\int_{M}\frac{(n-2)}{n}H\secondfund(\frac{\nabla u}{|\nabla u|},\frac{\nabla u}{|\nabla u|})|\nabla u|^2\varphi^2\\
&\quad\quad+\int_{M}\frac{1}{n}H^2|\nabla u|^2\varphi^2+\frac{1}{n-1}\big|\nabla |\nabla u|\big|^2\varphi^2\\
&=\int_{M}(\Ric_{N}(\eta,\eta)+\Ric^{N}_{n-1}(\frac{\nabla u}{|\nabla u|},\frac{\nabla u}{|\nabla u|}))|\nabla u|^2\varphi^2+\frac{1}{n}|\Phi|^2|\nabla u|^2|\varphi|^2+\frac{(n+1)}{n^2}H^2|\nabla u|^2\varphi^2\\
&\quad\quad+\frac{(n-2)}{n^2}\secondfund(\frac{\nabla u}{|\nabla u|},\frac{\nabla u}{|\nabla u|})H^2|\nabla u|^2\varphi^2+\frac{1}{n-1}\big|\nabla|\nabla u|\big|^2\varphi^2.\\
\end{align*}
Combined with Proposition \ref{prop-3.4},
$$\frac{1}{n}|\Phi|^2+\frac{(n-1)}{n^2}H^2+\frac{(n-2)}{n^2}\secondfund(e_1,e_1)H^2\geq \frac{(5-n)}{4}H^2,$$
we obtain
\begin{align*}
    &\int_{M}(\Ric_{N}(\eta,\eta)+\Ric^{N}_{n-1}(\frac{\nabla u}{|\nabla u|},\frac{\nabla u}{|\nabla u|}))|\nabla u|^2\varphi^2+\int_{M}\frac{(5-n)}{4}H^2|\nabla u|^2\varphi^2+\frac{1}{n-1}\big{|}\nabla{|\nabla u|}\big{|}^2\varphi^2\\
   & \quad\quad\leq \int_{M}|\nabla u|^2|\nabla \varphi|^2+\int_{\partial M}|\nabla u|\nabla_{\nu}|\nabla u|\varphi^2-A_{\partial N}(\eta,\eta)|\nabla u|^2\varphi^2.
 \end{align*}
 Moreover, using Neumann condition, 
 $$0=\nabla_{\nabla u}\bangle{\nabla u,\nu}=\bangle{\nabla_{\nabla u}\nabla u,\nu}+\bangle{\nabla u,\nabla_{\nabla u}\nu}.$$
 We also note that
 $$|\nabla u|\nabla_{\nu}|\nabla u|=\frac{1}{2}\nabla_{\nu}|\nabla u|^2=\bangle{\nabla_{\nu}\nabla u,\nabla u}=\bangle{\nabla_{\nabla u}\nabla u,\nu}.$$
 Hence
 \begin{align*}
   &\int_{\partial M}|\nabla u|\nabla_{\nu}|\nabla u|\varphi^2-A(\eta,\eta)|\nabla u|^2\varphi^2 \\
   &=\int_{\partial M}-|\nabla u|^2(\bangle{\frac{\nabla u}{|\nabla u|},\nabla_{\frac{\nabla u}{|\nabla u|}}\nu}+A(\eta,\eta))\varphi^2\\
   &=\int_{\partial M}-|\nabla u|^2(\bangle{\frac{\nabla u}{|\nabla u|},\nabla_{\frac{\nabla u}{|\nabla u|}}\nu}+\bangle{\eta,\nabla_{\eta}\nu})\varphi^2.\\
 \end{align*}
 If $A_{2}\geq 0$, note that $\eta \bot M$ while $\nabla u$ is the tangent vector field in $M$, the above integrand over $\partial M$ is non-positive, we get
  \begin{align*}
    &\int_{M}(\Ric_{N}(\eta,\eta)+\Ric^{N}_{n-1}(\frac{\nabla u}{|\nabla u|},\frac{\nabla u}{|\nabla u|}))|\nabla u|^2\varphi^2+\int_{M}\frac{(5-n)}{4}H^2|\nabla u|^2\varphi^2+\frac{1}{n-1}\big{|}\nabla{|\nabla u|}\big{|}^2\varphi^2\\
   & \quad\quad\leq \int_{M}|\nabla u|^2|\nabla \varphi|^2.
 \end{align*}
 Considering that
 $$\biRic_{N}\big(\eta,\frac{\nabla u}{|\nabla u|}\big)=\Ric_{N}(\eta,\eta)+\Ric^{N}_{n-1}\big(\frac{\nabla u}{|\nabla u|},\frac{\nabla u}{|\nabla u|}\big),$$
 we write the above inequality as
 \begin{align*}
    &\int_{M}\biRic_{N}(\eta,\frac{\nabla u}{|\nabla u|})|\nabla u|^2\varphi^2+\frac{(5-n)}{4}H^2|\nabla u|^2\varphi^2+\frac{1}{n-1}\big{|}\nabla{|\nabla u|}\big{|}^2\varphi^2\\
   & \quad\quad\leq \int_{M}|\nabla u|^2|\nabla \varphi|^2.
 \end{align*}
If $A_2\geq 0$ and $\lambda_{\biRic_{N}}\geq \frac{(n-5)}{4}H^2$, then
 \[
 \int_{M}\frac{1}{n-1}\big{|}\nabla{|\nabla u|}\big{|}^2\varphi^2 \leq \int_{M}|\nabla u|^2|\nabla \varphi|^2.
 \]
\end{proof}
To prove that there exists at most one non-parabolic end in $M$, we follow the method of \cite{Wuyujie2023} by producing a non-constant harmonic function with finite Dirichlet energy with respect to a fixed compact set.
\begin{theo}[{\cite[Theorem 4.12]{Wuyujie2023}}]%这个是吴玉洁上的定理4.12，先放在这里
If $E$ is a non-parabolic end of $M$, then there is a positive harmonic function $f$ over $E$ with $f\big |_{\partial_{1}E}=1$ and $\partial_{\nu}f\big|_{\partial_{0}E}=0$, that is minimal among all such harmonic functions and has finite Dirichlet energy.
\end{theo}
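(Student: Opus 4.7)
The plan is a classical Li--Tam exhaustion argument, adapted to the mixed Dirichlet--Neumann setting that arises for free boundary ends. Fix a point $p\in\partial M$ and exhaust $E$ by $E_i := E\cap B_p(R_i)$ with $R_i\nearrow\infty$. On each $E_i$ I would solve the mixed boundary value problem
\begin{equation*}
\Delta f_i=0\ \text{in}\ E_i,\qquad f_i\big|_{\partial_1 E}=1,\qquad \partial_\nu f_i\big|_{\partial_0 E\cap E_i}=0,\qquad f_i\big|_{\partial_1 B_p(R_i)\cap E}=0,
\end{equation*}
via the direct method: $f_i$ is the unique minimizer of $\int_{E_i}|\nabla u|^2$ over $H^1$ competitors with the prescribed Dirichlet data. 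The maximum principle for mixed data then yields $0\le f_i\le 1$ throughout $E_i$.

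For a uniform Dirichlet-energy bound, I would fix once and for all a Lipschitz cutoff $\psi$ on $E$ that equals $1$ in a collar neighborhood of $\partial_1 E$, vanishes outside $B_p(R_0+1)$, and satisfies $\partial_\nu\psi|_{\partial_0 E}=0$ (the last built from the normal collar of $\partial M$). For all $i$ large enough $\psi$ is an admissible competitor for $f_i$, so
\begin{equation*}
\int_{E_i}|\nabla f_i|^2 \le \int_E|\nabla\psi|^2 =: C_0 < \infty,
\end{equation*}
uniformly in $i$. Combined with $0\le f_i\le 1$ and elliptic estimates up to the boundary, Arzel\`a--Ascoli extracts a subsequence converging in $C^{2,\alpha}_{loc}(E)$ to a harmonic $f$ satisfying the prescribed mixed boundary conditions. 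The nonparabolicity of $E$ rules out the trivial limit $f\equiv 1$ by the characterization lemma quoted earlier in the excerpt, so $f$ is a genuine barrier, and lower semicontinuity of the Dirichlet integral gives $\int_E|\nabla f|^2 \le C_0 < \infty$.

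Minimality follows by a maximum-principle comparison. Given any positive harmonic $g$ on $E$ with $g|_{\partial_1 E}=1$ and $\partial_\nu g|_{\partial_0 E}=0$, the difference $f_i-g$ is harmonic on $E_i$, vanishes on $\partial_1 E$, has zero normal derivative on $\partial_0 E\cap E_i$, and equals $-g\le 0$ on $\partial_1 B_p(R_i)\cap E$. A Hopf-type maximum principle for mixed boundary problems forces $f_i\le g$ on $E_i$, and sending $i\to\infty$ yields $f\le g$ on $E$, which is precisely the pointwise minimality asserted.

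The main obstacle I anticipate is boundary regularity at the two families of corners $\partial_1 E\cap\partial_0 E$ and $\partial_0 E\cap\partial_1 B_p(R_i)$, where Dirichlet and Neumann conditions meet: at generic angles such mixed problems lose full $C^{2,\alpha}$ regularity, which would hamper both the maximum principle step and the elliptic compactness. This is precisely why the excerpt's remark prearranges $\partial_0 E$ to meet $\partial_1 E$ at a constant angle in $(0,\pi/8)$; in this regime one obtains enough H\"older control at the corners to justify every step above. With the corner regularity settled, the remainder is a routine extension of Li--Tam's exhaustion to the free-boundary category.
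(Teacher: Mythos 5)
Your proposal is correct and follows essentially the same Li--Tam exhaustion argument as the cited source: the paper itself offers no proof here, quoting \cite[Theorem 4.12]{Wuyujie2023}, whose proof is precisely this scheme (mixed Dirichlet--Neumann problems on $E\cap B_p(R_i)$, uniform energy bound from a fixed cutoff competitor, compactness under the corner-angle normalization, comparison for minimality). One small remark: you need not invoke the characterization lemma to exclude $f\equiv 1$, since your own comparison step applied to a barrier $g<1$ furnished by the definition of non-parabolicity already gives $f\le g<1$ in the interior.
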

\begin{theo}[{\cite[Theorem 5.1]{Wuyujie2023}}]\label{theo23}
Let $(M^{n},\partial M)$ be a complete Riemannian  manifold, $K\subset M$ is compact and $E_1$, $E_2$ are two non-parabolic components of $M\setminus K$. Then there is a non-constant bounded harmonic function with finite Dirichlet energy on $M$.
\end{theo}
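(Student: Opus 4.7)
The plan is to carry out a Li--Tam type construction adapted to the free boundary setting. Since $E_{1}$ and $E_{2}$ are non-parabolic, the preceding theorem produces minimal barrier functions $f_{1}, f_{2}$ on $E_{1}, E_{2}$, each harmonic with $f_{i}|_{\partial_{1}E_{i}}=1$, Neumann data on $\partial_{0}E_{i}$, strictly less than $1$ in the interior, and of finite Dirichlet energy. By Lemma \ref{lem14}, each $f_{i}$ arises as the locally uniform $C^{2,\alpha}$ limit of solutions $f_{i}^{(R)}$ on $E_{i}\cap B_{p}(R)$ whose Dirichlet data is $1$ on $\partial_{1}E_{i}$ and $0$ on $\partial_{1}B_{p}(R)\cap E_{i}$, with Neumann data on $\partial_{0}E_{i}$.

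For each large $R$, I would solve on $\Omega_{R}:=M\cap B_{p}(R)$ the mixed boundary value problem
\[
\Delta u_{R}=0 \;\text{in}\; \Omega_{R}, \qquad \partial_{\nu}u_{R}=0 \;\text{on}\; \partial M\cap B_{p}(R),
\]
with Dirichlet data $u_{R}=1$ on $\partial B_{p}(R)\cap E_{1}$ and $u_{R}=0$ on $\partial B_{p}(R)\cap(M\setminus E_{1})$. After perturbing the intersection angle between $\partial B_{p}(R)$ and $\partial M$ into $(0,\pi/8)$, as in the Remark preceding Lemma \ref{lem14}, standard mixed BVP theory produces a $C^{2,\alpha}$ solution $u_{R}$. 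The maximum principle forces $0\leq u_{R}\leq 1$, and Schauder estimates together with Arzel\`a--Ascoli yield a diagonal subsequence converging in $C^{2,\alpha}_{loc}$ to a bounded harmonic function $u$ on $M$ with $\partial_{\nu}u=0$ on $\partial M$.

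The crucial step is to show $u$ is nonconstant. On $E_{1}\cap B_{p}(R)$, the harmonic function $1-f_{1}^{(R)}$ equals $1$ on $\partial B_{p}(R)\cap E_{1}$, equals $0\leq u_{R}$ on $\partial_{1}E_{1}$, and satisfies Neumann on $\partial_{0}E_{1}$. The mixed maximum principle therefore gives $u_{R}\geq 1-f_{1}^{(R)}$ on $E_{1}\cap B_{p}(R)$, so passing to the limit yields $u\geq 1-f_{1}>0$ in the interior of $E_{1}$, whence $u\not\equiv 0$. Analogously, on $E_{2}\cap B_{p}(R)$ the comparison $u_{R}\leq f_{2}^{(R)}$ holds, which in the limit gives $u\leq f_{2}<1$ in the interior of $E_{2}$, so $u\not\equiv 1$. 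For the finite Dirichlet energy I would invoke the Dirichlet principle: build a test competitor $v_{R}$ with the same Dirichlet data as $u_{R}$ on $\partial B_{p}(R)$ by taking $v_{R}=1-f_{1}^{(R)}$ on $E_{1}$, $v_{R}=f_{j}^{(R)}$ on every other end $E_{j}$ (parabolic or not), and interpolating by a fixed Lipschitz function on $K$ so that $v_{R}$ is continuous across the interior interfaces $\partial_{1}E_{j}$. The end contributions are uniformly bounded by the finite Dirichlet energies of the limiting barriers (parabolic ends contributing vanishingly in $R$, by Lemma \ref{lem14}), and the $K$-contribution is a fixed constant, so $\int_{\Omega_{R}}|\nabla u_{R}|^{2}\leq C$ uniformly; lower semicontinuity then delivers $\int_{M}|\nabla u|^{2}\leq C$.

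The main obstacle is the nonconstancy step, which hinges on the strict interior bound $f_{i}<1$ on each non-parabolic end---a consequence of the strong maximum principle applied to the minimal barrier---to ensure the limit does not degenerate into the trivial constants $0$ or $1$. A secondary technical point is corner regularity at $\partial B_{p}(R)\cap\partial M$, handled as in the Remark preceding Lemma \ref{lem14} by perturbing the domain so that the dihedral angle lies in $(0,\pi/8)$, allowing standard mixed elliptic theory to apply without modification.
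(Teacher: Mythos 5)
Your overall strategy (the Li--Tam exhaustion construction with Neumann condition on $\partial M$, comparison with the end barriers, and the Dirichlet-principle energy bound) is the right one; note that the paper itself does not prove this statement but imports it from \cite[Theorem 5.1]{Wuyujie2023}, which follows the same scheme. The comparisons $u_R\geq 1-f_1^{(R)}$ on $E_1\cap B_p(R)$ and $u_R\leq f_2^{(R)}$ on $E_2\cap B_p(R)$ are correct, and the energy bound is fine (in fact you can avoid the ``sum over all other ends'' issue -- problematic if $M\setminus K$ has infinitely many unbounded components -- by taking the single competitor $v_R=1-f_1^{(R)}$ on $E_1\cap B_p(R)$ and $v_R\equiv 0$ elsewhere, which is continuous because $1-f_1^{(R)}=0$ on $\partial_1E_1$).

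The genuine gap is the nonconstancy step. Showing $u\not\equiv 0$ and $u\not\equiv 1$ does not show $u$ is nonconstant: your inequalities only give, for a hypothetical constant limit $u\equiv c$, the constraints $c\geq 1-\inf_{E_1}f_1$ and $c\leq \inf_{E_2}f_2$, which is perfectly consistent with some $c\in(0,1)$ if the barriers are bounded away from $0$. What is needed, and what your argument never establishes, is that the \emph{minimal} barrier of a nonparabolic end satisfies $\inf_{E_i}f_i=0$. This follows from the minimality stated in the paper's lemma on nonparabolic ends: if $\delta:=\inf_{E}f>0$, then $h:=(f-\delta)/(1-\delta)$ is harmonic, satisfies the same Neumann condition on $\partial_0E$, equals $1$ on $\partial_1E$, is nonnegative, and satisfies $h-f=\delta(f-1)/(1-\delta)\leq 0$; comparing $h$ with each $f^{(R)}$ via the mixed maximum principle gives $h\geq f$, hence $h=f$ and $f\equiv 1$, contradicting nonparabolicity. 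With $\inf_{E_1}f_1=\inf_{E_2}f_2=0$ in hand, your comparisons yield $\sup_{E_1}u=1$ and $\inf_{E_2}u=0$, so $u$ cannot be constant, and the proof closes. Without this (or an equivalent flux argument), the proof as written does not rule out degenerate constant limits in $(0,1)$.
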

The next Theorem shows that $M$ has at most one end under weaker conditions, which is a generalization of \cite[Theorem 5.3]{Wuyujie2023}.
\begin{theo}\label{at most-one end}
  Let $(N^{n+1},\partial N)$ be a complete manifold satisfying $\lambda_{\biRic_{N}}\geq \frac{(n-5)}{4}H^2$, and the second fundamental form $A_{\partial N}$ of the boundary $\partial N$ in $N$ satisfies $A_2\geq 0$, and let $(M^n, \partial M)$ be a complete non-compact weakly stable FBCMC hypersurface with infinite volume, then for any compact set $K$ with $\partial K\subset\partial M$, there is at most one non-parabolic component in $M\setminus K$. In particular, $M$ has at most one non-parabolic end.
\end{theo}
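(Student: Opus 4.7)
I argue by contradiction, adapting the Li--Wang one-end scheme (cf.\ Cheng--Cheung--Zhou) to the weakly stable free boundary CMC setting. Suppose $M\setminus K$ admits two distinct non-parabolic components $E_1$ and $E_2$. By Theorem \ref{theo23} there is a nonconstant bounded harmonic function $u\in C^{2,\alpha}_{\mathrm{loc}}(M)$ with $\nabla_\nu u|_{\partial M}=0$ and finite Dirichlet energy $\int_M|\nabla u|^2<\infty$; in particular $\nabla u\not\equiv 0$.

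\textbf{Step 1: Admissible test function.} Fix $p\in\partial M$ and let $\chi_R$ be a Lipschitz cutoff with $\chi_R\equiv 1$ on $B_p(R)\cap M$, $\chi_R\equiv 0$ outside $B_p(2R)\cap M$, and $|\nabla\chi_R|\leq C/R$. Let $\chi_{E_j,R}$ denote the function that equals $\chi_R$ on $E_j$ and $0$ on $M\setminus E_j$. After possibly interchanging $E_1\leftrightarrow E_2$ and passing to a subsequence in $R$, I may assume $\int_M\chi_{E_1,R}|\nabla u|\leq\int_M\chi_{E_2,R}|\nabla u|$. Set
\[
\varphi_R:=\chi_{E_1,R}-\lambda_R\,\chi_{E_2,R},\qquad \lambda_R:=\frac{\int_M\chi_{E_1,R}|\nabla u|}{\int_M\chi_{E_2,R}|\nabla u|}\in[0,1].
\]
Then $\varphi_R$ is compactly supported with disjointly-supported positive and negative parts, and $\int_M\varphi_R|\nabla u|=0$, so it is admissible in Theorem \ref{theo1}.

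\textbf{Step 2: Passing to the limit.} Under $A_2\geq 0$ and $\biRic_N\geq\tfrac{n-5}{4}H^2$, Theorem \ref{theo1} applied to $\varphi_R$ yields
\[
\int_M\tfrac{1}{n-1}\bigl|\nabla|\nabla u|\bigr|^2\bigl(\chi_{E_1,R}^2+\lambda_R^2\,\chi_{E_2,R}^2\bigr)\leq\int_{B_p(2R)\setminus B_p(R)}|\nabla u|^2\bigl(|\nabla\chi_{E_1,R}|^2+\lambda_R^2|\nabla\chi_{E_2,R}|^2\bigr).
\]
Because $\lambda_R\in[0,1]$, $|\nabla\chi_{E_j,R}|\leq C/R$, and $\int_M|\nabla u|^2<\infty$, the right-hand side tends to zero as $R\to\infty$. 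Retaining only the $\chi_{E_1,R}^2$ term on the left and letting $R\to\infty$ gives $\int_{E_1}|\nabla|\nabla u||^2=0$, so $|\nabla u|$ is constant on $E_1$. An analogous argument, with the labels of $E_1$ and $E_2$ exchanged so that the normalization $\lambda_R\in[0,1]$ holds in the reversed configuration, yields $|\nabla u|$ constant on $E_2$.

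\textbf{Step 3: Contradiction.} Each end $E_j$ has infinite volume (by the argument used in Proposition \ref{prop20}), so the bound $\int_M|\nabla u|^2<\infty$ forces the constant value of $|\nabla u|$ to vanish on each $E_j$. Thus $u$ is locally constant on the open set $E_1\cup E_2$. Unique continuation for the harmonic function $u$ then makes $u$ globally constant on the connected manifold $M$, contradicting nonconstancy. Therefore $M\setminus K$ has at most one non-parabolic component, and in particular $M$ admits at most one non-parabolic end.

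\textbf{Main obstacle.} The principal technicality, absent from the minimal analogue, is the zero-mean constraint $\int_M\varphi|\nabla u|=0$ built into weak stability. The remedy of using opposite-sign cutoffs on the two hypothetical non-parabolic ends is natural, but one must normalize so that the balancing coefficient $\lambda_R$ remains bounded; this is arranged by labeling $E_1,E_2$ so that $\chi_{E_1,R}|\nabla u|$ integrates to the smaller value, giving $\lambda_R\in[0,1]$. Once this is in place, the Dirichlet-energy bound $\int_M|\nabla u|^2<\infty$ is enough to kill the right-hand side of Theorem \ref{theo1} in the limit and deliver $|\nabla|\nabla u||\equiv 0$ on each putative end.
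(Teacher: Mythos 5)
Your overall scheme (produce the nonconstant finite-Dirichlet-energy harmonic function from Theorem \ref{theo23}, feed a mean-zero test function into Theorem \ref{theo1}, and force $\nabla|\nabla u|\equiv 0$) is the right one, but Step 1 has a genuine gap: the function $\chi_{E_j,R}$, defined as $\chi_R$ on $E_j$ and $0$ on $M\setminus E_j$, jumps from $1$ to $0$ across the interior hypersurface $\partial_1 E_j=\partial E_j\cap \partial K$ (since $\chi_R\equiv 1$ on $B_p(R)\supset K$ for large $R$). Such a function is not Lipschitz and not in $W^{1,2}(M)$ (its distributional gradient has a singular part on $\partial_1E_j$), so $\varphi_R$ is not admissible in the stability inequality, and the bound $|\nabla\chi_{E_j,R}|\leq C/R$ on which your whole Step 2 rests is false. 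The natural repair, smoothing the cutoff across a fixed collar of $\partial_1E_j$ inside $E_j$, does not save the argument: on that collar $|\nabla\varphi_R|$ is of size $O(1)$ independently of $R$, so the right-hand side of Theorem \ref{theo1} picks up the fixed positive quantity $\int_{\mathrm{collar}}|\nabla u|^2$ and does not tend to zero; you then only get a finite bound on $\int_{E_1}|\nabla|\nabla u||^2$, not vanishing, and the conclusion that $|\nabla u|$ is constant on each end is lost. The structural problem is that any test function supported in a single end and equal to $1$ at a fixed interior point must transition to $0$ within a bounded distance (the distance to $\partial_1E_j$), so its gradient cannot be made uniformly small.

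The paper circumvents exactly this by not localizing the positive part to an end. It first shows $\int_M|\nabla u|=\infty$ (divergence theorem plus the co-area formula give $\int_{\partial B_p(R)\cap M}|\nabla u|\geq C_0>0$, hence linear growth of $\int_{B_p(R)}|\nabla u|$), and then takes $\psi_1$ equal to $1$ on the whole ball $B_p(\mu)$, decaying to $0$ over an annulus of width $R$, together with a far-away nonpositive annular function $\psi_2$; both have gradient $O(1/R)$, and the divergence of $\int_M|\nabla u|$ is what lets one choose the width $\tau$ of the region where $\psi_2=-1$ so that an intermediate value $t_0\in(0,1)$ yields $\int_M(\psi_1+t_0\psi_2)|\nabla u|=0$. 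This makes the right-hand side $O(R^{-2})\int_M|\nabla u|^2\to 0$ while keeping the test function equal to $1$ on $B_p(\mu)$, giving $\nabla|\nabla u|\equiv 0$ on all of $M$, after which the infinite volume of $M$ (the stated hypothesis — note your Step 3 instead needs infinite volume of each end, and you justify it by the bounded-geometry argument of Proposition \ref{prop20}, an assumption this theorem does not make) and finite Dirichlet energy force $|\nabla u|\equiv 0$, a contradiction. You would need to adopt this global-plus-balancing construction, or supply a genuinely different mechanism for making the cutoff gradients small near $\partial_1E_j$, before your argument closes.
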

\begin{proof}
We adapt the method of \cite[Theorem 3.1]{Cheng-Cheung-Zhou2008}, extending it to the free boundary case.
Assume that $E_1$ and $E_2$ are two unbounded non-parabolic components of $M\setminus K$. By Theorem \ref{theo23}, there exists a non-constant bounded harmonic function $u$ on $M$ that has finite Dirichlet energy and satisfies a Neumann boundary condition on $\partial M$. There exists a point $p\in \partial M$ such that $|\nabla u|(p)\neq 0$ and  $\int_{B_{p}(R)}|\nabla u|> 0$, where $B_{p}(R)$ is a geodesic ball of radius $R$ centered at $p$ in $M$. Its boundary $\partial B_{p}(R)$ consists of two parts: $\partial B_{p}(R) \cap \mathring{M}$ and $\partial B_{p}(R) \cap \partial M$. First, we claim that $u$ must satisfy $\int_{M}|\nabla u|=\infty$. Since $u$ is bounded, by divergence theorem we have
$$\int_{B_{p}(R)}|\nabla u|^2=\int_{\partial B_{p}(R)\cap M}u\frac{\partial u}{\partial \nu_{2}}+\int_{B_{p}(R)\cap\partial M}u\frac{\partial u}{\partial \nu}\leq C_{3}\int_{\partial B_{p}(R)\cap M}|\nabla u|,$$
where $C_{3}$ is a constant. Due to the Neumann boundary condition, the second integral vanishes. Hence, when $R>1$,
$$0<C_{4}=\int_{B_{p}(1)}|\nabla u|^2\leq \int_{B_{p}(R)}|\nabla u|^2\leq C_{3}\int_{\partial B_{p}(R)\cap M}|\nabla u|,$$
By the co-area formula, set $C_{5}=\frac{C_{3}}{C_{4}}$, we obtain
$$\int_{B_{p}(R)}|\nabla u|=\int_{1}^{R}dr\int_{\partial B_{p}(R)}|\nabla u|\geq C_{5}(R-1).$$
Letting $R\to\infty$, we have $\int_{M}|\nabla u|=\infty$ as claimed.
For a fixed $\mu>1$, let $d(x)$ be a smoothing of $d_{M}(p,x)$ satisfying $|\nabla d|<2$. we construct two functions below such that for $R>\mu$,
\[
\psi_{1}(\mu,R):=\begin{cases}
1&x\in \bar{B}_{p}(\mu)\\
\frac{\mu+R-d(x)}{R}&x\in \bar{B}_{p}(\mu+R)\backslash B_{p}(\mu)\\
0& x\in M\backslash B_{p}(\mu+R),
\end{cases}
\]
and
\[
\psi_{2}:=\begin{cases}
0&x\in \bar{B}_{p}(\mu+R)\\
\frac{\mu+R-d(x)}{R}&x\in \bar{B}_{p}(\mu+2R)\backslash B_{p}(\mu+R)\\
-1&x\in \bar{B}_{p}(\mu+2R+\tau)\backslash B_{p}(\mu+2R)\\
\frac{d(x)-(\mu+3R+\tau)}{R}&x\in \bar{B}_{p}(\mu+3R+\tau)\backslash B_{p}(\mu+2R+\tau)\\
0&x\in M\backslash B_{p}(\mu+3R+\tau),
\end{cases}
\]
where the constant $\tau>0$ will be determined later. For any $\epsilon>0$ given, we can choose large $R$ such that $\frac{1}{R^2}\int_{M}|\nabla u|^2<\epsilon$.
Now we define $\psi(t,\mu,R)=\psi_{1}(\mu,R)+t\psi_{2}(\mu,R)$, $t\in[0,1]$. we notice that
$$\int_{M}\psi(0,\mu,R)|\nabla u|\geq \int_{B_{p}(\mu)}|\nabla u|>0,$$
and
\begin{align*}
  \int_{M}\psi(1,\mu,R)|\nabla u|&=\int_{M}(\psi_{1}(\mu.R)+\psi_{2}(\mu,R))|\nabla u|\\
&\leq \int_{B_{p}(\mu+R)}|\nabla u|-\int_{B_{p}(\mu+2R+\tau)\backslash B_{p}(\mu+2R)}|\nabla u|.
\end{align*}
For $\mu$ and $R$ fixed, we may find a suitable $\tau$ depending on $\mu$ and $R$ such that
$$\int_{M}\psi(1,\mu,R)|\nabla u|<0.$$
By the continuity of $\psi(t,\mu,R)$ in $t$, there exists some $t_{0}\in (0,1)$ depending on $\mu$ and $R$ such that
$$\int_{M}\psi(t_{0},\mu,R)|\nabla u|=0.$$
Since $M$ is weakly stable, we set $f=\psi(t_{0},\mu,R)|\nabla u|$ which satisfies the stability inequality and hence satisfies Theorem \ref{theo1}, so 

\begin{align*}
%\int_{B_{p}(\mu)}(\Ric^{N}(\eta,\eta)+\Ric^{N}_{n-1}(\frac{\nabla u}{|\nabla u|},\frac{\nabla u}{|\nabla u|}))|\nabla u|^2\varphi^2+\int_{M}\frac{n^2(5-n)}{4}H^2|\nabla u|^2\varphi^2+
&\frac{1}{n-1}\int_{B_{p}(\mu)}\big|\nabla |\nabla u|\big|^2\\
&\quad\quad\leq\int_{M}|\nabla \psi_{1}|^2|\nabla u|^2+t_{0}^2\int_{M}|\nabla \psi_{2}|^2|\nabla u|^2\\
&\quad\quad=\int_{B_{p}(\mu+R)\backslash B_{p}(\mu)}|\nabla \psi_{1}|^2|\nabla u|^2+t_{0}^2\int_{B_{p}(\mu+3R+\tau)\backslash B_{p}(\mu+R)}|\nabla \psi_{2}|^2|\nabla u|^2\\
&\quad\quad\leq\frac{4}{R^2}\int_{B_{p}(\mu+R)\backslash B_{p}(\mu)}|\nabla u|^2+\frac{4}{R^2}\int_{B_{p}(\mu+3R+\tau)\backslash B_{p}(\mu+R)}|\nabla u|^2\\
&\quad\quad\leq\frac{4}{R^2}\int_{M}|\nabla u|^2<\epsilon.\\
\end{align*}
By the arbitrariness of $\epsilon$ and $\mu$, $\nabla |\nabla u|\equiv 0$. Hence $|\nabla u|\equiv$ constant. If $|\nabla u|\equiv $constant$ \neq 0$, then $u$ is a non-constant bounded harmonic function. But $M$ has infinite volume, then $\int_{M}|\nabla u|^2=\infty$, this is a contradiction since $u$ has finite Dirichlet energy.
\end{proof}
Finally, we give a characterization of one-endedness of $M$.
\begin{theo}
Let $(M^{n},\partial M)$ be a complete non-compact weakly stable FBCMC hypersurface properly immersed in $(N^{n+1},\partial N)$, where $N$ has bounded geometry and weakly convex boundary. If there exists $\delta_{0}>0$ such that $\lambda_{\Ric_{N}}+\frac{1}{n}H^2\geq\delta_{0}$ and $\lambda_{\biRic_{N}}\geq\frac{(n-5)}{4}H^2$ hold, then $M$ has only one end which must be non-parabolic.
\end{theo}
\begin{proof}
Using Proposition \ref{prop20}, we conclude that each end of $M$ is non-parabolic, and hence by Theorem \ref{at most-one end}, $M$ has only one end.
\end{proof}
%下面这些是第二部分2024年12月23日开始打的
\section{free boundary $\mu$-bubbles and 3-dimensional diameter estimates}
In this section, we construct free boundary $\mu$-bubbles to establish diameter estimates in spectral sense for compact manifolds with weakly convex boundary. This serves as a boundary version of Antonelli and Xu's theorem \cite{Antonelli-Xu}. To set the stage, we review the necessary theory regarding $\mu$-bubbles.
\subsection{Free boundary $\mu$-bubbles}Suppose that $(M^{n},g)$ is a Riemannian manifold with codimension-2 corners, which means that any point $p$ has a neighborhood diffeomorphic to one of the following cases: $\RR^{n}$, $\{x\in \RR^{n}: x_{n}\geq 0\}$, $\{x\in\RR^{n}: x_{n-1}, x_{n}\geq 0\}$. Let $M\setminus\mathring{M}=\partial_{0}M\cup\partial_{-}M\cup\partial_{+}M$($\partial_{i}M$ is nonempty for $i\in\{0,-,+\}$), where $\partial_{-}M$ and $\partial_{+}M$ are disjoint and $\partial_{\pm}M\cap\partial_{0}M$ consists of codimension-2 closed submanifolds. Here, we assume that both $\partial_{+}M$ and $\partial_{-}M$ intersect with $\partial_{0}M$ at angles no more than $\frac{\pi}{8}$ inside $M$. We fix a smooth function $u>0$ on $M$ and a smooth function $h$ on $M\backslash(\partial_{-}M\cup\partial_{+}M)$, with $h\rightarrow\pm\infty$ on $\partial_{\pm}M$. We pick a regular value $c_{0}$ of $h$ on $N\setminus(\partial_{-}M\cup\partial_{+}M)$ and take $\Omega_{0}=h^{-1}((c_{0},\infty))$ as the reference set, and consider the following weighted area functional:
$$\cA_{k}(\Omega):=\int_{\partial^{*}\Omega}u^{k}d\cH^{n-1}-\int_{M}(\chi_{\Omega}-\chi_{\Omega_{0}})hu^{k}d\cH^{n},\quad k>0,$$
where $\partial^{*}\Omega$ denotes the reduced boundary of $\Omega$. If there exists a set $\Omega$ minimizing $\cA_{k}$ among the Caccioppoli sets $\Omega$ with $\Omega\Delta\Omega_{0}\subset\subset \mathring{M}$, and the reference set $\Omega_{0}$ with smooth boundary satisfies$$\partial\Omega_{0}\subset\mathring{M},\quad\quad\partial_{+}M\subset\Omega_{0},$$
then we call $\Sigma=\partial \Omega$ a (balanced free boundary) $\mu$-bubble.

 The existence and regularity of a minimizer of $\cA_{k}$ among all Caccioppoli sets was first claimed by Gromov in \cite{Gromov2019Fourlecture}, and was rigorously carried out by Zhu in \cite{zhujintian2021} and \cite{Chodosh-Li}. For the free boundary case, we refer readers to \cite{Wuyujie2023} for more details. 
% We just elaborate on the conclusions and omit the  proof.

\begin{prop}[Lemma 6.2, \cite{Wuyujie2023}]
There exists a smooth minimizer $\Omega$ for $\cA_{k}$ such that $\Omega\Delta\Omega_{0}$ is compactly contained in $\mathring{M}\cup\partial_{0}M$. The minimizer has smooth boundary which intersect with $\partial_{0}M$ orthogonally.
\end{prop}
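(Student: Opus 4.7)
The plan is to establish existence by the direct method in the calculus of variations and then derive interior and free-boundary regularity from the standard theory of (almost-)minimizers of the perimeter functional. The argument closely parallels the closed case of Zhu \cite{zhujintian2021} and Chodosh--Li, combined with the free-boundary modifications of Wu \cite{Wuyujie2023}; here one merely needs to carry those ideas through for the functional $\cA_k$. The existence step has three ingredients: a barrier confining admissible sets, $BV$ compactness, and lower semicontinuity of the weighted perimeter.

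First I would take a minimizing sequence $\{\Omega_j\}$ of admissible Caccioppoli sets. Because $h \to \pm\infty$ on $\partial_\pm M$, competitors obtained by adding or removing a thin neighborhood of $\partial_\pm M$ strictly decrease $\cA_k$, which yields a uniform bound $\Omega_j \Delta \Omega_0 \Subset K$ for some compact $K \subset M \setminus (\partial_-M \cup \partial_+M)$ depending only on $\cA_k(\Omega_0)$ and on the growth of $h$. On $K$ both $u^k$ and $hu^k$ are bounded, so the weighted perimeters $\int_{\partial^* \Omega_j} u^k\, d\cH^{n-1}$ are uniformly controlled. Standard $BV$ compactness produces an $L^1_{loc}$ subsequential limit $\Omega$, and lower semicontinuity of the weighted perimeter together with continuity of the bulk term gives that $\Omega$ minimizes $\cA_k$, with $\Omega \Delta \Omega_0 \Subset \mathring{M} \cup \partial_0 M$ by construction.

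For regularity, $\Omega$ is an almost-minimizer of the weighted perimeter $\int u^k\, d\cH^{n-1}$ with a bounded mean-curvature-type perturbation coming from the $hu^k$ bulk term. Away from $\partial_0 M$ the classical De Giorgi--Federer--Tamanini theory yields smoothness of $\partial \Omega \cap \mathring{M}$ outside a singular set of Hausdorff codimension at least $7$, which is empty in the dimensions of interest here. At points of $\partial \Omega \cap \partial_0 M$, I would invoke the free-boundary regularity for almost-minimizing currents meeting a prescribed obstacle (Grüter; De Philippis--Maggi), in the form used by Wu \cite{Wuyujie2023}, to obtain $C^{1,\alpha}$ regularity and then $C^{\infty}$ via Schauder theory. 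The orthogonality condition then drops out from the first variation: testing $\cA_k$ against a smooth compactly supported vector field $X$ tangent to $\partial_0 M$ yields a boundary integral $\int_{\partial \Omega \cap \partial_0 M} u^k \langle \nu_\Omega, X \rangle\, d\cH^{n-2}$ that must vanish for all such $X$, which forces $\nu_\Omega \perp \partial_0 M$ along $\partial \Omega \cap \partial_0 M$.

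I expect the main obstacle to be the regularity near the corners where $\partial_0 M$ abuts $\partial_\pm M$: there the free-boundary theory and the obstacle $h \to \pm\infty$ interact, and one must use the smallness of the contact angle (less than $\pi/8$) to show that $\partial \Omega$ stays strictly away from these corners. Concretely, one extends $u$ and $h$ smoothly past the edges to reduce to the clean free-boundary setting on $\partial_0 M$, and uses the barrier inherited from $h$ to rule out the minimizer touching the extended region. Once this confinement is in place, the usual free-boundary regularity machinery applies directly and both smoothness and orthogonal intersection follow.
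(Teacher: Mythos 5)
Your sketch is correct and coincides with the standard argument: the paper itself omits the proof of this proposition, stating only that existence and regularity were carried out by Zhu and Chodosh--Li in the closed case and by Wu in the free boundary case, and your outline (confinement via the blow-up of $h$ near $\partial_{\pm}M$, $BV$ compactness and lower semicontinuity, almost-minimizer regularity in the interior, Gr\"uter-type free boundary regularity along $\partial_{0}M$, and orthogonality from the first variation) is exactly the route taken in those references. No genuinely different method is involved, and the low-dimensional setting here ($n\leq 4$) indeed makes the singular set empty, as you note.
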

\begin{lemm}%[first variation formula]
If $\Omega_{t}$ is a smooth 1-parameter family of regions with $\Omega_{0}=\Omega$ and the normal variational vector field at $t=0$ is $\phi\nu_{\Sigma}$, then
\[
\frac{d}{dt}\cA_{k}(\Omega_{t})=\int_{\Sigma_{t}}{(H_{\Sigma_{t}}u^{k}-\nabla_{\nu_{\Sigma_{t}}}u^{k}-hu^k)}\phi+\int_{\partial\Sigma_{t}}u^{k}\bangle{\phi\nu_{\Sigma_{t}},\nu_{\partial\Sigma_{t}}},
\]
where $\nu_{\Sigma_{t}}$ is the outward pointing unit normal and $H_{\Sigma_{t}}$ is the mean curvature of $\partial\Omega_{t}$. In particular, a $\mu$-bubble $\partial\Omega=\Sigma$ satisfies
\[
H_{\Sigma}=-ku^{-1}\nabla^{M}_{\nu_{\Sigma}}u+h\quad on \quad\Sigma,\quad\quad \nu_{\partial\Sigma}\perp T_{x}\partial M \quad for \quad x \in\partial\Sigma\subset\partial_{0}M.
\]

\end{lemm}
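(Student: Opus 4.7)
The plan is to compute $\frac{d}{dt}\big|_{t=0}\cA_{k}(\Omega_{t})$ term by term, using the weighted first variation of area for the surface integral and direct differentiation under the integral sign for the bulk term, then to extract the Euler--Lagrange conditions from the critical point identity.

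For the weighted area term $\int_{\Sigma_{t}} u^{k}\, d\cH^{n-1}$, I would view $\Sigma_{t}$ as the flow of $\Sigma$ under a variation field extending $\phi\nu_{\Sigma}$, pull the induced area element back to $\Sigma$, and differentiate at $t=0$. Two bulk contributions appear: the pointwise variation of $u^{k}$ along the purely normal flow, contributing $\phi\,\nabla^{M}_{\nu_{\Sigma}}u^{k}$; and the variation of the induced volume form, contributing $u^{k}H\phi$ via the standard identity $\partial_{t}\,d\cH^{n-1}|_{t=0} = H\phi\, d\cH^{n-1}$ for normal deformations. Because $\partial\Sigma_{t}$ is itself moving, an additional boundary contribution $\int_{\partial\Sigma} u^{k}\bangle{\phi\nu_{\Sigma},\nu_{\partial\Sigma}}\, ds$ arises when one applies Stokes's theorem to rewrite the tangential-divergence piece; this is precisely the boundary term in the statement.

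For the bulk term $\int_{M}(\chi_{\Omega_{t}}-\chi_{\Omega_{0}})\,hu^{k}\, d\cH^{n}$, the moving domain $\Omega_{t}$ has boundary $\Sigma_{t}$ with outward normal velocity $\phi$, so its $t$-derivative is simply the flux $\int_{\Sigma}hu^{k}\phi\, d\cH^{n-1}$, with the sign dictated by the orientation convention of $\nu_{\Sigma}$ as outward to $\Omega$. Subtracting this flux from the weighted-area variation assembles the stated first-variation formula for $\cA_{k}$, modulo the sign conventions the paper adopts for $H$ and $\nu_{\Sigma}$.

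For the Euler--Lagrange characterization at a minimizer $\Omega$, I would proceed in two stages. First, testing with $\phi$ compactly supported away from $\partial\Sigma$ kills the boundary integral; the fundamental lemma of the calculus of variations then forces the interior equation $Hu^{k} = \nabla^{M}_{\nu_{\Sigma}}u^{k} + hu^{k}$, equivalent to $H = k u^{-1}\nabla^{M}_{\nu_{\Sigma}}u + h$ up to the paper's sign conventions. Second, allowing general admissible $\phi$---namely those whose associated normal variation, after a standard tangential correction, preserves $\partial\Sigma_{t}\subset\partial_{0}M$---the remaining boundary integral $\int_{\partial\Sigma} u^{k}\bangle{\phi\nu_{\Sigma},\nu_{\partial\Sigma}}$ must vanish for all such admissible fields. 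Since the projections onto $T_{x}\partial_{0}M$ of admissible variation fields span $T_{x}\partial_{0}M$ along $\partial\Sigma$, this vanishing forces $\nu_{\partial\Sigma}\perp T_{x}\partial_{0}M$, i.e.\ the claimed orthogonal meeting condition. The main technical obstacle I anticipate is precisely this boundary step: a purely normal variation $\phi\nu_{\Sigma}$ does not in general keep $\partial\Sigma_{t}$ inside $\partial_{0}M$, so one must add a tangential correction supported near $\partial\Sigma$ to make the variation admissible, and check that this correction does not alter the first variation at $t=0$---which holds because the correction is tangential to $\Sigma$ there. Once that standard adjustment is in place, both the variation formula and the free boundary Euler--Lagrange system follow from the computations above.
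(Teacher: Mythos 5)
The paper itself offers no proof of this lemma: it states the first and second variation formulas for $\cA_{k}$ and explicitly defers the details to Zhu, Chodosh--Li and Wu, so your argument can only be compared with the standard derivation in those references. In structure it matches it: differentiate the weighted area term (getting $\phi\nabla^{M}_{\nu_{\Sigma}}u^{k}+u^{k}H\phi$ in the interior), differentiate the bulk term as a flux $\int_{\Sigma}hu^{k}\phi$, obtain the interior Euler--Lagrange equation by testing with compactly supported $\phi$, and extract the contact-angle condition from variations that slide $\partial\Sigma$ along $\partial_{0}M$. Your sign caveat is appropriate; note that the paper's own displayed first variation would give $H=ku^{-1}\nabla^{M}_{\nu}u+h$, not the sign printed in its Euler--Lagrange line, so the mismatch is in the source, not in your computation.

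The one step you must repair is the boundary step, where your justification is internally inconsistent. As literally written (in the lemma and in your proposal) the boundary integrand $\bangle{\phi\nu_{\Sigma},\nu_{\partial\Sigma}}$ vanishes identically, because $\nu_{\partial\Sigma}$ is tangent to $\Sigma$ while $\nu_{\Sigma}$ is normal to it; the meaningful term is $\int_{\partial\Sigma}u^{k}\bangle{X,\nu_{\partial\Sigma}}$ for the full admissible variation field $X$, which along $\partial\Sigma$ is tangent to $\partial_{0}M$ and decomposes as $\phi\nu_{\Sigma}+X^{T}$ with $X^{T}$ tangent to $\Sigma$. Your closing claim that the tangential correction ``does not alter the first variation at $t=0$'' is false: it contributes exactly $\int_{\Sigma}\Div_{\Sigma}(u^{k}X^{T})=\int_{\partial\Sigma}u^{k}\bangle{X^{T},\nu_{\partial\Sigma}}$, and this contribution is precisely the source of the free boundary condition --- if it were genuinely absent, the boundary term would vanish for every admissible variation and nothing about $\nu_{\partial\Sigma}$ could be concluded. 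The correct chain is: compactly supported normal variations give $H=ku^{-1}\nabla^{M}_{\nu}u+h$ (up to the sign convention); then for a general admissible $X$ the interior term drops out and one is left with $\int_{\partial\Sigma}u^{k}\bangle{X,\nu_{\partial\Sigma}}=0$, and since $u>0$ and $X|_{\partial\Sigma}$ can be an arbitrary section of $T\partial_{0}M$, this forces $\nu_{\partial\Sigma}\perp T_{x}\partial_{0}M$. With that repair your proof is the standard one and is complete.
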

\begin{lemm}\label{sec-vara}
Assume $\Omega$ is a minimizer of $\cA_{k}$ in the settings above, then we have the following the second variational formula:
\begin{align*}
   & \frac{d^2}{dt^{2}}\big|_{t=0}\cA_{k}(\Omega_{t}) \\
  &=\int_{\Sigma}|\nabla^{\Sigma}\phi|^2u^k-(\Ric_{M}(\nu_{\Sigma},\nu_{\Sigma})+|\secondfund_{\Sigma}|^2)\phi^2u^k+k(\frac{k}{2}-1)u^{k-2}\phi^2\bangle{\nabla u,\nu_{\Sigma}}^2\\
  &\quad+ku^{k-1}\phi^2(\Delta_{M}u-\Delta_{\Sigma}u)-\phi^2u^k\bangle{\nabla h,\nu_{\Sigma}}-\frac{1}{2}h^2\phi^2u^k+\frac{1}{2}H_{\Sigma}^2\phi^2u^k\\
  &\quad-\int_{\partial\Sigma}A_{\partial M}(\nu_{\Sigma},\nu_{\Sigma})\phi^2 u^k. \\
\end{align*}
\end{lemm}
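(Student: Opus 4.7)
The plan is to differentiate the first variation identity from the preceding lemma at $t=0$ and simplify using the Euler--Lagrange equation of a $\mu$-bubble. Writing the first variation compactly as
\[
\frac{d}{dt}\cA_k(\Omega_t)=\int_{\Sigma_t}Q(t)\,\phi\,d\mathcal{H}^{n-1}+\int_{\partial\Sigma_t}u^k\phi\bangle{\nu_{\Sigma_t},\nu_{\partial\Sigma_t}}\,d\mathcal{H}^{n-2}
\]
with $Q(t):=H_{\Sigma_t}u^k-\nabla^M_{\nu_{\Sigma_t}}(u^k)-hu^k$, the minimality assumption forces $Q(0)\equiv 0$ on $\Sigma$, and the inclusion $\nu_{\partial\Sigma_t}\in T\Sigma_t$ makes the boundary pairing identically zero along the flow. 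Consequently every contribution arising from the derivative of $\phi$, of the area element on $\Sigma_t$, or of the boundary arc-length element is multiplied by a vanishing factor, and $\frac{d^2}{dt^2}\big|_{t=0}\cA_k(\Omega_t)=\int_\Sigma\dot Q\,\phi\,d\mathcal{H}^{n-1}$.

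To evaluate $\dot Q$ I will invoke the standard normal-variation identities
\[
\dot H_\Sigma=-\Delta_\Sigma\phi-(|\secondfund_\Sigma|^2+\Ric_M(\nu,\nu))\phi,\qquad \dot\nu_\Sigma=-\nabla^\Sigma\phi,\qquad \dot g=\phi\nabla^M_\nu g,
\]
the last applied to any ambient scalar $g$. Writing $\nabla^M_{\nu_t}(u^k)=ku^{k-1}\bangle{\nabla^M u,\nu_t}$ and differentiating with the product rule produces $k(k-1)u^{k-2}\phi(\nabla^M_\nu u)^2$, a Hessian contribution $ku^{k-1}\phi\,\Hess_M u(\nu,\nu)$, and a tangential piece $-ku^{k-1}\bangle{\nabla^\Sigma u,\nabla^\Sigma\phi}$ coming from $\dot\nu_\Sigma=-\nabla^\Sigma\phi$. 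After multiplying $\dot Q$ by $\phi$ and integrating the $-(\Delta_\Sigma\phi)\phi u^k$ piece by parts on $\Sigma$, the $|\nabla^\Sigma\phi|^2u^k$ term and the conormal boundary integral $-\int_{\partial\Sigma}\bangle{\nabla^\Sigma\phi,\nu_{\partial\Sigma}}\phi u^k$ emerge, while the product-rule cross term cancels precisely against the tangential piece coming from $\dot\nu_\Sigma$.

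Two algebraic simplifications then deliver the stated formula. First, the standard Laplacian decomposition
\[
\Hess_M u(\nu,\nu)=\Delta_M u-\Delta_\Sigma u-H\,\nabla^M_\nu u
\]
rewrites the Hessian contribution as the desired $ku^{k-1}\phi^2(\Delta_M u-\Delta_\Sigma u)$ piece plus a residual multiple of $H\nabla^M_\nu u$. Second, the Euler--Lagrange relation $H-h=ku^{-1}\nabla^M_\nu u$ inherited from $Q(0)\equiv 0$, combined with the symmetric identity $\frac{1}{2}(H^2-h^2)=\frac{1}{2}(H-h)(H+h)$, repackages the remaining $H\nabla^M_\nu u$ and $h\nabla^M_\nu u$ cross terms together with the direct expansion $k(k-1)u^{k-2}\phi^2(\nabla^M_\nu u)^2$ into the combination $\frac{1}{2}H^2\phi^2u^k-\frac{1}{2}h^2\phi^2u^k$ plus the displayed coefficient $k(\frac{k}{2}-1)u^{k-2}\phi^2\bangle{\nabla^M u,\nu}^2$.

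The main bookkeeping obstacle is tracking the numerous $H$, $h$, Hessian, and tangential cross terms through the computation and verifying that they fit together under the single Euler--Lagrange equation; once the cancellations above are carried out the remaining expression matches the claimed second variation formula exactly.
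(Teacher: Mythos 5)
Your overall route is the standard one (the paper itself omits the proof of this lemma and defers to the references): differentiate the first variation, kill the terms multiplying the vanishing Euler--Lagrange density, use $\dot H=-\Delta_\Sigma\phi-(|\secondfund_\Sigma|^2+\Ric_M(\nu_\Sigma,\nu_\Sigma))\phi$, $\dot\nu_\Sigma=-\nabla^\Sigma\phi$, the decomposition $\Hess_M u(\nu,\nu)=\Delta_M u-\Delta_\Sigma u-H\nabla^M_\nu u$, and the Euler--Lagrange relation to repackage the cross terms into $\tfrac12(H^2-h^2)$. With consistent conventions this does yield the displayed formula. However, as written your bookkeeping does not close up, for a concrete reason: you take $Q=Hu^k-\nabla^M_{\nu}(u^k)-hu^k$ (copying the sign in the paper's first-variation display, which is itself inconsistent with the paper's stated Euler--Lagrange equation $H=h-ku^{-1}\nabla^M_\nu u$) and simultaneously assert that the cross term $ku^{k-1}\phi\bangle{\nabla^\Sigma u,\nabla^\Sigma\phi}$ produced by integrating $-\phi u^k\Delta_\Sigma\phi$ by parts cancels the tangential piece coming from $\dot\nu_\Sigma$. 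With your minus sign those two contributions have the \emph{same} sign and add, the Hessian term enters with the wrong sign on $ku^{k-1}\phi^2(\Delta_Mu-\Delta_\Sigma u)$, and with your version of the Euler--Lagrange relation, $H-h=ku^{-1}\nabla_\nu u$, the final identity you need, namely $\tfrac{k^2}{2}u^{k-2}\bangle{\nabla^M u,\nu}^2-khu^{k-1}\nabla^M_\nu u=\tfrac12(H^2-h^2)u^k$, fails. Everything does work if you fix the convention to $Q=Hu^k+\nabla^M_\nu(u^k)-hu^k$ with $ku^{-1}\nabla^M_\nu u=h-H$; you should state one convention and carry it through.

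The second, more substantive gap is the boundary term. You discard the first-variation boundary integral on the grounds that $\bangle{\nu_{\Sigma_t},\nu_{\partial\Sigma_t}}\equiv 0$, but that reasoning applies only to a variation field that is purely normal to $\Sigma_t$ along $\partial\Sigma_t$ for all $t$, which is generally incompatible with the admissibility constraint $\partial\Sigma_t\subset\partial_0M$. For admissible variations the boundary pairing is $\int_{\partial\Sigma_t}u^k\bangle{X_t,\nu_{\partial\Sigma_t}}$ with $X_t$ tangent to $\partial M$ along $\partial\Sigma_t$: it vanishes at $t=0$ by the orthogonality condition, but its $t$-derivative does not vanish in general; differentiating it produces the constrained-acceleration term $\bangle{\nabla_XX,\nu_{\partial\Sigma}}$ (a second-fundamental-form term of $\partial M$ evaluated on $\nu_\Sigma$) together with the rotation of the conormal, exactly as in the classical free boundary second variation formula. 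Your derivation accounts only for the boundary term $-\int_{\partial\Sigma}\bangle{\nabla^\Sigma\phi,\nu_{\partial\Sigma}}\phi u^k$ coming from the interior integration by parts; to prove the lemma you must additionally compute $\frac{d}{dt}\big|_{t=0}\int_{\partial\Sigma_t}u^k\bangle{X_t,\nu_{\partial\Sigma_t}}$ and show how its contribution is accounted for (or restrict to a class of variations for which it genuinely vanishes). As it stands, this step is asserted rather than proved, and it is precisely the step where the free boundary nature of the problem enters.
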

%%下面是第二变分的证明详细过程,修正后的正确版本
\begin{proof}
\begin{align*}
 &\frac{d^2}{dt^2} \big|_{t=0}\cA_{k}(\Omega_{t})\\
 &=\int_{\Sigma}\frac{\partial}{\partial t}\big(H_{\Sigma}u^k+\bangle{\nabla u^k,\nu_{\Sigma}}-hu^k)\phi+\int_{\partial\Sigma}\bangle{\nabla^{\Sigma}\phi,\nu_{\partial\Sigma}}\phi u^{k}-A_{\partial M}(\nu_{\Sigma},\nu_{\Sigma})\phi^2 u^{k} \\
  &=\int_{\Sigma}-\phi\Delta_{\Sigma}\phi u^k-(\Ric_{M}(\nu_{\Sigma},\nu_{\Sigma})+|\secondfund_{\Sigma}|^2)\phi^2u^k+ku^{k-1}H_{\Sigma}\bangle{\nabla u,\nu_{\Sigma}}\phi^2\\
  &\quad+k(k-1)u^{k-2}\phi^2\bangle{\nabla u,\nu_{\Sigma}}^2+ku^{k-1}\phi^2(\Delta_{M}u-\Delta_{\Sigma}u-H_{\Sigma}\bangle{\nabla u,\nu_{\Sigma}})\\
  &\quad-ku^{k-1}\phi\bangle{\nabla^{\Sigma}u,\nabla^{\Sigma}\phi} -\partial_{t}(hu^k)\phi+\int_{\partial\Sigma}\bangle{\nabla^{\Sigma}\phi,\nu_{\partial\Sigma}}\phi u^{k}-A_{\partial M}(\nu_{\Sigma},\nu_{\Sigma})\phi^2 u^{k}\\
 &=\int_{\Sigma}-\phi\Delta_{\Sigma}\phi u^k-(\Ric_{M}(\nu_{\Sigma},\nu_{\Sigma})+|\secondfund_{\Sigma}|^2)\phi^2u^k+k(k-1)u^{k-2}\phi^2\bangle{\nabla u,\nu_{\Sigma}}^2\\
 &\quad+ku^{k-1}\phi^2(\Delta_{M}u-\Delta_{\Sigma}u)-ku^{k-1}\phi\bangle{\nabla^{\Sigma}u,\nabla^{\Sigma}\phi} -\partial_{t}(hu^k)\phi\\
&\quad+\int_{\partial\Sigma}\bangle{\nabla^{\Sigma}\phi,\nu_{\partial\Sigma}}\phi u^{k}-A_{\partial M}(\nu_{\Sigma},\nu_{\Sigma})\phi^2 u^{k}.
\end{align*}
On the one hand, integrating by parts for the first term, we have
\begin{align*}
 -\int_{\Sigma}\phi\Delta_{\Sigma}\phi u^k
  &=-\int_{\partial\Sigma}\bangle{\nabla^{\Sigma}\phi,\nu_{\partial\Sigma}}\phi u^k+\int_{\Sigma}\bangle{\nabla^{\Sigma}\phi,\nabla^{\Sigma}(\phi u^k)}\\
  &=-\int_{\partial\Sigma}\bangle{\nabla^{\Sigma}\phi,\nu_{\partial\Sigma}}\phi u^k+\int_{\Sigma}|\nabla^{\Sigma}\phi|^2u^k+\int_{\Sigma}ku^{k-1}\bangle{\nabla^{\Sigma}\phi,\nabla^{\Sigma}u}\phi,
\end{align*}
plug it into above equality, we obtain that
\begin{align*}
&\frac{d^2}{dt^2} \big|_{t=0}\cA_{k}(\Omega_{t})\\
&=\int_{\Sigma}|\nabla^{\Sigma}\phi|^2u^k-(\Ric_{M}(\nu_{\Sigma},\nu_{\Sigma})+|\secondfund_{\Sigma}|^2)\phi^2u^k+k(k-1)u^{k-2}\phi^2\bangle{\nabla u,\nu_{\Sigma}}^2\\
 &\quad+ku^{k-1}\phi^2(\Delta_{M}u-\Delta_{\Sigma}u)-\partial_{t}(hu^k)\phi-\int_{\partial\Sigma}A_{\partial M}(\nu_{\Sigma},\nu_{\Sigma})\phi^2 u^k \\
\end{align*}
Since $\Sigma$ is the critical point of $\cA_{k}$, then
\begin{align*}
 \frac{1}{2}H_{\Sigma}^2 & =\frac{1}{2}(h-ku^{-1}\bangle{\nabla u,\nu_{\Sigma}})^2 \\
 &=\frac{1}{2}h^2+\frac{1}{2}k^2u^{-2}\bangle{\nabla u,\nu_{\Sigma}}^2-khu^{-1}\bangle{\nabla u,\nu_{\Sigma}}\\
 \partial_{t}(hu^k)&=\phi u^k\bangle{\nabla h,\nu}+k\phi u^{k-1}h\bangle{\nabla u,\nu_{\Sigma}}.
\end{align*}
Therefore, we get that
\begin{align*}
 \phi\partial_{t}(hu^k) &=\phi^2u^k\bangle{\nabla h,\nu_{\Sigma}}+k\phi^2u^{k-1}h\bangle{\nabla u,\nu_{\Sigma}} \\
 &=\phi^2u^k\bangle{\nabla h,\nu_{\Sigma}}+\phi^2u^k(\frac{1}{2}h^2+\frac{1}{2}k^2u^{-2}\bangle{\nabla u,\nu_{\Sigma}}^2-\frac{1}{2}H_{\Sigma}^2)
\end{align*}
Finally, the second variation formula is written as
\begin{align*}
 &\frac{d^2}{dt^2} \big|_{t=0}\cA_{k}(\Omega_{t})\\
% &=\int_{\Sigma}|\nabla^{\Sigma}\phi|^2u^k-(\Ric_{M}(\nu_{\Sigma},\nu_{\Sigma})+|\secondfund_{\Sigma}|^2)\phi^2u^k+k(k-1)u^{k-2}\phi^2\bangle{\nabla^{M}u,\nu_{\Sigma}}^2\\
%  &\quad+ku^{k-1}\phi^2(\Delta_{M}u-\Delta_{\Sigma}u)-\phi^2u^k\bangle{\nabla^{M}h,\nu_{\Sigma}}-\phi^2u^k(\frac{1}{2}h^2+\frac{1}{2}k^2u^{-2}\bangle{\nabla^{M}u,\nu_{\Sigma}}^2-\frac{1}{2}H_{\Sigma}^2)\\
%  &\quad-\int_{\partial\Sigma}A_{\partial M}(\nu_{\Sigma},\nu_{\Sigma})\phi^2 u^k \\
 &=\int_{\Sigma}|\nabla^{\Sigma}\phi|^2u^k-(\Ric_{M}(\nu_{\Sigma},\nu_{\Sigma})+|\secondfund_{\Sigma}|^2)\phi^2u^k+k(\frac{k}{2}-1)u^{k-2}\phi^2\bangle{\nabla u,\nu_{\Sigma}}^2\\
 &\quad+ku^{k-1}\phi^2(\Delta_{M}u-\Delta_{\Sigma}u)-\phi^2u^k\bangle{\nabla h,\nu_{\Sigma}}-\frac{1}{2}h^2\phi^2u^k+\frac{1}{2}H_{\Sigma}^2\phi^2u^k\\
 &\quad-\int_{\partial\Sigma}A_{\partial M}(\nu_{\Sigma},\nu_{\Sigma})\phi^2 u^k. \\
\end{align*}
\end{proof}
\subsection{3-dimensional diameter estimates}
Antonelli and Xu \cite{Antonelli-Xu} established a diameter estimate in spectral sense for closed manifolds. Inspired by their work, we obtain a diameter estimate for 3-dimensional compact manifolds with weakly convex boundary, as presented in the following theorem.
\begin{theo}\label{diamter theo}
  Let $M^3_{0}$ be a compact manifold with weakly convex boundary $\partial M_{0}$. Let $\omega$ be a smooth positive function satisfying $\bangle{\nabla^{M_{0}}\omega,\upsilon_{\partial M_{0}}}=0$ along $\partial M_{0}$, and $\upsilon_{\partial M_{0}}$ is the outward unit normal along $\partial M_{0}$. Assume that $\omega$ satisfies the following inequality:
  $$-\Delta_{M_{0}}\omega\geq\frac{4-k}{4}(\epsilon_{0}-\lambda_{Ric})\omega,$$
  where $k\in[1,2]$, then we have the diameter estimate: $\diam(M_{0})\leq\frac{2\pi}{\sqrt{2\epsilon_{0}}}$.
\end{theo}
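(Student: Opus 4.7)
I would argue by contradiction via a free-boundary warped $\mu$-bubble, following the strategy laid out in Section 4.1. Assume $\diam(M) > L_{0} := 2\pi/\sqrt{2\epsilon_{0}}$ and pick $p_{-}, p_{+}\in M$ with $d_{M}(p_{-},p_{+}) > L_{0}$. Shrinking small geodesic balls around $p_{\pm}$ into regions $\partial_{\pm}M \subset M$, and slightly perturbing $\partial M =: \partial_{0}M$ so that it meets $\partial_{\pm}M$ at angles in $(0,\pi/8)$, I get the decomposition $\partial M = \partial_{-}M\sqcup\partial_{0}M\sqcup\partial_{+}M$ required by the $\mu$-bubble framework.

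Next, set up the functional $\cA_{m}$ with warping $u = w^{\alpha}$ for a pair $(m,\alpha)$ to be tuned below, together with the prescribing function
\[
h(x) := -\sqrt{2\epsilon_{0}}\,\tan\!\Bigl(\tfrac{1}{2}\sqrt{2\epsilon_{0}}\bigl(d_{M}(p_{-},x) - L_{0}/2\bigr)\Bigr),
\]
smoothed so that $h\to\pm\infty$ near $\partial_{\mp}M$. By construction, $h$ satisfies the sharp ODE $2h' + h^{2} + 2\epsilon_{0} \le 0$ along minimizing curves from $p_{-}$. The existence proposition then produces a smooth minimizer $\Omega$ whose boundary $\Sigma^{2}$ is a compact surface meeting $\partial_{0}M$ orthogonally.

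The core of the argument is to test the second-variation lemma with $\phi \equiv 1$, eliminate $H_{\Sigma}^{2}$ using the first-variation equation $H_{\Sigma} = -mu^{-1}\langle\nabla^{M} u,\nu_{\Sigma}\rangle + h$, and apply the Gauss equation on $\Sigma^{2}\subset M^{3}$ to trade $\Ric_{M}(\nu_{\Sigma},\nu_{\Sigma}) + |\secondfund_{\Sigma}|^{2}$ against the intrinsic Gauss curvature $K_{\Sigma}$. Invoking Gauss-Bonnet then reduces the geodesic-curvature contribution along $\partial\Sigma$ to $A(\eta,\eta)$, since $\partial\Sigma \perp \partial_{0}M$. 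I would then choose $(m,\alpha)$ with $m\alpha = (4-k)/2$, so that the Robin hypothesis $\langle\nabla^{M} w, \upsilon_{\partial M}\rangle = \tfrac{4-k}{2}A(\eta,\eta)w$ cancels the $\int_{\partial\Sigma} A(\eta,\eta)u^{m}$ boundary integral identically; expanding $\Delta_{M}u$ in terms of $\Delta_{M}w$ and $|\nabla^{M}w|^{2}$, the PDE hypothesis $-\Delta_{M}w \ge \tfrac{4-k}{4}(\epsilon_{0} - \lambda_{\Ric})w$ then controls the bulk $\Delta_{M}u$ contribution, and the ODE $2h' + h^{2} + 2\epsilon_{0} \le 0$ absorbs the $h$-dependent terms. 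The surviving inequality takes the form $0 \le 2\pi\chi(\Sigma) - \epsilon_{0}\!\int_{\Sigma}u^{m} + (\text{nonpositive terms})$, and a component-by-component topological analysis of $\Sigma$, combined with the sharpness of the ODE over an interval of length exceeding $L_{0}$, produces the desired contradiction.

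The main obstacle is the dual matching needed when fixing $(m,\alpha)$: the value $m\alpha = (4-k)/2$ forced by cancellation of the Robin boundary term must be simultaneously compatible with the bulk control on $\Delta_{M}u$. Indeed, $\Delta_{M}u = \alpha w^{\alpha-1}\Delta_{M}w + \alpha(\alpha-1)w^{\alpha-2}|\nabla^{M}w|^{2}$, and since $\alpha = (4-k)/(2m)$ combined with $k\in[\tfrac{3}{2},2]$ can produce $\alpha(\alpha-1)$ of either sign, the Kato-type remainder must be absorbed into the term $m(m/2-1)u^{m-2}\langle\nabla^{M}u,\nu_{\Sigma}\rangle^{2}$ of the second variation. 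The specific constants $(4-k)/4$ and $(4-k)/2$ appearing in the hypotheses are calibrated precisely to make this work, and verifying this algebraic balance is the heart of the proof. The secondary technical points --- smoothing $h$ near $\partial_{\pm}M$ while preserving the sharp ODE, and perturbing the meeting angle of $\partial_{0}M$ with $\partial_{\pm}M$ without corrupting Gauss-Bonnet on $\Sigma$ --- are routine in this framework.
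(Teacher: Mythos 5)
Your overall skeleton (argue by contradiction, run a free boundary warped $\mu$-bubble between two regions $\Omega_{\pm}$, use a tangent/cotangent prescribing function with the sharp Riccati inequality $|\nabla h|<\tfrac12 h^2+\epsilon_0$, and handle boundary terms through the Robin condition and weak convexity) is the same as the paper's. The gap is in your core mechanism: trading $\Ric_{M}(\nu_{\Sigma},\nu_{\Sigma})+|\secondfund_{\Sigma}|^2$ for the Gauss curvature $K_{\Sigma}$ and invoking Gauss--Bonnet does not close under the stated hypotheses. The Gauss equation gives $\Ric_{M}(\nu_{\Sigma},\nu_{\Sigma})+|\secondfund_{\Sigma}|^2=\tfrac12 R_{M}-K_{\Sigma}+\tfrac12 H_{\Sigma}^2+\tfrac12|\secondfund_{\Sigma}|^2$, so after Gauss--Bonnet you need a lower bound on the ambient \emph{scalar} curvature $R_{M}$; but the theorem only feeds in the minimal Ricci curvature $\lambda_{Ric}$, with no sign restriction, through $-\Delta_{M}w\ge\tfrac{4-k}{4}(\epsilon_0-\lambda_{Ric})w$. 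Since $\tfrac12 R_{M}-\lambda_{Ric}\ge\tfrac12\lambda_{Ric}$ can be negative, the $\lambda_{Ric}$ term produced by $\Delta_{M}w$ can no longer be absorbed once $\Ric_{M}(\nu_{\Sigma},\nu_{\Sigma})$ has been converted into scalar curvature. Moreover your concluding inequality $0\le 2\pi\chi(\Sigma)-\epsilon_0\int_{\Sigma}u^{m}+(\text{nonpositive})$ is not a contradiction when $\chi(\Sigma)>0$ (disk or sphere components) without a lower bound on $\int_{\Sigma}u^{m}$, which is not available. Finally, the claimed exact cancellation forced by $m\alpha=(4-k)/2$ is not the right mechanism: the second variation produces the boundary term $-\mathrm{II}_{\partial M}(\nu_{\Sigma},\nu_{\Sigma})$, a different quantity from the datum $A(\eta,\eta)$ in the Robin condition, and both are used only through their signs (weak convexity), not through cancellation.

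For comparison, the paper's proof never uses the topology or intrinsic curvature of $\Sigma$: it takes the weight $w^{4/(4-k)}$ and tests the second variation with $\phi=w^{-4/(4-k)}$, keeps $\Ric_{M}(\nu_{\Sigma},\nu_{\Sigma})$ and pairs it directly with the hypothesis on $w$ via $\Ric_{M}(\nu_{\Sigma},\nu_{\Sigma})\ge\lambda_{Ric}$, uses $|\secondfund_{\Sigma}|^2\ge\tfrac12 H_{\Sigma}^2$, substitutes the first variation identity $H_{\Sigma}=h-\tfrac{4}{4-k}w^{-1}\bangle{\nabla^{M}w,\nu_{\Sigma}}$, and completes the square in $X=h$ and $Y=w^{-1}\bangle{\nabla^{M}w,\nu_{\Sigma}}$; the restriction $k\le2$ makes the leftover $\tfrac{4k-8}{(4-k)^2}Y^2$ nonpositive, so stability forces $0\le\int_{\Sigma}w^{-4/(4-k)}\bigl(-\tfrac12 h^2-\epsilon_0+|\nabla h|\bigr)$, contradicting the pointwise inequality satisfied by $h$. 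To salvage your route you would need an additional scalar curvature (or $\lambda_{Ric}\ge0$) assumption, which changes the statement; as stated, the Ricci pairing is essential.
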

\begin{proof}
We can use the same method from \cite{Antonelli-Xu}. Suppose by contradiction that the above diameter estimate does not hold, then there is a $\epsilon>0$ such that
\begin{align}\label{diam}
 \diam(M_{0})>\frac{2\pi}{\sqrt{2\epsilon_{0}}}\cdot({1+\epsilon})^2+2\epsilon.
\end{align}\label{diam}
Let us fix a point $p\in \partial M_{0}$ and take $\Omega_{+}:=B_{\epsilon}(p)$, and let $d:M_{0}\setminus\Omega_{+}\rightarrow\bR$ be a smoothing of $d(\cdot,\partial\Omega_{+})$ such that
$$d\big|_{\partial\Omega_{+}}=0,\quad\quad\big|\nabla d\big|\leq 1+\epsilon,\quad\quad d\geq\frac{d(\cdot,\partial\Omega_{+})}{1+\epsilon}.$$
We let
$$h(x):=\sqrt{2\epsilon_{0}}\cot(\frac{1}{1+\epsilon}\sqrt{\frac{\epsilon_{0}}{2}}~d(x)),$$
First, $h$ is a smooth function on $M_{0}$ and
\begin{align}\label{ine-h}
|\nabla h|<\frac{1}{2}h^2+\epsilon_{0},
\end{align}
%then $$|\nabla h|\leq|\nabla h|<\frac{h^2}{2}+2\lambda.$$
%Moreover, we notice that $$CD=\lambda\cdot(\frac{\sup (w)}{\inf(w)})^{\frac{6-2n}{n-1}\theta}.$$
%which satisfies inequality (\ref{ine-h}). 
Let $$\cO:=\{d>\frac{2(1+\epsilon)\pi}{\sqrt{2\epsilon_{0}}}\}\supset\{d(\cdot,p)>\epsilon+\frac{2(1+\epsilon)^2\pi}{\sqrt{2\epsilon_{0}}}\}\neq\emptyset.$$
Set $\Omega_{-}:=M_{0}\backslash\bar{\cO}$, then we have found two domains $\Omega_{+}\subset\subset \Omega_{-}\subset\subset M_{0}$ and $h(x)\in C^{\infty}(\Omega_{-}\backslash\overline{\Omega_{+}})$ which satisfies 
\begin{align}\label{eq1}
\lim\limits_{x\rightarrow\partial\Omega_{+}}h(x)=+\infty,\quad\quad \lim\limits_{x\rightarrow\partial\Omega_{-}}h(x)=-\infty.
\end{align}
For an arbitrary fixed domain $\Omega_{0}$ with $\Omega_{+}\subset\subset\Omega_{0}\subset\subset \Omega_{-}$, consider the functional 
$$\cA(\Omega_{t})=\int_{\partial^{*}\Omega_{t}}\omega^{\frac{4}{4-k}}d\cH^2-\int_{M_{0}}(\chi_{\Omega}-\chi_{\Omega_{0}})h\omega^{\frac{4}{4-k}}d\cH^3,$$
then there exists a $\Omega$ minimizing the functional $\cA(\Omega_{t})$. We assume that $\Sigma=\partial \Omega$. Computing its first variation, we have 
$$H_{\Sigma}=h-\frac{4}{4-k}\omega^{-1}\bangle{\nabla^{M_{0}}\omega,\nu_{\Sigma}}$$
Notably, we adapt another expression form different to Lemma \ref{sec-vara} (see also \cite{Antonelli-Xu}), the second variation can be written as 
\begin{align*}
  \frac{d^2}{dt^2}\big|_{t=0}\cA(\Omega_{t})
  &=\int_{\Sigma}[-\Delta_{\Sigma}\phi-|\secondfund_{\Sigma}|^2\phi-\Ric_{M_{0}}(\nu_{\Sigma},\nu_{\Sigma})\phi-\frac{4}{4-k} \omega^{-2}\bangle{\nabla^{M_{0}}\omega,\nu_{\Sigma}}^2\phi\\
  &\quad+\frac{4}{4-k} \omega^{-1}\phi(\Delta_{M_{0}}\omega-\Delta_{\Sigma}\omega-H_{\Sigma}\bangle{\nabla^{M_{0}}\omega,\nu_{\Sigma}}-\frac{4}{4-k} \omega^{-1}\bangle{\nabla^{\Sigma}\omega,\nabla^{\Sigma}\phi}\\
  &\quad-\phi\bangle{\nabla^{M_{0}}h,\upsilon_{\Sigma}}]\omega^{\frac{4}{4-k}}\phi+\int_{\partial\Sigma}\omega^{\frac{4}{4-k}}\phi\frac{\partial \phi}{\partial\nu_{\partial \Sigma}}-A_{\partial\Sigma}(\nu_{\Sigma},\upsilon_{\Sigma})\phi^2\omega^{\frac{4}{4-k}}.\\
\end{align*}
Let $\phi=\omega^{-\frac{4}{4-k}}$, then
\begin{align*}
  \int_{\Sigma}-\Delta_{\Sigma}\phi & =\frac{4}{4-k}\int_{\partial\Sigma}\omega^{-\frac{4}{4-k}-1}\bangle{\nabla^{\Sigma}\omega,\nu_{\partial\Sigma}}.
\end{align*}
On the one hand,
\begin{align*}
&-\frac{4}{4-k}\int_{\Sigma}\omega^{-1}\omega^{-\frac{4}{4-k}}\Delta_{\Sigma}\omega+\omega^{-1}\bangle{\nabla^{\Sigma}\omega,\nabla^{\Sigma}(\omega^{-\frac{4}{4-k}})}\\
%&=-\theta\int_{\Sigma}w^{-\theta-1}\Delta_{\Sigma}w+w^{-1}\bangle{\nabla^{\Sigma}w,\nabla^{\Sigma}(w^{-\theta})}+\bangle{\nabla^{\Sigma}w,\nabla^{\Sigma}(w^{-1})}w^{-\theta}\\
%&\quad-\bangle{\nabla^{\Sigma}w,\nabla^{\Sigma}(w^{-1})}w^{-\theta}\\
&=-\frac{4}{4-k}\int_{\Sigma}\omega^{-\frac{4}{4-k}-1}\Delta_{\Sigma}\omega+\bangle{\nabla^{\Sigma}\omega,\nabla^{\Sigma}(\omega^{-\frac{4}{4-k}-1})}+\frac{4}{4-k}\int_{\Sigma}\bangle{\nabla^{\Sigma}\omega,\nabla^{\Sigma}(\omega^{-1})}\omega^{-\frac{4}{4-k}}\\
&=-\frac{4}{4-k}\int_{\partial\Sigma}\bangle{\nabla^{\Sigma}\omega,\nu_{\partial\Sigma}}\omega^{-\frac{4}{4-k}-1}-\frac{4}{4-k}\int_{\Sigma}|\nabla^{\Sigma}\omega|^2\omega^{-2-\frac{4}{4-k}}\\
&\leq-\frac{4}{4-k}\int_{\partial\Sigma}\bangle{\nabla^{\Sigma}\omega,\nu_{\partial\Sigma}}\omega^{-\frac{4}{4-k}-1}.\\
\end{align*}
 We note that $$|\secondfund_{\Sigma}|^2\geq\frac{1}{2}H_{\Sigma}^2=\frac{1}{2}(h-\frac{4}{4-k} \omega^{-1}\bangle{\nabla^{M_{0}}\omega,\nu_{\Sigma}})^2,$$ 
  and $$\bangle{\nabla^{\Sigma}\omega,\nu_{\partial\Sigma}}=\bangle{\nabla^{M_{0}}\omega,\upsilon_{\partial\Sigma}}=0\quad on \quad\partial \Sigma, $$ %,\enspace A_{\partial M}\geq 0\enspace \text{on $\partial\Sigma$}
  %where $A_{\partial M}$ is the second fundamental form of $\partial M$ with respect to $\eta$, which is guaranteed by the weakly convexity of $\partial M$.
Combining these consequences with non-negativity of the second variation, we obtain 
\begin{align*}
    0 &\leq\int_{\Sigma}-|\secondfund_{\Sigma}|^2\omega^{-\frac{4}{4-k}}-\Ric_{M_{0}}(\nu_{\Sigma},\upsilon_{\Sigma})\omega^{-\frac{4}{4-k}}-\frac{4}{4-k} \omega^{-2}\bangle{\nabla^{M_{0}}\omega,\nu_{\Sigma}}^2\\
  &\quad+\frac{4}{4-k} \omega^{-1-\frac{4}{4-k}}(\Delta_{M_{0}}\omega-H_{\Sigma}\bangle{\nabla^{M_{0}}\omega,\nu_{\Sigma}})-\bangle{\nabla^{M_{0}}h,\nu_{\Sigma}}\omega^{-\frac{4}{4-k}}\\
  &\leq\int_{\Sigma} \omega^{-\frac{4}{4-k}}\big[-\frac{1}{2}H_{\Sigma}^2-\epsilon_{0}-\frac{4}{4-k}(\omega^{-1}\bangle{\nabla^{M_{0}}\omega,\nu_{\Sigma}})^2-\frac{4}{4-k} H_{\Sigma}(\omega^{-1}\bangle{\nabla^{M_{0}}\omega,\nu_{\Sigma}})+|\nabla h|\big].
\end{align*}
Setting $Y=\omega^{-1}\bangle{\nabla^{M_{0}}\omega,\nu_{\Sigma}}$, then $H_{\Sigma}=h-\frac{4}{4-k}Y$, we have
\begin{align*}
0&\leq\int_{\Sigma}\omega^{-\frac{4}{4-k}}[-\frac{1}{2}h^2+\frac{4}{(4-k)}hY-\frac{8}{(4-k)^2}Y^2-\epsilon_{0}-\frac{4}{4-k} Y^2\\
&\quad-\frac{4}{4-k}(h-\frac{4}{4-k} Y)Y+|\nabla h|]\\
&\leq\int_{\Sigma}\omega^{-\frac{4}{4-k}}[-\frac{1}{2}h^2-\epsilon_{0}+|\nabla h|+\frac{4k-8}{(4-k)^2}Y^2]\\
&\leq\int_{\Sigma}w^{-\frac{4}{4-k}}(-\frac{1}{2}h^2-\epsilon_{0}+|\nabla h|).
\end{align*}
%If we set $\alpha=\frac{2\theta}{n-1}$, for $0\leq \theta\leq \frac{n-1}{n-2}$. 
But $h$ satisfies $$\frac{1}{2}h^2+\epsilon_{0}-|\nabla h|>0,$$ this is a contradiction.
\end{proof}
\section{rigidity of free boundary minimal hypersurface}
The diameter estimates established in Section 4 will be used to obtain the almost linear volume growth of an end. Finally, we complete the proof of Theorem \ref{main-theo1.5}. 

\begin{theo}\label{mu bubble estimate}
Let $(N^5,\partial N)$ be a complete Riemannian manifold that has bounded geometry and weakly convex boundary. Assume that $(M^4,\partial M)\hookrightarrow (N^5, \partial N)$ is a complete two-sided, stable properly immersed free boundary minimal hypersurface. Suppose $\lambda_{\k-triRic_{N}}\geq 2\epsilon_{0}$, for a positive constant $\epsilon_{0}$ and some $k\in[1,2]$. Let $\Gamma$ be a component of $\overline{M\setminus K}$ for a smooth compact set $K$, with $\partial \Gamma=\partial_{0}\Gamma\cup\partial_{1}\Gamma$, $\partial_{0}  \Gamma\subset\partial M$, $\partial_{1}\Gamma\subset \partial K$. If there is $p\in \Gamma$ with $d_{\Gamma}(p,\partial_{1}\Gamma)>10\pi(:=\frac{L}{2})$, then there exists a Caccioppoli set $\Omega\subset B_{10\pi}(\partial_{1}\Gamma)$ whose reduced boundary is smooth, so that any component of $\overline{\partial\Omega\setminus\partial \Gamma}$ will have diameter at most $\frac{2\pi}{\sqrt{2\epsilon_{0}}}$ and intersects with $\partial_{0}\Gamma$ orthogonally. 
% \begin{theo}\label{mu bubble estimate}
% Let $(N^5,\partial N)$ be a complete manifold with bounded geometry and weakly convex boundary, and assume that $(M^4,\partial M)\hookrightarrow (N^5, \partial N)$ is a complete stable properly immersed free boundary minimal hypersurface. Suppose $$\lambda_{k}+\frac{2k^2+k-2}{8k-6}H_{M}^2\geq 2\epsilon_{0},$$ for a positive constant $\epsilon_{0}$ and some $k\in[\frac{3}{2},2]$, where $\lambda_{k}$ is the infimum of the $\k-triRic$ curvature. Let $\Gamma$ be a component of $\overline{M\setminus K}$ for some compact set $K$, with $\partial \Gamma=\partial_{0}\Gamma\cup\partial_{1}\Gamma$, $\partial_{0}  \Gamma\subset\partial M$. There are universal constants $L>0$ such that if there is $p\in \Gamma$ with $d_{\Gamma}(p,\partial_{1}\Gamma)>10\pi(:=\frac{L}{2})$, then we can find a Caccioppoli set $\Omega\subset B_{10\pi}(\partial_{1}\Gamma)$ whose reduced boundary is smooth, so that any component $\Sigma$ of $\overline{\partial\Omega\setminus\partial \Gamma}$ will have diameter at most $\frac{2\pi}{\sqrt{2\epsilon_{0}}}$ and intersects with $\partial_{0}\Gamma$ orthogonally. 
\end{theo}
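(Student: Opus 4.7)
The plan is to construct a balanced free boundary $\mu$-bubble inside the $10\pi$-neighborhood of $\partial_1\Gamma$ and then apply the $3$-dimensional diameter estimate (Theorem \ref{diamter theo}) to every component of its reduced boundary.

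First, fix a smoothing $\rho$ of $d_\Gamma(\cdot,\partial_1\Gamma)$ and set $\Omega_+=\{\rho<\delta\}$, $\Omega_-=\{\rho<10\pi-\delta\}$; the hypothesis $d_\Gamma(p,\partial_1\Gamma)>10\pi$ ensures that $\Omega_-\setminus\overline{\Omega_+}\Subset\Gamma$ is non-empty. On this annular region I would take the prescribing function
\[
h(x)=\sqrt{2\epsilon_0}\,\cot\!\Bigl(\tfrac{\sqrt{2\epsilon_0}}{2(1+\delta)}\rho(x)\Bigr),
\]
which blows up with the correct signs on $\partial\Omega_\pm$ and satisfies the pointwise bound $|\nabla h|<\tfrac12 h^2+\epsilon_0$ already exploited in the proof of Theorem \ref{diamter theo}. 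For the warp I use a positive first eigenfunction $u$ of the Jacobi-type stability operator associated to the FBCMC immersion $M\hookrightarrow N$, mirroring the closed construction of \cite{Hong-cmc nonexis}; in particular $u$ encodes $|\secondfund_M|^2+\Ric_N(\eta,\eta)$ and satisfies a Robin relation on $\partial M$ involving $A(\eta,\eta)$. For a fixed $k\in[3/2,2]$ I minimize
\[
\cA_k(\Omega)=\int_{\partial^*\Omega}u^k\,d\cH^3-\int_\Gamma(\chi_\Omega-\chi_{\Omega_0})hu^k\,d\cH^4
\]
over Caccioppoli sets with $\Omega\Delta\Omega_0\Subset\mathring{\Gamma}\cup\partial_0\Gamma$. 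By the existence and regularity results recalled in Section 4.1, a smooth minimizer $\Omega$ exists whose boundary meets $\partial_0\Gamma$ orthogonally; the blow-up of $h$ on $\partial\Omega_\pm$ confines $\Omega\subset B_{10\pi}(\partial_1\Gamma)$.

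Let $\Sigma$ be any component of $\overline{\partial\Omega\setminus\partial\Gamma}$. Using the first variation identity $H_\Sigma=h-ku^{-1}\langle\nabla^M u,\nu_\Sigma\rangle$, the second variation of $\cA_k$ applied to the test function $\phi=u^{-k/2}$, the Gauss equation for $\Sigma\hookrightarrow M$, the weak stability of $M$ (which controls $|\secondfund_M|^2+\Ric_N(\eta,\eta)$ via $u$), and the $k$-weighted triRicci hypothesis, I expect to show that $w:=u^{k/2}|_\Sigma$ is a positive smooth function on the compact $3$-manifold $\Sigma$ satisfying precisely the interior differential inequality and the Robin-type boundary relation demanded in the hypothesis of Theorem \ref{diamter theo}. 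That theorem then yields $\diam(\Sigma)\leq 2\pi/\sqrt{2\epsilon_0}$, which is the required bound.

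The main obstacle is the derivation of the hypothesis of Theorem \ref{diamter theo} on $\Sigma$. One must absorb, by Cauchy--Schwarz and quadratic completion, the cross terms coupling $H_\Sigma$, $u^{-1}\langle\nabla^M u,\nu_\Sigma\rangle$, and the FBCMC mean curvature $H_M$ that appear in the second variation. The precise coefficient $\tfrac{2k^2+k-2}{8k-6}$ in the ambient curvature assumption, together with the restricted range $k\in[3/2,2]$, is tailored exactly so that the leftover quadratic has a favorable sign and the inequality closes, just as in the closed computation of \cite{Hong-cmc nonexis}. The boundary contributions are non-negative thanks to the weakly convex boundary of $N$ and the free boundary orthogonality, so they do not spoil the argument.
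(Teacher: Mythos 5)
Your overall strategy coincides with the paper's: warp the $\mu$-bubble functional by $u^{k}$, where $u>0$ is the Fischer--Colbrie--Schoen-type solution of $\Delta_{\Gamma}u+(|\secondfund|^{2}+\Ric_{N}(\eta,\eta))u=0$ with the Robin condition $\nabla_{\nu}u=A(\eta,\eta)u$ on $\partial_{0}\Gamma$ coming from free-boundary stability; prescribe $h$ as a rescaled $\cot/\tan$ of a smoothed distance to $\partial_{1}\Gamma$ so that $|\nabla h|<\tfrac12 h^{2}+\epsilon_{0}$; minimize $\cA_{k}$; and then process the second variation of $\cA_{k}$ with the $k$-weighted triRicci hypothesis (via the computation quoted from Hong--Yan, Lemma 4.6) so as to invoke the $3$-dimensional diameter estimate of Theorem \ref{diamter theo} on each component $\Sigma$. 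The localization $\Omega\subset B_{10\pi}(\partial_{1}\Gamma)$ and the orthogonal intersection with $\partial_{0}\Gamma$ indeed follow from the blow-up of $h$ and the free-boundary regularity, as you indicate.

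There is, however, a concrete gap in the final step: the claim that $w:=u^{k/2}|_{\Sigma}$ itself verifies the hypotheses of Theorem \ref{diamter theo}. Testing the second variation with $\phi=u^{-k/2}\psi$ only produces the integral inequality $\tfrac{4}{4-k}\int_{\Sigma}|\nabla^{\Sigma}\psi|^{2}\geq\int_{\Sigma}(\epsilon_{0}-\lambda_{Ric})\psi^{2}+\text{(boundary terms)}$ for all admissible $\psi$, i.e.\ nonnegativity of a Robin eigenvalue on $\Sigma$; it gives no pointwise control of $\Delta_{\Sigma}(u^{k/2})$, which involves $\mathrm{Hess}_{M}u(\nu_{\Sigma},\nu_{\Sigma})$ and $H_{\Sigma}\bangle{\nabla^{M}u,\nu_{\Sigma}}$ and is only tamed inside the integral identity after integration by parts on $\Sigma$. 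Moreover the boundary relation fails for your choice: along $\partial\Sigma$ one has $\bangle{\nabla^{\Sigma}u,\nu_{\partial\Sigma}}=A(\eta,\eta)u$, hence $\bangle{\nabla^{\Sigma}(u^{k/2}),\nu_{\partial\Sigma}}=\tfrac{k}{2}A(\eta,\eta)u^{k/2}$, whereas Theorem \ref{diamter theo} demands the coefficient $\tfrac{4-k}{2}$; these agree only when $k=2$, so for $k\in[\tfrac32,2)$ the function $u^{k/2}|_{\Sigma}$ is not admissible. The repair is exactly the paper's route: from the quadratic-form inequality conclude that the first eigenvalue of $-\Delta_{\Sigma}-\tfrac{4-k}{4}(\epsilon_{0}-\lambda_{Ric})$ with Robin coefficient $\tfrac{4-k}{2}A(\eta,\eta)$ is nonnegative, let $w$ be the associated positive first eigenfunction (a new function on $\Sigma$, not $u^{k/2}$), and then apply Theorem \ref{diamter theo} to obtain $\diam(\Sigma)\leq\tfrac{2\pi}{\sqrt{2\epsilon_{0}}}$. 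With that substitution your argument closes and is the same as the paper's.
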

\begin{proof}
Since $\Gamma$ is a component of $\overline{M\setminus K}$ for some smooth compact set $K$, which guarantee that $\partial_{0}\Gamma$  intersects with $\partial_{1}\Gamma$ at angles no more than $\frac{\pi}{8}$. We denote $\nu$ by the outward unit normal of $\partial \Gamma\hookrightarrow \Gamma$(the same for $\partial N\hookrightarrow N$ because the free boundary condition). We denote $\secondfund$ and $A$ by the second fundamental form of $\Gamma\hookrightarrow N$ with respect to $\eta$ and $\partial N\hookrightarrow N$ with respect $\nu$ respectively. 
% For any normal variation  $F_{t}$ of $(\Gamma,\partial\Gamma)$ compactly supported away from $\partial_{1}\Gamma$, written as $\frac{d}{dt}\big|_{t=0}F_{t}=f\eta$, with $\eta$ the outward unit normal of $\Gamma\hookrightarrow N$, we have by the  second variation formula for stable free boundary CMC hypersurfaces:
By stability inequality,
$$0\leq\int_{\Gamma}|\nabla f|^2-(|\secondfund|^2+\Ric_{N}(\eta,\eta))f^2-\int_{\partial_{0}\Gamma}A_{\partial N}(\eta,\eta)f^2.$$
Integrating by parts, we have
$$0\leq\int_{\Gamma}-(f\Delta f+|\secondfund|^2f^2+\Ric_{N}(\eta,\eta)f^2)+\int_{\partial_{0}\Gamma}f(\nabla_{\nu}f-A_{\partial N}(\eta,\eta)f).$$
We denote the first eigenvalue of Jacobi operator as:
$$\lambda_{1}(\Gamma)=\min\limits_{S}\frac{\int_{\Gamma}-fJf}{\int_{\Gamma}f^2},$$
where $J=\Delta_{\Gamma}+|\secondfund|^2+\Ric_{N}(\eta,\eta)$ and $S$ is defined by $$\quad S=\{f\neq 0\quad in \quad \Gamma: f|_{\partial_{1}\Gamma}=0 \quad and \quad\nabla^{\Gamma}_{\nu}f-A_{\partial N}(\eta,\eta)f=0\quad on\quad \partial_{0}\Gamma\}.$$ 
% and each test function $f$ is taken to be compactly supported and $\lambda_{1}(\Gamma)$ is well-defined by domain monotonicity property for compact sets ($\lambda_{1}(B_{1})>\lambda_{1}(B_{2})$ if $\overline{B_{1}}\subset B_{2})$. 
Following the method from Fischer-Colbrie and Schoen \cite{FCS80} (c.f. free boundary case in \cite[Theorem 6.4]{Wuyujie2023}), we solve a positive function $u\in C^{2}(\Gamma)$ that satisfies  
\begin{equation}
\begin{cases}
    \Delta u+(|\secondfund|^2+\Ric_{N}(\eta,\eta))u=0,\quad in \quad\Gamma,\\
    
     \nabla_{\nu}u-A_{\partial N}(\eta,\eta)u=0,\quad on \quad \partial_{0}\Gamma,\\
     u=1,\quad on \quad\partial_{1}\Gamma.
\end{cases}
\end{equation}
%Next we apply the free boundary $\mu$-bubble to the above $u$ and a proper $h$.\\

Denote by $\rho_{0}$ a mollification of $d(\cdot,\partial_{1}\Gamma)$ with $|\nabla \rho_{0}|<2$, we may assume that $\rho_{0}(x)=0$ for all $x\in\partial_{1}\Gamma$. Choose $\epsilon\in(0,\frac{1}{2})$ and that $\epsilon$, $\frac{2\pi}{\sqrt{\beta_{k}\epsilon_{0}}}+2\epsilon_{0}$ are regular values of $\rho_{0}$, where $\beta_{k}$ is a constant to be determined later. We define $\rho$ by $$\rho=\frac{\rho_{0}-\epsilon}{\frac{2}{\sqrt{\beta_{k}\epsilon_{0}}}+\frac{\epsilon}{\pi}}-\frac{\pi}{2}.$$
Let $\Omega_{1}=\{x\in \Gamma: -\frac{\pi}{2}<\rho<\frac{\pi}{2}\}$, and $\Omega_{0}=\{x\in\Gamma:-\frac{\pi}{2}<\rho\leq 0\}$. Clearly $|Lip(\rho)|\leq\sqrt{\beta_{k}\epsilon_{0}}$ and we define
$$h=-\sqrt{\frac{\beta_{k}}{\epsilon_{0}}\tan(\rho)}.$$
We compute that $$\nabla h=-\sqrt{\frac{\epsilon_{0}}{\beta_{k}}}(1+\tan^{2}(\rho))\nabla\rho=-\sqrt{\frac{\epsilon_{0}}{\beta_{k}}}(1+\frac{\beta_{k}h^2}{\epsilon_{0}})\nabla \rho,$$

so we have $$|\nabla h|\leq\epsilon_{0}(1+\frac{\beta_{k}h^2}{\epsilon_{0}}).$$

  Next, we apply $\mu$-bubble method to above $u$ and $h$  in Section 4. Then by the non-negativity of the second variation,  
  
  \begin{align*}
  \frac{d^2}{dt^2}\big|_{t=0}\cA_{k}(\Omega_{t})&=\int_{\Sigma}\big[-\Delta_{\Sigma}\phi-(|\secondfund_{\Sigma}|^2+\Ric_{M}(\nu_{\Sigma},\nu_{\Sigma}))\phi-ku^{-2}\bangle{\nabla u,\nu_{\Sigma}}^2\phi\\
  &\quad+ku^{-1}(\Delta u-\Delta_{\Sigma}u-H_{\Sigma}\bangle{\nabla u,\nu_{\Sigma}})-ku^{-1}\bangle{\nabla^{\Sigma}u,\nabla^{\Sigma}\phi}\\
  &\quad-\bangle{\nabla h,\nu_{\Sigma}}\big]u^{k}\phi+\int_{\partial\Sigma}\frac{\partial\phi}{\partial\nu_{\partial\Sigma}}u^{k}\phi-A_{\partial M}(\nu_{\Sigma},\nu_{\Sigma})u^{k}\phi.
  \end{align*}
  
integrating by parts, we get
  \begin{align*}
      -\int_{\Sigma}\phi\Delta_{\Sigma}\phi u^{k}&=-(\int_{\partial\Sigma}\frac{\partial\phi}{\partial\nu_{\partial\Sigma}}\phi u^{k}-\int_{\Sigma}\bangle{\nabla^{\Sigma}\phi,\nabla^{\Sigma}(u^{k}\phi)})\\
      &=-(\int_{\partial\Sigma}\frac{\partial\phi}{\partial\nu_{\partial\Sigma}}\phi u^{k}-\int_{\Sigma}|\nabla^{\Sigma}\phi|^2u^{k}-\int_{\Sigma}ku^{k-1}\phi\bangle{\nabla^{\Sigma}\phi,\nabla^{\Sigma}u}).
  \end{align*}
  Combined with the fact that $H_{\Sigma}=h-ku^{-1}\bangle{\nabla u,\nu_{\Sigma}}$, 
  \begin{align*}
      ku^{k-1}\phi^2\cdot(-H_{\Sigma}\bangle{\nabla u,\nu_{\Sigma}})
      =&k^2 u^{k-2}\phi^2\bangle{\nabla u,\nu_{\Sigma}}^2-ku^{k-1}\phi^2 h\bangle{\nabla u,\nu_{\Sigma}},
  \end{align*}
  so we have 
  \begin{align*}
      \frac{d^2}{dt^2}\big|_{t=0}\cA_{k}(\Omega_{t})&=\int_{\Sigma}|\nabla^{\Sigma}\phi|^2u^{k}-(|\secondfund_{\Sigma}|^2+\Ric_{M}(\nu_{\Sigma},\nu_{\Sigma}))\phi^2u^{k}\\
      &\quad+\int_{\Sigma}k(k-1)u^{k-2}(\nabla_{\nu_{\Sigma}}u)^2\phi^2+ku^{k-1}(\Delta u-\Delta_{\Sigma}u)\phi^2\\
      &\quad-\nabla_{\nu_{\Sigma}}(u^{k}h)\phi^{2}-\int_{\partial\Sigma}A_{\partial M}(\nu_{\Sigma},\nu_{\Sigma})\phi^2 u^{k}.
  \end{align*}
  Because the second variation of $\cA_{k}(\Omega)$ is non-negative, combined with the fact that boundary $\partial\Sigma$ is weakly convex, we obtain
  \begin{align*}
\int_{\Sigma}u^{k}|\nabla^{\Sigma}\phi|^2-ku^{k-1}\phi^2\Delta_{\Sigma}u&\geq\int_{\Sigma}(|\secondfund_{\Sigma}|^2+\Ric_{M}(\nu_{\Sigma},\nu_{\Sigma})-ku^{-1}\Delta u)u^{k}\phi^2\\
&\quad+\int_{\Sigma}\nabla_{\nu_{\Sigma}}(u^{k}h)\phi^2-k(k-1)u^{k-2}(\nabla_{\nu_{\Sigma}}u)^{2}\phi^2\\
&\quad+\int_{\partial\Sigma}A_{\partial M}(\nu_{\Sigma},\nu_{\Sigma})u^{k}\phi^2.
  \end{align*}
Hence, choose $\phi=u^{-\frac{k}{2}}\psi$, the terms on left-hand side can be written as
\begin{align*}
  \int_{\Sigma}|\nabla^{\Sigma}\phi|^2u^{k}-ku^{k-1}\phi^2\Delta_{\Sigma}u &=\int_{\Sigma}|\nabla^{\Sigma}\psi|^2+\frac{k^2}{4}|\nabla^{\Sigma}\log u|^2\psi^2-k\psi\bangle{\nabla^{\Sigma}\log u,\nabla^{\Sigma}\psi} \\
   &\quad+k\int_{\Sigma}\bangle{\nabla^{\Sigma}u,\nabla^{\Sigma}(\psi^2u^{-1})}-k\int_{\partial\Sigma}\psi^2u^{-1}\bangle{\nabla^{\Sigma}u,\nu_{\partial\Sigma}}.
\end{align*}
%Therefore,(接下来的化简过程和洪寒他们都有，所以我们就省略不再赘述了)
%\begin{align*}
%   &\int_{\Sigma}|\nabla^{\Sigma}\psi|^2+\frac{k^2}{4}|\nabla^{\Sigma}\log u|^2\psi^2-k\psi\bangle{\nabla^{\Sigma}\log u,\nabla^{\Sigma}\psi}+k\int_{\Sigma}\bangle{\nabla^{\Sigma}u,\nabla^{\Sigma}(\psi^2u^{-1})}\\
%   &\quad\geq k\int_{\partial\Sigma}\psi^2u^{-1}\bangle{\nabla^{\Sigma}u,\nu_{\partial\Sigma}}+\int_{\Sigma}(|\secondfund_{\Sigma}|^2+\Ric_{M}(\nu,\nu)-ku^{-1}\Delta_{M}u)\psi^2\\
%   &\quad+\int_{\Sigma}\nabla_{\nu}(u^{k}h)u^{-k}\psi^2-k(k-1)u^{k-2}(\nabla_{\nu}u)^2u^{-k}\psi^2+\int_{\partial\Sigma}A(\nu,\nu)\psi^2.
% \end{align*}

% However, on the other hand, the left-hand side can be estimated by using Young inequality, so
% \begin{align*}
%   &\int_{\Sigma}|\nabla^{\Sigma}\psi|^2+\frac{k^2}{4}|\nabla^{\Sigma}\log u|^2\psi^2-k\psi\bangle{\nabla^{\Sigma}\log u,\nabla^{\Sigma}\psi}+k\int_{\Sigma}\bangle{\nabla^{\Sigma}u,\nabla^{\Sigma}(\psi^2u^{-1})}\\
%   &\leq\int_{\Sigma}|\nabla^{\Sigma}\psi|^2+\frac{k^2}{4}|\nabla^{\Sigma}\log u|^2\psi^2+k|\psi||\nabla^{\Sigma}\log u||\nabla^{\Sigma}\psi|-ku^{-2}|\nabla^{\Sigma}u|^2\psi^2\\
%   &\leq\int_{\Sigma}\big[k(\frac{\epsilon}{2}-1)+\frac{k^2}{4}\big]u^{-2}|\nabla^{\Sigma}u|^2\psi^2+(1+\frac{1}{2\epsilon})\int_{\Sigma}|\nabla^{\Sigma}\psi|^2.
% \end{align*}
% We can choose $\epsilon=2-\frac{k}{2}$, such that the first term on the right-hand side vanishes. In addition to $k\in[\frac{3}{2},2]$, so we obtain
By the same computations as in \cite[Lemma 4.7]{Hong-cmc nonexis}, 
\begin{align*}
&\int_{\Sigma}|\nabla^{\Sigma}\psi|^2+\frac{k^2}{4}|\nabla^{\Sigma}\log u|^2\psi^2-k\psi\bangle{\nabla^{\Sigma}\log u,\nabla^{\Sigma}\psi}+k\int_{\Sigma}\bangle{\nabla^{\Sigma}u,\nabla^{\Sigma}(\psi^2u^{-1})}\\
%&\leq(1+\frac{1}{4-k})\int_{\Sigma}|\nabla^{\Sigma}\psi|^2\\
&\leq\frac{4}{4-k}\int_{\Sigma}|\nabla^{\Sigma}\psi|^2.
\end{align*}
% Because $\Sigma$ is compact, we 
% Choose $e_{1}$ such that $\lambda_{Ric}=\Ric_{\Sigma}(e_1,e_1)$, then
% \begin{align*}
%   \frac{4}{4-k}\int_{\Sigma}|\nabla^{\Sigma}\psi|^2 & \geq\int_{\Sigma}(|\secondfund_{\Sigma}|^2+\Ric_{M}(\nu,\nu)-ku^{-1}\Delta_{M}u)\psi^2\\
%   &+\int_{\Sigma}kh(\nabla_{\nu}\log u)\psi^2+\nabla_{\nu}h\psi^2-k(k-1)(\nabla_{\nu}\log u)^2\psi^2\\
% &+\int_{\partial\Sigma}kA(\eta,\eta)\psi^2+A(\nu,\nu)\psi^2.
% \end{align*}
Substituting $h=H_{\Sigma}+k\nabla_{\nu_{\Sigma}}\log u$, since the boundary is weakly convex, then
\begin{align*}
\frac{4}{4-k}\int_{\Sigma}|\nabla^{\Sigma}\psi|^2&\geq\int_{\Sigma}(|\secondfund_{\Sigma}|^2+\Ric_{M}(\nu_{\Sigma},\nu_{\Sigma})-ku^{-1}\Delta u)\psi^2\\
&\quad+\int_{\Sigma}kH_{\Sigma}(\nabla_{\nu_{\Sigma}}\log u)\psi^2+\nabla_{\nu_{\Sigma}}h\psi^2+k(\nabla_{\nu_{\Sigma}}\log u)^2\psi^2.
\end{align*}
 We choose a local orthonormal frame $\{e_{i}\}^{5}_{i=1}$ on $N$, which satisfies that $e_{5}=\eta$ and $e_{4}=\nu_{\Sigma}$. By the Gauss equation,%%（下面的计算博士论文给加上）
  \begin{align*}
      \Ric_{\Sigma}(e_{1},e_{1})%&=\sum\limits^{3}_{i=1}R^{\Sigma}_{1ii1}\\
     &=\sum\limits^{3}_{i=1}(R_{i11i}+\secondfund_{\Sigma}(e_{i},e_{i})\secondfund_{\Sigma}(e_{1},e_{1})-(\secondfund_{\Sigma}(e_{i},e_{1}))^2)\\
     %&=\sum\limits^{3}_{i=1}R^{M}_{i11i}+H_{\Sigma}A^{\Sigma}_{11}-\sum\limits^{3}_{i=1}(A^{\Sigma}_{i1})^2\\
     &=\sum\limits^{3}_{i=1}\overline{R}_{i11i}-\secondfund^2(e_{1},e_{1})-\sum\limits^{3}_{i=1}(\secondfund(e_{i},e_{1}))^2+H_{\Sigma}\secondfund_{\Sigma}(e_{1},e_{1})-\sum\limits^{3}_{i=1}(\secondfund_{\Sigma}(e_{i},e_{1}))^2,
  \end{align*}
  and
  \begin{align*}
       \Ric_{M}(\nu_{\Sigma},\nu_{\Sigma})%&=\sum\limits^{4}_{i=1}(R^{N}_{i44i}+A^{M}_{44}A^{M}_{ii}-A^{2}_{i4})\\
     &=\sum\limits^{4}_{i=1}\overline{R}_{i44i}+H \secondfund(e_{4},e_{4})-\sum\limits^{4}_{i=1}(\secondfund(e_{i},e_{4}))^2.
  \end{align*}
  
% Next, we will estimate the term $$-ku^{-1}\Delta_{M}u+|A_{\Sigma}|^2+\Ric_{M}(\nu,\nu).$$
We note that
\begin{align*}
    &-ku^{-1}\Delta u+|\secondfund_{\Sigma}|^2+\Ric_{M}(\nu_{\Sigma},\nu_{\Sigma})+\Ric_{\Sigma}(e_{1},e_{1})\\
   &=k|\secondfund|^2+k\Ric_{N}(\eta,\eta)+\Ric_{M}(\nu_{\Sigma},\nu_{\Sigma})+|\secondfund_{\Sigma}|^2+\Ric_{\Sigma}(e_{1},e_{1})\\
   &=k\sum\limits^{5}_{i=1}\overline{R}_{i55i}+\sum\limits^{4}_{i=1}\overline{R}_{i44i}+\sum\limits^{3}_{i=1}\overline{R}_{i11i}\\
   &\quad+k|\secondfund|^2-\secondfund(e_{4},e_{4})\secondfund(e_{1},e_{1})-\sum\limits^{3}_{i=1}(\secondfund(e_{i},e_{1}))^2-\sum\limits^{4}_{i=1}(\secondfund(e_{i},e_{4}))^2\\
&\quad+|\secondfund_{\Sigma}|^2+H_{\Sigma}\secondfund_{\Sigma}(e_{1},e_{1})-\sum\limits^{3}_{i=1}(\secondfund_{\Sigma}(e_{i},e_{1}))^2\\
   &\quad=\lambda_{\k-triRic_{N}}+|\secondfund_{\Sigma}|^2+H_{\Sigma}\secondfund_{\Sigma}(e_{1},e_{1})-\sum\limits^{3}_{i=1}(\secondfund_{\Sigma}(e_{i},e_{1}))^2.
\end{align*}
This is because when $k\in[1,2]$, the following terms are non-negative:
\begin{align*}
   & k|\secondfund|^2-\secondfund(e_{4},e_{4})\secondfund(e_{1},e_{1})-\sum\limits^{3}_{i=1}(\secondfund(e_{i},e_{1}))^2-\sum\limits^{4}_{i=1}(\secondfund(e_{i},e_{4}))^2\\
 &=k|\secondfund|^2+(\secondfund(e_{2},e_{2})+\secondfund(e_{3},e_{3}))\secondfund(e_{1},e_{1})-\sum\limits^{3}_{i=2}(\secondfund(e_{i},e_{1}))^2-\sum\limits^{4}_{i=1}(\secondfund(e_{i},e_{4}))^2\\
 &\geq k|\secondfund|^2-\secondfund^2(e_{1},e_{1})-\frac{1}{4}(\secondfund(e_{2},e_{2})+\secondfund(e_{3},e_{3}))^2-\sum\limits^{3}_{i=2}(\secondfund(e_{i},e_{1}))^2-\sum\limits^{4}_{i=1}(\secondfund(e_{i},e_{4}))^2\\
 &\geq k|\secondfund|^2-\secondfund^2(e_{1},e_{1})-\frac{1}{2}(\secondfund^2(e_{2},e_{2})+\secondfund^2(e_{3},e_{3}))-\sum\limits^{3}_{i=2}(\secondfund(e_{i},e_{1}))^2-\sum\limits^{4}_{i=1}(\secondfund(e_{i},e_{4}))^2\\
 &\geq 0.
 \end{align*}
 Hence, the stability inequality becomes
 \begin{align*}
     \frac{4}{4-k}\int_{\Sigma}|\nabla^{\Sigma}\psi|^2&\geq\int_{\Sigma}(\lambda_{\k-triRic_{N}}-\Ric_{\Sigma}(e_{1},e_{1}))\psi^2
     +(|\secondfund_{\Sigma}|^2+H_{\Sigma}\secondfund_{\Sigma}(e_{1},e_{1})-\sum\limits^{3}_{i=1}(\secondfund_{\Sigma}(e_{i},e_{1}))^2)\psi^2\\
     &\quad+\int_{\Sigma}k H_{\Sigma}(\nabla_{\nu_{\Sigma}}\log u)\psi^2+\nabla_{\nu_{\Sigma}}h\psi^2+k(\nabla_{\nu_{\Sigma}}\log u)^2\psi^2.
 \end{align*}
 Now we estimate the following quantity
 $$I:=|\secondfund_{\Sigma}|^2+H_{\Sigma}\secondfund_{\Sigma}(e_{1},e_{1})-\sum\limits^{3}_{i=1}(\secondfund_{\Sigma}(e_{i},e_{1}))^2+k H_{\Sigma}(\nabla_{\nu_{\Sigma}}\log u)+\nabla_{\nu_{\Sigma}}h+k(\nabla_{\nu_{\Sigma}}\log u)^2,$$
 and we expect that $I\geq\beta_{k} h^2$ for some $\beta_{k}>0$. Following the method of Mazet \cite{Mazet-6Bernstein}, it is equivalent to show that the matrix 
 \begin{equation*}
 G=
 \begin{pmatrix}
   \frac{1}{k}-\frac{4}{9} & \frac{1}{6}\sqrt{\frac{2}{3}} & \frac{1}{2}-\frac{1}{k}\\
    \frac{1}{6}\sqrt{\frac{2}{3}} & \frac{1}{3} & 0\\
    \frac{1}{2}-\frac{1}{k}& 0& \frac{1}{k}-\beta_{k}\\
 \end{pmatrix}
 \end{equation*}
 is positive semi-definite for $k\in[1,2]$ and some $\beta_{k}$. In fact, such $\beta_{k}$ always exists ($\beta_{k}=\frac{1}{2}$). Therefore, we obtain that
 $$\frac{4}{4-k}\int_{\Sigma}|\nabla^{\Sigma}\psi|^2\geq\int_{\Sigma}(2\epsilon_{0}+\beta_{k} h^2+\nabla_{\nu}h-\Ric_{\Sigma}(e_{1},e_{1})).$$
 According to previous construction of $h$, we have $$|\nabla h|\leq \epsilon_{0}(1+\frac{\beta_{k} h^2}{\epsilon_{0}}).$$
 We choose $e_{1}$ realizing the minimum of Ricci curvature $\lambda_{\Ric}(\Sigma)=\Ric_{\Sigma}(e_{1},e_{1})$, hence we have 
 $$\frac{4}{4-k}\int_{\Sigma}|\nabla^{\Sigma}\psi|^2\geq \int_{\Sigma}(-\lambda_{\Ric}(\Sigma)+\epsilon_{0})\psi^2.$$
 Therefore, there is a positive function $\omega\in C^{\infty}(\Sigma)$ satisfies
 \begin{equation*}
 \begin{cases}
  \frac{4}{4-k}\Delta_{\Sigma} \omega\leq \lambda_{\Ric}\omega-\epsilon_{0}\omega, \enspace in \enspace \Sigma,\\
    \frac{\partial \omega}{\partial \nu_{\partial\Sigma}}=0, \enspace\enspace\enspace\enspace\enspace\enspace\enspace\enspace\enspace\enspace\enspace\enspace on \enspace\partial \Sigma,
  \end{cases}
 \end{equation*}
 where $\frac{4}{4-k}\leq 2$,  $k\in[1,2]$. By Theorem \ref{diamter theo}, we have $\diam(\Sigma)\leq\frac{2\pi}{\sqrt{2\epsilon_{0}}}$.
 % Therefore there exists a Jacobi function $w$ satisfying  $\bangle{\nabla^{\Sigma}w,\nu_{\partial\Sigma}}=\frac{4-k}{2}A(\eta,\eta)w$ along $\partial\Sigma$, which solves $$\Delta_{\Sigma}w+\frac{4-k}{4}(\epsilon_{0}-\lambda_{Ric}(\Sigma))w=0$$ in $\Sigma$. Combined with the previous Theorem \ref{diamter theo}, we can obtain the desired diameter estimate.
 \end{proof}
%\subsection{almost linear volume growth}
Next, we will obtain the almost linear growth of an end.

\begin{lemm}\label{lemma-linear}
Let $(N^5,\partial N)$ be a complete 5-dimensional Riemannian manifold with weakly bounded geometry and weakly convex boundary. Assume that  $(M^4,\partial M)\hookrightarrow(N^5,\partial N)$ is a simply connected, complete two-sided  stable properly immersed free boundary minimal hypersurface. Suppose $\Ric^{N}_{3}\geq 0$ and $\lambda_{\k-triRic_{N}}\geq 2\epsilon_{0}$ for a positive constant $\epsilon_{0}$ and some $k\in[1,2]$. Let $(E_{j})_{j\in \bN}$ be an end of $M$ given by $E_{j}=M\setminus B_{jL}$ for some fixed point $p\in M$ and let $M_{j}:=E_{j}\cap\overline{B_{(j+1)L}(p)}$, where $L=20\pi$. % is the constant from previous Theorem \ref{mu bubble estimate}. 
Then there is a universal constant $C_{6}=C_{6}(N,L)>0$ and $j_{0}$, such that for $j\geq j_{0}$, $$\Vol(M_{j})\leq C_{6}.$$
\end{lemm}
% \begin{lemm}\label{lemma-linear}
% Let $(N^5,\partial N)$ be a complete 5-dimensional Riemannian manifold with bounded geometry and weakly convex boundary. Assume that  $(M^4,\partial M)\hookrightarrow(N^5,\partial N)$ is a simply connected stable properly immersed FBCMC hypersurface. Suppose $\biRic_{N}\geq0$ and $$\lambda_{k}+\frac{2k^2+k-2}{8k-6}H_{M}^2\geq 2\epsilon_{0}$$ for a positive constant $\epsilon_{0}$ and some $k\in[\frac{3}{2},2]$, where $\lambda_{k}$ is the infimum of the $\k-triRic$ curvature of $N$. Let $\{E_{l}\}_{l\in N}$ be an end of $M$ adapted to $p\in M$ with length scale $L$, where $L$ is the constant from previous Theorem \ref{mu bubble estimate}. Let $M_{l}:=E_{l}\cap B_{(l+1)L}(p)$. Then there is a universal constant $C_{0}>0$ such that $$\Vol_{M}(M_{l})<C_{0}$$ for all $l$ sufficiently large.
% \end{lemm}
\begin{proof}
    There is a large enough $j_{0}$ so that for all $j\geq j_{0}$, $M_{j}$ is connected. We can perturb the boundary of each $E_{j}$ such that it intersects with $\partial M$ with an interior $\theta\in(0,\frac{\pi}{8})$. Then we can apply Theorem \ref{mu bubble estimate} to $E_{j}\hookrightarrow N$, There exists a set $\Omega_{j} \subset B_{\frac{L}{2}}(\partial E_{j})$ whose boundary has uniformly bounded diameter. In particularly, there is some component $\Sigma_{j}$ of $\partial\Omega_{j}$ that separates $\partial E_{j}$ and $\partial E_{j+1}$, then Theorem \ref{mu bubble estimate} implies that $\diam(\Sigma_{j})\leq \frac{2\pi}{\sqrt{2\epsilon_{0}}}$. Let $z_1, z_2$ be any two points in $M_j$. Since there exist minimizing geodesics from $p$ to each $z_i$ that intersect $\Sigma_l$ at points $y_i$ respectively, the arcs between $y_i$ and $z_i$ have length at most $2L$. Moreover, we have $\diam(\Sigma_l) \leq \frac{2\pi}{\sqrt{2\epsilon_0}}$. Connecting these paths through $y_1$ and $y_2$ on $\Sigma_l$, we find that $d(z_1, z_2) \leq 4L + \frac{2\pi}{\sqrt{2\epsilon_0}}$, which implies $\diam(M_j) \leq 4L + \frac{2\pi}{\sqrt{2\epsilon_0}}$.
    
By Lemma \ref{lem16}, in conjunction with the curvature estimates from Lemma \ref{lem15}, there exists a constant $C_6 = C(N, L, c)$ such that $\Vol(B_{4L+c}(p)) \leq C_6$ for all $p \in M$, where $c = \frac{2\pi}{\sqrt{2\epsilon_0}}$. Since $\diam(M_j) \leq 4L + c$, $M_j$ is contained within a ball of the form $B_{4L+c}(p)$ for any $p \in M_j$. Consequently, $\Vol(M_j) \leq C_6$ as desired.

\end{proof}
%Building upon these established results, we decompose $M$ and adapt the method presented in \cite[Theorem 7.1]{Wuyujie2023}. This approach enables us to derive a rigidity theorem for 5-dimensional manifolds, which essentially extends the results of \cite{Hong-cmc nonexis} to the free boundary setting.
We are now in a position to conclude the proof of Theorem \ref{main-theo1.5}.
\begin{theo}
Let $(N^{5},\partial N)$ be a complete Riemannian manifold with weakly bounded geometry and weakly convex boundary, $\lambda_{\Ric_{N}}\geq\delta_{0}$, $\biRic_{N}\geq 0$, $\lambda_{\k-triRic_{N}}\geq 2\epsilon_{0}$. Then any complete stable two-sided, properly immersed free boundary minimal hypersurface $(M^4,\partial M)\hookrightarrow(N^5,\partial N)$ is totally geodesic, $\Ric_{N}(\eta,\eta)=0$ along $M$ and $A_{\partial N}(\eta,\eta)=0$ along $\partial M$.
\end{theo}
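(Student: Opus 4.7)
The plan is to reduce to the compact case by appealing to the non-existence Theorem \ref{thm 4} with $H_M=0$, and then to extract the rigidity from the free boundary stability inequality tested against the constant function. The ``decomposition of $M$'' referenced just above then amounts to separating any noncompact ends---which will be ruled out by the $\mu$-bubble / diameter / almost-linear-growth machinery of Section~4---from the compact part, on which the stability inequality is essentially trivial to exploit.

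First, a stable two-sided minimal immersion is, a fortiori, a weakly stable free boundary CMC immersion with $H_M=0$, since the mean-zero constraint in the definition of weak stability only shrinks the class of admissible test functions. With $H_M=0$, the curvature hypotheses $\frac{H_M^2}{4}+\inf_N \Ric_N\geq\delta$ and $\inf_N \k-triRic_N+\frac{2k^2+k-2}{8k-6}H_M^2\geq 2\epsilon_0$ of Theorem \ref{thm 4} specialize exactly to $\inf_N \Ric_N\geq\delta$ and $\inf_N \k-triRic_N\geq 2\epsilon_0$, which are the hypotheses in force here, while the remaining assumption $\biRic_N\geq 0$ is what one needs to apply Lemma \ref{lemma-linear} inside that proof. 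Therefore Theorem \ref{thm 4} forbids $M$ from being noncompact, and $M$ must be compact.

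With $M$ compact, two-sided, stable and minimal, the unconstrained stability inequality
\[
\int_M |\nabla f|^2 \geq \int_M \bigl(|\secondfund|^2 + \Ric_N(\eta,\eta)\bigr) f^2 + \int_{\partial M} A(\eta,\eta) f^2
\]
holds for every compactly supported $f$. Taking $f\equiv 1$, which is legitimate because $M$ is compact and no mean-zero constraint is imposed, yields
\[
0 \geq \int_M \bigl(|\secondfund|^2 + \Ric_N(\eta,\eta)\bigr) + \int_{\partial M} A(\eta,\eta).
\]
Each integrand on the right is pointwise non-negative: $|\secondfund|^2\geq 0$ trivially; $\Ric_N(\eta,\eta)\geq \delta\geq 0$ by the ambient Ricci lower bound; and $A(\eta,\eta)\geq 0$ because the free boundary condition forces $\eta\in T\partial N$ along $\partial M$ while $\partial N$ is weakly convex. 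Hence each integrand must vanish identically, yielding $\secondfund\equiv 0$, $\Ric_N(\eta,\eta)\equiv 0$ along $M$, and $A(\eta,\eta)\equiv 0$ along $\partial M$, as claimed.

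All of the real analytic difficulty sits inside Theorem \ref{thm 4}, whose proof required the warped free boundary $\mu$-bubble construction, the 3-dimensional diameter estimate Theorem \ref{diamter theo}, and the almost-linear volume growth Lemma \ref{lemma-linear}. Once Theorem \ref{thm 4} is available to invoke, the rigidity reduces to the single second-variation computation above, and the only point needing care is the bookkeeping check that the curvature hypotheses here translate precisely into those of Theorem \ref{thm 4} when $H_M=0$. A more self-contained alternative would be to avoid quoting Theorem \ref{thm 4} and instead work on $M$ directly via an exhaustion by cutoff functions $\varphi_R$ in place of $f\equiv 1$, in the spirit of Wu's argument \cite{Wuyujie2023}; the main obstacle in that approach would be to prove $\int_M |\nabla\varphi_R|^2\to 0$ along each noncompact end, using the volume control provided by Lemmas \ref{lemma-linear} and \ref{lem14}.
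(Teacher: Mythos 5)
Your argument is sound in outline, and it is in fact more explicit than the paper's own treatment: the paper offers no written proof of this theorem, only the remark that one should decompose $M$ and adapt the method of \cite[Theorem 7.1]{Wuyujie2023}, which is closer to the cutoff-over-ends ``self-contained alternative'' you sketch at the end than to your main route. Your shortcut---invoke Theorem \ref{thm 4} with $H_M=0$ to force $M$ to be compact, then test the unconstrained stability inequality with $f\equiv 1$ and use the free boundary condition ($\eta$ tangent to $\partial N$) together with weak convexity to see that every term is nonnegative---is correct as a deduction, and your observation that the hypothesis $\biRic_N\geq 0$ is exactly what feeds Lemma \ref{lemma-linear} inside the proof of Theorem \ref{thm 4} correctly repairs an omission in that theorem's statement.

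Two caveats, both inherited from the paper's own imprecision, deserve to be flagged. First, Theorem \ref{thm 4} is stated for $N$ with \emph{bounded} geometry, for \emph{properly} immersed hypersurfaces, and with $\delta,\epsilon_0>0$, whereas the rigidity theorem as stated assumes only \emph{weakly} bounded geometry, does not mention properness, and does not explicitly require $\delta>0$ (the introduction does). So your reduction literally imports stronger hypotheses than those in force; either one assumes them (as the paper implicitly does, since its own ingredients, Theorem \ref{mu bubble estimate} and Lemma \ref{lemma-linear}, are also stated under bounded geometry), or one must rerun the end analysis under weakly bounded geometry. Second, with $\delta>0$ your compact-case computation forces $\Ric_N(\eta,\eta)\equiv 0$, which is incompatible with $\Ric_N\geq\delta>0$; so under the stated hypotheses no such $M$ exists at all and the theorem is proved vacuously. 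That is logically harmless, but it signals that the intended hypothesis is likely $\Ric_N\geq 0$ as in \cite{Hong-cmc nonexis}, in which case Theorem \ref{thm 4} no longer excludes noncompact $M$ and the actual proof must be the exhaustion argument over ends using the diameter estimate of Theorem \ref{diamter theo} and the almost linear volume growth of Lemma \ref{lemma-linear}, i.e.\ precisely the alternative you mention but do not carry out.
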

\begin{proof}
    Without loss of generality, we assume that $M$ is simply connected by lifting to its universal cover. Based on Theorem \ref{at most-one end}, $M$ has at most one non-parabolic end $(E_{j})_{j\in \bN}$. %we can apply Lemma \ref{lemma-linear} to obtain $M_{j}$ and $j_{0}$, $L$,$c$ following the notation in  Lemma \ref{lemma-linear}.
    We fix $p\in M$, denote $B_{p}(j_{0}L)$ by $B_{j_{0}L}$ and decompose $M$ as follows.
\begin{align*}
M &= \overline{B_{j_{0}L}} \cup E_{j_{0}} \cup (M \setminus (B_{j_{0}L} \cup E_{j_{0}})) \\
&=: \overline{B_{j_{0}L}} \cup E_{j_{0}} \cup P_{j_{0}}.
\end{align*}
Proceeding inductively for $i \geq 1$, we have:
\begin{align*}
E_{j_{0}} &= M_{j_{0}} \cup P_{j_{0}+1} \cup E_{j_{0}+1} \\
&= M_{j_{0}} \cup P_{j_{0}+1} \cup (M_{j_{0}+1} \cup P_{j_{0}+2} \cup E_{j_{0}+2}) \\
&= \left( \bigcup_{j=j_{0}}^{j_{0}+i-1} M_j \right) \cup \left( \bigcup_{j=j_{0}+1}^{j_{0}+i} P_j \right) \cup E_{j_{0}+i}.
\end{align*}
Here, for each $j > j_0$, the set $P_j$ is defined by $P_j = E_j \setminus (E_{j+1} \cup B_{(j+1)L})$. Moreover, every connected component of $P_j$ (for $j \geq j_0$) is parabolic.

        % Following the setting above, fix $x\in M$, $i\geq 1$ and obtain $j_{0}$, $L$, $c$, $E_{j}$, $M_{j}$, $P_{j}$.
%For each $j\geq j_{0}$, $P_{j}$ is made of disjoint parabolic components.

 We apply Lemma \ref{para-lem} to each of these parabolic components and obtain a compactly supported function $u_{j}$ on each $P_{j}$, with $\int_{P_{j}}|\nabla u_{j}|^2<\frac{1}{i^2}$ and satisfies the boundary condition $u_{j}|_{\partial P_{j}\setminus\partial M}=1$, $\nabla_{\nu}u|_{\partial M\cap P_{j}}=0$.

We let $\rho$ be a mollification of the distance function to $p$, with $|\nabla \rho|\leq 2$ and $$\rho|_{\partial E_{j}}=jL,~\rho|_{\partial M_{k}\setminus\partial E_{j}}=(j+1)L.$$
Following the method of Wu \cite{Wuyujie2023}, for fixed $i$, we can also construct compactly supported Lipschitz function $f_{i}$ satisfying 
\begin{equation*}
    f_{i}(x)=
    \begin{cases}
    1, \quad\quad\quad\quad\quad\quad\quad\quad x\in \overline {B_{j_{0}L}},\\
    \frac{(k_{0}+i)L-\rho(x)}{iL}, \quad\quad\quad\quad x\in \overline{M_{k}},\quad k_{0}\leq k\leq k_{0}+i-1,\\
    \frac{(k_{0}+i-k)L}{iL}u_{k},\quad\quad\quad\quad x\in \overline{P_{k}},\\
    0, \quad\quad\quad\quad\quad\quad\quad\quad\quad x\in E_{k_{0}+i}.
    \end{cases}
\end{equation*}
% Consider $\phi(x)=\frac{(j_{0}+i)L-x}{iL}$, then we can define a compactly supported Lipschitz function $f_{i}$ as follows. When $x\in\overline{M_{j}}$ for some $j_{0}\leq j\leq j_{0}+i-1$, then $f_{i}(x)=\phi(\rho(x))$, and when $x\in \overline{P_{j}}$ for some $j_{0}\leq j\leq j_{0}+i$ we define $f(x)=\phi(jL)u_{j}$. One can check that this definition agrees on the intersection, and we can define $f(x)=1$ when $x\in\overline{B_{j_{0}L}}$, and $f(x)=0$ when $x\in E_{j_{0}+i}$. 
Now we take $f_{i}$ into the stability inequality, 
\begin{align*}
    \int_{M}(\Ric_{N}(\eta,\eta)+|\secondfund|^2)f_{i}^2&\leq\int_{M}|\nabla f_{i}|^2-\int_{\partial M}A_{\partial N}(\eta,\eta)f_{i}^2\\
    %&\leq\sum\limits^{j_{0}+i-1}_{j=j_{0}}\int_{M_{j}}\phi^{'}(\rho)^2|\nabla \rho|^2+\sum\limits^{j_{0}+i}_{j=j_{0}}\phi^{2}(jL)\int_{P_{j}}|\nabla u_{j}|^2\\
    &\leq \frac{4iC_{6}}{i^2L^2}+\frac{i+1}{i^2}\leq\frac{C^{'}}{i}\to 0\quad as\quad i\to\infty.
\end{align*}
Since $f_i \to 1$ as $i\to\infty$, we conclude that $\Ric_{N}(\eta,\eta) = 0$, $\secondfund = 0$ everywhere on $M$, and $A_{\partial N}(\eta,\eta) = 0$ along $\partial M$.
\end{proof}

\bibliographystyle{plain}

\end{document}